\documentclass[11pt, a4paper, oneside]{amsart}
\usepackage{latexsym,amssymb,amsmath} 
\usepackage{cite} 
\usepackage{path}
\usepackage{url}
\usepackage{verbatim}
\usepackage[pdftex]{graphicx}
\usepackage[utf8]{inputenc}
\usepackage[T1]{fontenc}
\usepackage{multirow}
\usepackage{float}
\usepackage{soul}

\usepackage{color}
\usepackage{hyperref}
\setlength{\oddsidemargin}{0.0in}
\setlength{\textwidth}{6in}
\setlength{\topmargin}{0.0in}
\setlength{\headheight}{0.0pt}
\setlength{\headsep}{13pt}
\setlength{\textheight}{700pt}
\setlength{\voffset}{-0.25in}
\setlength{\footskip}{35pt}

\makeatletter
\setlength{\arraycolsep}{2\p@} 
\makeatother

\newtheorem{remark}{Remark}
\newtheorem{algorithm}{Algorithm}
\newtheorem{theorem}{Theorem}
\newtheorem{definition}{Definition}

\newtheorem{lemma}{Lemma}

\numberwithin{equation}{section}
\numberwithin{table}{section}
\numberwithin{figure}{section}



\newcommand{\iS}{\mathcal{S}}
\newcommand{\iA}{\mathcal{A}}
\newcommand{\iI}{\mathcal{I}}
\newcommand{\iJ}{\mathcal{J}}
\newcommand{\iL}{\mathcal{L}}
\newcommand{\iK}{\mathcal{K}}
\newcommand{\iV}{V_{\text{ad}}}

\newcommand{\iG}{\mathcal{G}}
\def\lag{\langle}
\def\rag{\rangle}

\newcommand{\bu}{\mathbf{u}}
\newcommand{\bp}{\mathbf{p}}
\newcommand{\bl}{\boldsymbol{\lambda}}
\newcommand{\IN}{\mathbb{N}}

\newcommand{\RR}{\mathbb{R}}

\newcommand{\IR}{\mathbb{R}}

\newcommand{\Div}{\mathrm{Div}}

\newcommand{\II}{\mathbf{I}}

\newcommand{\yy}{\mathbf{y}}

\newcommand{\junk}[1]{{}}
\newlength{\fwtwo} \setlength{\fwtwo}{0.45\textwidth}

\begin{document}
\DeclareGraphicsExtensions{.jpg}
\renewcommand*{\thefootnote}{\fnsymbol{footnote}}
\renewcommand*{\thefootnote}{$\dag$}

\author{C. Chung$^\dag$, J.C. De los Reyes$^\dag$ and C.B. Sch\"onlieb$^\ddag$}
\address{$^\dag$Research Center on Mathematical Modelling (MODEMAT), EPN Quito, Quito, Ecuador}
\email{cao.vanchung@epn.edu.ec}
\email{juan.delosreyes@epn.edu.ec}
\address{$^\ddag$Department of Applied Mathematics and Theoretical Physics, University of Cambridge, Cambridge, United Kingdom.}
\thanks{$^*$This research has been supported by SENESCYT through Prometeo program and MATH-AmSud project SOCDE ``Sparse Optimal Control of Differential Equations''. CBS acknowledges support from the {EPSRC} grant Nr.~EP/M00483X/1 and from the Leverhulme grant `Breaking the non-convexity barrier'}
\email{cbs31@cam.ac.uk}

\title[Optimal spatially-dependent regularization]{Learning optimal spatially-dependent regularization parameters in total variation image denoising$^*$}

\keywords{Optimization-based learning in imaging, bilevel optimization, PDE-constrained optimization, semismooth Newton method, Schwarz domain decomposition mehod.}

\subjclass[2010]{47N40, 65D18, 65N06, 68W10, 65M55.}

\maketitle

\begin{abstract}
We consider a bilevel optimization approach in function space for the choice of spatially dependent regularization parameters in TV image denoising models. First- and second-order optimality conditions for the bilevel problem are studied when the spatially-dependent parameter belongs to the Sobolev space $H^1(\Omega)$. A combined Schwarz domain decomposition-semismooth Newton method is proposed for the solution of the full optimality system and local superlinear convergence of the semismooth Newton method is verified. Exhaustive numerical computations are finally carried out to show the suitability of the approach.
\end{abstract}

\section{Introduction}

The idea of Total Variation (TV) regularization for removing the noise in a given noisy image $f$ consists in reconstructing a denoised version $u$ of it by minimizing the generic functional
\begin{equation}\label{TVfunc}
\mathcal F (u) = |Du|(\Omega)+\int_\Omega\lambda \, \phi(u, f) \, dx
\end{equation}
where
$$
|Du|(\Omega) = \underset{v\in C^\infty_0(\Omega, \IR^2),\|v\|\leq 1}{\sup}\int_\Omega u \nabla \cdot v \, dx
$$
is the total variation (TV) of $u$ in $\Omega$, $\lambda$ is a positive parameter function and $\phi$ is a suitable fidelity function, dependent on the type of noise included in $f$. The parameter $\lambda$ can be either a positive constant or a spatially dependent function $\lambda:\Omega\rightarrow \RR^+$. If $\lambda \in \RR^+$, the parameter serves as a homogeneous weight between the fidelity measure and the TV-regularizing term. On the other hand, if $\lambda$ is considered as spatially dependent, i.e., $\lambda:\Omega\rightarrow \RR^+$, it can also reflect information on possibly heterogenous noise in the image, as well as making a difference between regularization of small and large scale features in the image. Hence, $\lambda$ has a key role in spatially balancing the amount of regularization. Spatially dependent parameters have been considered in the recent papers \cite{bredies2013spatially,dong2011automated,KFrick_Marnizt2012,lauzier2012non,lauzier2012noise}.

The choice of an appropriate regularization parameter $\lambda$ is a difficult task and has been the subject of many research efforts (see, e.g., \cite{dong2011automated, KFrick_Marnizt2012, KFrick_Marnizt2012_2, KFrick_Marnizt2013, Gilboa_et_el_2006, DStrong_et_el_2006,Vogel_C_R, tadmor2004multiscale}). In \cite{Juan_Carlos_Carolina}, a bilevel optimization approach in function space was proposed for learning the weights in \eqref{TVfunc}. In the flavour of supervised machine learning, the approach presupposes the existence of a training set of clean and noisy images. Existence of Lagrange multipliers was proved and an optimality system characterizing the solution was obtained. The analytical results hold both for $\lambda\in \RR^+$ and $\lambda:\Omega\rightarrow\RR^+$, while a solution algorithm was only designed for solving the bilevel optimization problem with $\lambda\in \RR^+$. A related approach for finite-dimensional variational problems was proposed in \cite{kunischpock}.

In Figure \ref{fig:motiv1} the influence of the choice of a constant $\lambda$ in \eqref{TVfunc} is shown, over-regularising the reconstructed image if chosen too small and under-regularising if chosen too large. Moreover, in Figure \ref{fig:motiv2} the reconstructed images with constant and spatially-dependent $\lambda$ are shown, where $\lambda$ has been optimized with the bilevel approach for \eqref{TVfunc} proposed in \cite{Juan_Carlos_Carolina}.

\begin{figure}
\includegraphics[width=5.5cm]{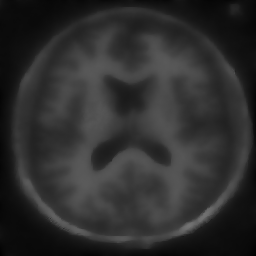} \hspace{1.5cm} \includegraphics[width=5.5cm]{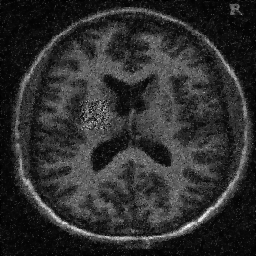}
\caption{TV denoised images that have been computed as minimizers of \eqref{TVfunc} with different choices for $\lambda\in \RR^+$. While choosing $\lambda$ too small is over-regularizing the image, choosing it too large is under-regularizing, the question is what the best choice of $\lambda$ is and how to compute it.}
\label{fig:motiv1}
\end{figure}

\begin{figure}
\includegraphics[width=5.5cm]{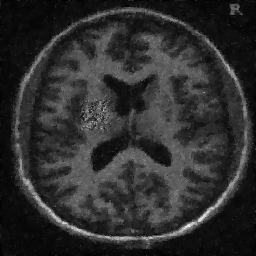} \hspace{1.5cm} \includegraphics[width=5.5cm]{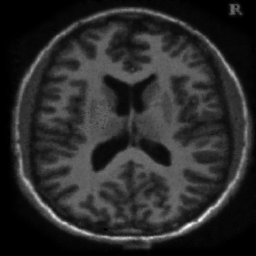}
\caption{TV denoised images, computed as minimizers of \eqref{TVfunc}, with a $\lambda$ that has been optimised with the bilevel approach in \cite{Juan_Carlos_Carolina}. On the left the optimally computed $\lambda\in \RR^+$ is constant, on the right the $\lambda$ is spatially dependent, computed with the approach proposed and analysed in this paper.}
\label{fig:motiv2}
\end{figure}

In this article we consider the bilevel optimization approach for \eqref{TVfunc} from \cite{Juan_Carlos_Carolina}, with a spatially dependent parameter $\lambda \in H^1(\Omega)$ and $\phi(\cdot)=(\cdot)^2$  as presented in Section \ref{secimport}, and investigate first- and second-order optimality conditions for the bilevel problem. In addition to the nonsmooth lower level denoising problems, a positivity constraint on the functional parameter ($\lambda \geq 0$ a.e. in $\Omega$) has to be imposed to guarantee well-posedness. These elements lead to a nonlinear and nonsmooth first-order optimality system with complementarity relations.

For proving second order sufficient optimality conditions, we improve previous G\^ateaux differentiability results of the solution mapping\cite{Juan_Carlos_Carolina} and show that it is actually twice Fr\'echet differentiable under suitable assumptions. We then define a cone of critical directions and prove the result by utilizing a contradiction argument.

Since the resulting optimality system involves several coupled PDEs (twice the size of the training set), the efficient numerical solution of the problem becomes challenging. We consider a combined Schwarz domain decomposition-semismooth Newton approach, where the domain $\Omega$ is subdivided into overlapping subdomains $\Omega_i$ with optimized transmission conditions (see, e.g., \cite{Quarteroni_Valli, Grande2006, E_Okyere}). We apply Schwarz domain decomposition methods directly to the nonlinear optimality system rather than to a linearization of it, and solve, in each subdomain, a reduced nonlinear and nonsmooth optimality system. We propose a semismooth Newton algorithm for the solution of each subdomain system and analyze the local superlinear convergence of the method.

The outline of the paper is as follows. In Section 2 the bilevel optimization problem is stated and analyzed. The analysis involves differentiability properties of the solution operator and the derivation of first and second order optimality conditions. The numerical treatment of the problem is considered in Section 3. The discretization of the problem is described and the domain decomposition and semismooth Newton algorithms are presented. Also the convergence analysis of the semismooth Newton method is carried out. Finally, in Section 4 an exhaustive numerical experimentation is presented. We compare our approach with other spatially-dependent approaches and apply it to problems with large training sets.

\section{The bilevel optimization problem in function space} \label{secimport}
Bilevel optimization encompasses a general class of constrained optimization problems in which the constraint constitutes an optimization problem itself, which is called the lower level problem. The idea of employing bilevel optimization for learning variational image processing approaches arises as minimizing a quality measure for the solution of the variational approach with respect to free parameters in the model. That is, we consider the problem
\begin{align*}
& \min_\lambda ~ C(u(\lambda))\\
& \textrm{s.t. } u(\lambda) \in \mathrm{argmin}_u \mathcal J(u,\lambda),
\end{align*}
where $\lambda$ encodes the free parameters and $C$ is a quality measure for a minimizer of the functional $\mathcal J$. If $\mathcal J$ is the TV denoising functional \eqref{TVfunc} such a free parameter is the regularization parameter $\lambda$. The most standard quality measure used in the bilevel context is the mean of $L^2$ squared distances of solutions of the variational model to desirable examples that are given in form of a training set. For learning variational image denoising models such a training set consists of noisy images and the corresponding clean/true images. In other contexts the training set will be different, e.g. for image segmentation the training set might consist of the to be segmented image and the true segmentation. Once the parameters in the variational model are learned on the basis of the training set, then the learned model is used for new image data. See \cite{calatroni2015bilevel} for a recent review on bilevel learning in image processing.

In the context of learning image processing approaches, the constraint problem is typically non-smooth --- as with TV regularization as in \eqref{TVfunc} --- making its robust numerical solution a challenging topic. In particular, the derivation of sharp, analytic optimality conditions usually requires twice-continuous differentiability of the functional in the lower level problem and invertibility of its Hessian. Roughly, this is because the solution of the lower level problem does in general not have an explicit expression and we therefore have to apply the implicit function theorem for being able to insert it in the optimality condition for the upper level problem. A successful strategy for dealing with non-smooth lower level problems, therefore, are targeted, active-inactive set smoothing approaches, such as smoothing the TV with Huber regularization \cite{Juan_Carlos,Juan_Carlos_Carolina,kunischpock}. Another recent proposal for the computational realization of bilevel problems with non-smooth constraints can be found in \cite{ochs2015bilevel}, where the lower level problem is approximated by an iteration of sufficiently smooth update rules. The latter has been derived considering the discrete bilevel problem. In contrast, deriving the optimality conditions for the smoothed-problem in function space as in \cite{Juan_Carlos,Juan_Carlos_Carolina}, following the principle of optimize-then-discretize rather than discretize-then-optimize, has the advantage that these conditions can be used to construct resolution independent iterative schemes \cite{hintermuller2006infeasible}. This is the approach that we too pursue in this paper.

We consider the bilevel problem for learning the parameter $\lambda$ for a smoothed version of the TV denoising model in \eqref{TVfunc}. Given a training set $(u_i^\dag,f_i), ~i=1, \dots, N,$ of true and noisy images, respectively, the bilevel optimization problem under consideration reads as follows:
Find a minimizer $(u^*_1, \dots,u^*_N, \lambda^*)\in [H^1_0(\Omega)]^N \times H^1(\Omega)$ of the problem
\begin{subequations}\label{eq00_1}
\begin{align}
& \underset{(u_1, \dots, u_N,\lambda) \in [H^1_0(\Omega)]^N \times H^1(\Omega)}{\min }J(u_1, \dots,u_N, \lambda) := \sum_{i=1}^N \|u_i-u_i^\dag\|^2_{L^2} + \beta\|\lambda\|^2_{H^1(\Omega)} \label{eq00_1a}\\
&\text{subject to:} \notag\\
& \langle e_i(u_i,\lambda), v\rangle_{ H^{-1},  H^1_0} = \mu \big(D u_i, Dv\big)_{L^2} + \big(h_\gamma(Du_i), Dv\big)_{L^2}\label{eq00_1b}\\ & \hspace{4cm} + \int\limits_\Omega{\lambda \phi'(u_i, f_i)v dx} = 0\quad
\text{ for all } v\in H^1_0(\Omega), ~i=1, \dots, N, \notag\\
& \lambda \geq 0 \quad \text{ a.e. in }\Omega,
\end{align}
\end{subequations}
where $N$ is the size of the training set of images, $0 < \mu \ll 1$, $e_i: H^1_0(\Omega) \times H^1(\Omega) \rightarrow  H^{-1}(\Omega)$, for $i=1, \dots, N$, and
$$\phi(u_i, f_i)=(u_i-f_i)^2, ~i=1, \dots, N.$$
Equations \eqref{eq00_1b} correspond to the necessary and sufficient optimality conditions of a regularized version of the total variation denoising models. In this manner, we replace the lower level minimization problems by an equivalent system of partial differential equations.

The $C^2$-regularizing function $h_\gamma: \RR^n \to \RR^n$ is given by:
\setcounter{equation}{1}
\begin{equation}\label{eq00_2}
h_\gamma(z) =
\begin{cases}\frac{z}{|z|}\qquad & \mbox{if}\quad \gamma|z| \geq b,\\
\begin{aligned}
\frac{z}{|z|}\bigg\{\frac{2\gamma - 1}{4\gamma} &+ \frac{\gamma|z|}{2} - \frac{\gamma}{2}\big(\gamma|z|-a\big)\big(\gamma|z| - b\big)\\
 &+ \frac{\gamma^3}{2}\big(\gamma|z|-a\big)^2\big(\gamma|z|-b\big)^2 \bigg\}
\end{aligned} \qquad & \mbox{if}\quad a < \gamma|z| \leq b,\\
\gamma z \qquad & \mbox{else,}
\end{cases}\end{equation}
where $a := 1-\frac{1}{2\gamma}$, $b := 1+\frac{1}{2\gamma}$, $|\cdot|$ stands for the euclidean norm and the division has to be understood componentwise. This function locally regularizes the subgradient of the TV-norm around $0$. Note that the smoothing applied to the TV denoising problem firstly smoothes the TV with \eqref{eq00_2}, and secondly adds a small elliptic regularization term (weighted by $\mu$) to the functional which results in the weak optimality condition in \eqref{eq00_1b}. We have outlined the reason for the Huber regularization above. The reason for the addition of the elliptic term $\mu\|Du\|_2^2$ to \eqref{TVfunc} is, that it numerically renders the inversion of the Hessian of the lower level functional more robust and that it places the problem in Hilbert space and therefore opens up a large toolbox for the analysis of the smoothed problem and its approximation properties, see also \cite{de2016structure}.

The next result involves some properties of $h_\gamma$, which will be used throughout the paper.
\begin{lemma}\label{characters_h_gamma}
The first and second derivative of the function $h_\gamma: \RR^n \to \RR^n$ are Lipschitz continuous functions, with Lipschitz constants depending only on $\gamma$.
\end{lemma}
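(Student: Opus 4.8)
The plan is to exploit the fact that $h_\gamma$ is radially symmetric, reduce the claim to a one–dimensional statement about its scalar profile, and then control all derivatives by splitting $\RR^n$ into the three regimes $\{\gamma|z|\le a\}$, $\{a\le\gamma|z|\le b\}$, $\{\gamma|z|\ge b\}$ appearing in \eqref{eq00_2}. First I would write $h_\gamma(z)=\kappa_\gamma(|z|)\,z$ for $z\neq 0$, where $\kappa_\gamma(r):=m_\gamma(r)/r$ and $m_\gamma:[0,\infty)\to\RR$ is the scalar profile read off from \eqref{eq00_2}: $m_\gamma(r)=\gamma r$ for $\gamma r\le a$; $m_\gamma(r)$ equals the degree–four polynomial in $\gamma r$ given by the curly bracket of \eqref{eq00_2} for $a\le\gamma r\le b$; and $m_\gamma(r)\equiv 1$ for $\gamma r\ge b$. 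The point of this rewriting is that $\kappa_\gamma\equiv\gamma$ on $\{\gamma r\le a\}$, so $h_\gamma$ is \emph{affine} near the origin and the apparent singularity of $z/|z|$ is harmless, while $\kappa_\gamma(r)=1/r$ on $\{\gamma r\ge b\}$, where $z/|z|$ is smooth with explicitly decaying derivatives.

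Next I would verify that $m_\gamma\in C^2([0,\infty))$ — equivalently, that $\kappa_\gamma$ is $C^2$ once the removable singularity at $r=0$ is filled in — by checking that the three pieces and their first two $r$–derivatives agree at the interfaces $\gamma r=a$ and $\gamma r=b$. This is the routine but essential computation for which the quartic correction in \eqref{eq00_2} was built: at $\gamma r=a$ all three expressions give $m_\gamma=a$, $m_\gamma'=\gamma$, $m_\gamma''=0$, and at $\gamma r=b$ they give $m_\gamma=1$, $m_\gamma'=0$, $m_\gamma''=0$. Consequently $m_\gamma''$ is continuous on $[0,\infty)$, vanishes outside the bounded interval $[a/\gamma,b/\gamma]$, and on that interval is a polynomial of degree two in $r$ with coefficients that are explicit functions of $\gamma$; hence $m_\gamma''$ — and with it $\kappa_\gamma,\kappa_\gamma',\kappa_\gamma''$ — is Lipschitz on $[0,\infty)$ with a constant depending only on $\gamma$.

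The estimate is then transferred to $h_\gamma$ by differentiating $h_\gamma(z)=\kappa_\gamma(|z|)\,z$ in each regime. On $\{\gamma|z|\le a\}$ the map is affine, so $D^2h_\gamma\equiv 0$ and $D^3h_\gamma\equiv 0$. On $\{\gamma|z|\ge b\}$ one has $h_\gamma(z)=z/|z|$, whose second and third derivatives are explicit, decay like $|z|^{-2}$ and $|z|^{-3}$, and are therefore bounded there by fixed powers of $\gamma$ because $|z|\ge b/\gamma$. On the compact annulus $\{a\le\gamma|z|\le b\}$ the map $h_\gamma$ is $C^\infty$ (it is built from $\kappa_\gamma$ and the smooth map $z\mapsto|z|$, which stays away from $0$ there), so $\|D^2h_\gamma\|$ and $\|D^3h_\gamma\|$ are bounded, with bounds depending only on $\gamma$. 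Since $h_\gamma\in C^2(\RR^n)$ by the previous step, $Dh_\gamma$ is $C^1$ with $\|D^2h_\gamma\|\le M_1(\gamma)<\infty$ everywhere, so $Dh_\gamma$ is $M_1(\gamma)$–Lipschitz by the mean value theorem; and $D^2h_\gamma$ is continuous, piecewise $C^1$, hence locally Lipschitz, with $\|D^3h_\gamma\|\le M_2(\gamma)$ wherever it is defined (i.e.\ off the two interface spheres), so integrating along segments yields that $D^2h_\gamma$ is $M_2(\gamma)$–Lipschitz on $\RR^n$. This is the assertion.

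The only genuinely delicate aspect is the simultaneous control of the three regimes and the bookkeeping of constants: near the origin the formula $z/|z|$ is singular but rescued by $m_\gamma(r)=\gamma r$; in the far field $z/|z|$ is smooth but one must show its derivatives remain bounded and Lipschitz all the way to infinity; and in the intermediate shell only $C^2$ (not $C^3$) gluing is available, so the Lipschitz bound for $D^2h_\gamma$ has to be obtained through the a.e.\ derivative rather than classically. Making sure that every constant appearing — in each of the three regimes and in the gluing — depends on $\gamma$ alone is the crux; everything else is elementary calculus.
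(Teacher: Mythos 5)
Your argument is correct, and it takes a genuinely different route from the paper's. The paper proves the lemma by writing out $h'_\gamma(z)[\xi]$ and $h''_\gamma(z)[\xi,\tau]$ explicitly on the three regions and then estimating the difference $R(z,\hat z,\xi,\tau)=h''_\gamma(z)[\xi,\tau]-h''_\gamma(\hat z)[\xi,\tau]$ by a six-case analysis according to which regions $z$ and $\hat z$ fall into (both inner, both outer, both in the shell, and the three mixed cases), producing explicit numerical constants such as $\tfrac{220\gamma^6}{(2\gamma+1)^3}$; the mixed cases are handled by observing that $|z-\hat z|$ is then bounded below by the width of the shell, or that quantities like $t_1^z$, $t_2^z$ vanish at the interface and are themselves controlled by $|z-\hat z|$. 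You instead reduce to the scalar radial profile $m_\gamma$, verify the $C^2$ gluing at $\gamma r=a$ and $\gamma r=b$ (your claimed interface values $m_\gamma=a,\,m_\gamma'=\gamma,\,m_\gamma''=0$ and $m_\gamma=1,\,m_\gamma'=0,\,m_\gamma''=0$ are correct), and then obtain both Lipschitz statements from the a.e.\ boundedness of $D^2h_\gamma$ and $D^3h_\gamma$ together with integration along segments, which crosses each interface sphere only finitely often. Your approach is shorter and more conceptual — it makes transparent \emph{why} the quartic correction in \eqref{eq00_2} is there (to achieve $C^2$ matching) and avoids the mixed-region bookkeeping entirely — at the price of not producing the explicit constants that the paper's computation yields; both correctly track that all constants depend only on $\gamma$ (the lower bound $|z|\geq a/\gamma>0$ on the shell and the outer region is what keeps the derivatives of $z/|z|$ under control, exactly as you say). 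One further remark: the paper's appendix in fact only carries out the difference estimate for $h''_\gamma$, whereas you treat the Lipschitz continuity of $h'_\gamma$ explicitly via the uniform bound on $D^2h_\gamma$ and the mean value theorem, which is the cleaner way to get that half of the statement.
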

\begin{proof}
The proof is included in the appendix (Section \ref{sec:Appendix}).
\end{proof}

In order to simplify the presentation, we focus hereafter on the case $N=1$. The results are, however, easily extendable to larger training sets, as will be shown in Section 4.

\subsection{Differentiability of the solution operator}\label{sub:Differentiability and optimality system}

From \cite{Juan_Carlos_Carolina} we know that for each fixed $\gamma > 0$, there exists an optimal solution for problem (\ref{eq00_1}). Denoting by $G:  H^1(\Omega) \rightarrow  H^1_0(\Omega)$ the solution operator $
G(\lambda) =  u$, where $u$ is solution of equation \eqref{eq00_1b} corresponding to $\lambda\in  H^1(\Omega)$, it has been shown in \cite{Juan_Carlos_Carolina} that the operator is G\^ateaux differentiable. In the next theorem we improve that result and prove that the solution operator is actually Fr\'echet differentiable.

\begin{theorem}\label{frechet_diff_G}
Let $f \in L^p(\Omega),$ for some $p>2$, and $\lambda \in \iV:=\{v\in H^1(\Omega): v\geq 0 \text{ a.e. in } \Omega\}$. Let further $B(\lambda)$ be a neighbourhood of $\lambda$. Then, the solution operator
\begin{align*}
G: B(\lambda) & \rightarrow H^1_0(\Omega)\\
 \tilde \lambda & \mapsto u(\tilde \lambda),
\end{align*}
where $u(\tilde \lambda)$ is the solution to $(\ref{eq00_1b})$ associated to $\tilde \lambda$, is Fr\'echet differentiable on $B(\lambda)$ and its derivative at $\lambda\in\iV$, in direction $\xi\in H^1(\Omega)$, is given by $z^\xi_\lambda = G'(\lambda)\xi \in H_0^1(\Omega)$, which corresponds to the unique solution of the linearized equation:
\begin{equation}\label{equ_g_gamma_d}\mu(D z^\xi_\lambda, Dv\big)_{L^2} + \big(h'_\gamma(Du) Dz^\xi_\lambda, Dv\big)_{L^2} + 2\int_\Omega{\lambda z^\xi_\lambda v} \, dx+ 2\int_\Omega{\xi (u - f) v} \, dx= 0, \forall v \in H^1_0(\Omega).
\end{equation}
\end{theorem}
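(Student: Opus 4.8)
The plan is to upgrade the G\^ateaux differentiability of $G$ established in \cite{Juan_Carlos_Carolina} --- whose derivative is already the solution of \eqref{equ_g_gamma_d} --- to Fr\'echet differentiability by estimating the remainder $w_\xi:=G(\lambda+\xi)-G(\lambda)-z^\xi_\lambda$ directly. Since $\phi'(u,f)=2(u-f)$, equation \eqref{equ_g_gamma_d} reads $\mu(Dz^\xi_\lambda,Dv)_{L^2}+(h'_\gamma(Du)Dz^\xi_\lambda,Dv)_{L^2}+2\int_\Omega\lambda z^\xi_\lambda v\,dx=-2\int_\Omega\xi(u-f)v\,dx$, and I would first note it is uniquely solvable by Lax--Milgram: the form is coercive because $\mu>0$, $h'_\gamma$ is positive semidefinite (monotonicity of $h_\gamma$) and $\lambda\ge 0$ a.e., while the right-hand side is a bounded functional on $H^1_0(\Omega)$ thanks to $f\in L^p(\Omega)$ and the planar Sobolev embeddings $H^1(\Omega)\hookrightarrow L^q(\Omega)$, $q<\infty$. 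Hence $\xi\mapsto z^\xi_\lambda$ is a bounded linear operator, the candidate Fr\'echet derivative; a direct remainder estimate is needed here rather than an implicit function theorem, for the reason explained in the last paragraph.

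The technical core is a uniform higher--integrability estimate for the state equation. Writing $a(z):=\mu z+h_\gamma(z)$ and using $a(0)=0$, each state equation --- and each difference of two state equations --- is equivalent to a \emph{linear} uniformly elliptic problem $-\mathrm{div}(M\,Du)+2\lambda u=g$ in which $M(x)=\int_0^1 a'(tDu(x))\,dt$ has eigenvalues in $[\mu,\mu+\|h'_\gamma\|_\infty]$ irrespective of $\lambda$, and $g$ (namely $2\lambda f$, respectively a combination of $\lambda(u_\xi-u)$ and $\xi(u_\xi-f)$) lies in $L^r(\Omega)$ for some $r>2$ --- this is exactly where $f\in L^p(\Omega)$, $p>2$, enters, with $\|g\|_{L^r}\le C\|\xi\|_{H^1(\Omega)}$ in the difference case. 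A Meyers--type estimate (cf.\ \cite{de2016structure}) then provides an exponent $s>2$ and a neighbourhood $B(\lambda)$ --- shrunk if necessary so that coercivity survives for every $\tilde\lambda\in B(\lambda)$ --- on which $G$ takes values in $W^{1,s}_0(\Omega)$ and is Lipschitz there; in particular $\|u_\xi-u\|_{W^{1,s}(\Omega)}\le C\|\xi\|_{H^1(\Omega)}$, where $u_\xi:=G(\lambda+\xi)$, $u:=G(\lambda)$.

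Next I would subtract the equations for $u_\xi$, $u$, $z^\xi_\lambda$. Using $Du_\xi-Du=Dw_\xi+Dz^\xi_\lambda$, the pointwise identity $h_\gamma(Du_\xi)-h_\gamma(Du)-h'_\gamma(Du)Dz^\xi_\lambda=h'_\gamma(Du)Dw_\xi+R_\xi$ with $|R_\xi|\le g_\xi\,|Du_\xi-Du|$ and $g_\xi:=\min\{L\,|Du_\xi-Du|,\,2\|h'_\gamma\|_\infty\}$ ($L$ the Lipschitz constant of $h'_\gamma$ from Lemma \ref{characters_h_gamma}), and collecting the fidelity terms into $\lambda w_\xi+\xi(u_\xi-u)$, one finds that $w_\xi$ solves the coercive linear problem
\[ \mu(Dw_\xi,Dv)_{L^2}+\big(h'_\gamma(Du)Dw_\xi,Dv\big)_{L^2}+2\int_\Omega\lambda w_\xi v\,dx=-\big(R_\xi,Dv\big)_{L^2}-2\int_\Omega\xi(u_\xi-u)v\,dx\qquad\forall v\in H^1_0(\Omega). \]
Testing with $v=w_\xi$, discarding the nonnegative terms $\big(h'_\gamma(Du)Dw_\xi,Dw_\xi\big)_{L^2}$ and $2\int_\Omega\lambda w_\xi^2$, and using Poincar\'e's inequality gives $\|w_\xi\|_{H^1_0}\le C\big(\|R_\xi\|_{L^2}+\|\xi\|_{H^1}\|u_\xi-u\|_{H^1_0}\big)$. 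The second summand is $O(\|\xi\|_{H^1}^2)$ by $H^1$-Lipschitz continuity of $G$; for the first, H\"older together with the previous step yields $\|R_\xi\|_{L^2}\le\|g_\xi\|_{L^{2s/(s-2)}}\|Du_\xi-Du\|_{L^s}\le C\,\|g_\xi\|_{L^{2s/(s-2)}}\|\xi\|_{H^1}$, and since $g_\xi\le 2\|h'_\gamma\|_\infty$ and $g_\xi\to 0$ in measure as $\|\xi\|_{H^1}\to 0$ (because $\|u_\xi-u\|_{H^1}\le C\|\xi\|_{H^1}\to 0$), dominated convergence gives $\|g_\xi\|_{L^q}\to 0$ for every $q<\infty$. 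Therefore $\|w_\xi\|_{H^1_0}=o(\|\xi\|_{H^1})$, which is Fr\'echet differentiability at $\lambda$ with derivative $\xi\mapsto z^\xi_\lambda$; the identical argument at any point of $B(\lambda)$ yields differentiability on the whole neighbourhood.

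The main obstacle is the nonsmooth regularizing term itself: the superposition operator $u\mapsto h_\gamma(Du)$ is only G\^ateaux, \emph{not} Fr\'echet, differentiable as a map $H^1_0(\Omega)\to L^2(\Omega;\RR^2)$, because its second-order Taylor remainder is pointwise of order $|Dw|^2$, whose $L^2$-norm equals $\|Dw\|_{L^4}^2$ and is not $o(\|Dw\|_{L^2})$. Consequently the control-to-state map is not $C^1$ between these spaces and a naive implicit function theorem does not apply; the argument closes only because the particular increment $G(\lambda+\xi)-G(\lambda)$ is small in the stronger $W^{1,s}$-norm and because in $R_\xi$ one keeps the factor $g_\xi$, which vanishes in every $L^q$, rather than the crude quadratic bound $|Du_\xi-Du|^2$. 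Establishing the uniform Meyers exponent $s>2$ and the associated $W^{1,s}$-Lipschitz bound --- in particular pinning down where $f\in L^p$, $p>2$, is needed --- is thus the real work.
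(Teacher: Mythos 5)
Your proof is correct and follows the same overall strategy as the paper's: first an $O(\|\xi\|_{H^1})$ Lipschitz bound for $u_\xi-u$ in $W^{1,s}(\Omega)$ for some $s>2$ (the paper quotes Gr\"oger's theorem for this; your Meyers-type derivation for the linearized-in-the-mean divergence-form equation is essentially a proof sketch of the same fact, and your bookkeeping of where $f\in L^p$, $p>2$, enters is accurate), then subtraction of the three equations, testing the equation for the remainder $w_\xi=u_\xi-u-z^\xi_\lambda$ (the paper's $\eta$) with $w_\xi$ itself, coercivity from $\mu>0$, monotonicity of $h_\gamma$ and $\lambda\geq 0$, and finally an estimate of the second-order Taylor remainder of $h_\gamma$. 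The one step where you genuinely deviate is that last estimate, and your version is the more careful one. The paper bounds the remainder pointwise by $C|D(u_\xi-u)|^2$ and asserts $\|\eta\|_{H^1_0}\leq C\big(\|u_\xi-u\|^2_{W^{1,p}}+\|\xi\|_{H^1}\|u_\xi-u\|_{H^1_0}\big)$ ``for all $p>2$''; taken literally the quadratic term costs $\|D(u_\xi-u)\|_{L^4}^2$, whereas Gr\"oger only guarantees some $\hat p>2$, so one must implicitly interpolate against the global bound $2\|h'_\gamma\|_\infty|D(u_\xi-u)|$ to close the argument with a small exponent. Your factorization $|R_\xi|\leq g_\xi\,|Du_\xi-Du|$ with $g_\xi=\min\{L|Du_\xi-Du|,\,2\|h'_\gamma\|_\infty\}$, H\"older with exponents $\big(\tfrac{2s}{s-2},s\big)$, and the observation that $\|g_\xi\|_{L^q}\to 0$ makes exactly that interpolation explicit, needs only $s>2$, and yields $o(\|\xi\|_{H^1})$ rather than $O(\|\xi\|^2_{H^1})$ --- which is all Fr\'echet differentiability requires. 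One small polish: instead of invoking dominated convergence for convergence in measure (a sequential statement), you can get a uniform and quantitative rate directly from $\|g_\xi\|_{L^q}\leq\|g_\xi\|_{L^2}^{2/q}\,\|g_\xi\|_{L^\infty}^{1-2/q}\leq C\,\|\xi\|_{H^1}^{2/q}$, using $\|g_\xi\|_{L^2}\leq L\|D(u_\xi-u)\|_{L^2}=O(\|\xi\|_{H^1})$.
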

\begin{proof}
Along this proof we denote by $C$ a generic positive constant which may depend on $\gamma$ and $\lambda$. Let us also denote by $u$ and $u_\xi$ the corresponding solutions to (\ref{eq00_1b}) with $\lambda$ and $\lambda + \xi$, respectively. By monotonicity techniques (see \cite[Thm.~2.7]{de2015numerical}), we obtain the existence of a unique solution $u_\xi$, for $\|\xi\|_{H^1(\Omega)}$ sufficiently small, and a unique solution $z^\xi_\lambda \in H_0^1(\Omega)$ to (\ref{equ_g_gamma_d}). Moreover, we get the estimates
\begin{equation} \label{eq: a priori estimates of state and lin. eq}
\|u_\xi - u\|_{H^1_0} = O(\|\xi\|_{H^1}),\quad \|z^\xi_\lambda\|_{H^1_0} = O(\|\xi\|_{H^1}).
\end{equation}

By taking the difference between (\ref{eq00_1b}), with $\lambda$ and $\lambda + \xi$, and (\ref{equ_g_gamma_d}) we get that
\begin{align*}\mu\big(D(u_\xi - u - z^\xi_\lambda), Dv\big)_{L^2} &+ \big( h_\gamma(Du_\xi) - h_\gamma(Du) - h'_\gamma(Du)Dz^\xi_\lambda, Dv\big)_{L^2} \\
&+ 2\int_\Omega\lambda(u_\xi - u - z^\xi_\lambda)v \, dx+ 2\int_\Omega\xi(u_\xi - u)v \, dx= 0, \quad \forall v\in H^1_0(\Omega).
\end{align*}
Introducing $\eta := u_\xi - u - z^\xi_\lambda$, we can write the last equation as follows
\begin{align*}\mu\big(D\eta, Dv\big)_{L^2} &+ \big(h'_\gamma(Du)D\eta, Dv\big)_{L^2} + 2\int_\Omega\lambda\eta v \, dx= - 2\int_\Omega\xi(u_\xi - u)v \, dx\\
&- \big( h_\gamma(Du_\xi) - h_\gamma(Du) - h'_\gamma(Du)D(u_\xi - u), Dv\big)_{L^2},\quad\forall v\in H^1_0(\Omega).
\end{align*}
Taking $v = \eta$ and using the monotonicity of $h'_\gamma(Du)$ and $\lambda \geq 0$ a.e. in $\Omega$, we get that
$$\|\eta\|^2_{H^1_0} \leq \bigg| \big(h_\gamma(Du_\xi) - h_\gamma(Du) - h'_\gamma(Du)D(u_\xi - u), D\eta\big)_{L^2}\bigg| + C \|\xi\|_{H^1}\|u_\xi - u\|_{H^1_0}\|\eta\|_{H^1_0}.
$$
Due to the differentiability of $h_\gamma$, we obtain
\begin{equation} \label{eq: bound on eta}
\|\eta\|_{H^1_0} \leq C \left( \|u_\xi - u\|^2_{W^{1,p }} + \|\xi\|_{H^1}\|u_\xi - u\|_{H^1_0} \right),
\end{equation}
for all $p > 2$ and some constant $C > 0$. Thanks to \cite[Thm.~1]{Groeger1989}, there is some $\hat p>2$ such that
\begin{equation} \label{eq:groeger bound}
\|u_\xi - u\|_{W^{1,\hat p}} =O(\|\xi\|_{H^1}).
\end{equation}
From the latter and estimates \eqref{eq: a priori estimates of state and lin. eq}, it then follows that
$\|\eta\|_{H^1_0} = O(\|\xi\|^2_{H^1})$. The last relation ensures the Fr\'echet differentiability of $G$ and $z^\xi_\lambda = G'(\lambda)\xi$.
\end{proof}

A second-order differentiability result for the solution mapping can also be obtained under certain regularity assumptions on the data. The second derivative is used in the proof of second order sufficient optimality conditions and, in its discretized version, for the convergence analysis of the proposed Newton type algorithms.
\begin{theorem}\label{second_frechet_diff_G}
If $f \in L^\infty(\Omega)$ and $u(\lambda) \in C^{1,\beta}(\Omega)$, for some $\beta \in (0,1)$, and there exists $\hat p >4$ such that
\begin{equation} \label{eq: enhanced Groeger}
\|u_\zeta - u\|_{W^{1,\hat p}} \leq C \|\zeta\|_{H^1}, \text{ for any }\zeta \in H^1(\Omega),
\end{equation}
then $G$ is twice Fr\'echet differentiable and its second derivative, in directions $(\xi,\zeta)$, is given by $w_\lambda^{(\xi,\zeta)}\in H_0^1(\Omega)$, solution of
\begin{multline}
\mu\big(Dw_\lambda^{(\xi,\zeta)} , Dv\big) + \big(h'_\gamma(Du(\lambda))D w_\lambda^{(\xi,\zeta)}, Dv\big) + 2\int_\Omega \lambda w_\lambda^{(\xi,\zeta)} v \,dx \\
+ \big(h''_\gamma(Du(\lambda)) [Dz^\xi_\lambda, D z_\lambda^\zeta] , Dv \big)
+ 2\int_\Omega \zeta z^\xi_\lambda v \,dx + 2\int_\Omega \xi z^\zeta_\lambda v \,dx = 0, \quad \forall v\in H_0^1(\Omega).
\end{multline}
\end{theorem}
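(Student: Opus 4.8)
The plan is to show that $G'$, regarded as a map $B(\lambda)\to\mathcal L\bigl(H^1(\Omega),H^1_0(\Omega)\bigr)$, is Fréchet differentiable with derivative $\zeta\mapsto\bigl(\xi\mapsto w_\lambda^{(\xi,\zeta)}\bigr)$; this is exactly twice Fréchet differentiability of $G$, with $G''(\lambda)[\xi,\zeta]=w_\lambda^{(\xi,\zeta)}$. First I would check that the candidate is well defined. The left-hand side of the defining identity for $w_\lambda^{(\xi,\zeta)}$ is bounded and coercive on $H^1_0(\Omega)$ with constant $\mu$, since $h'_\gamma(Du)$ is positive semidefinite (Lemma \ref{characters_h_gamma}) and $\lambda\ge 0$; the remaining terms form a bounded functional because $f\in L^\infty(\Omega)$ and $u(\lambda)\in C^{1,\beta}(\Omega)$ turn the linearized equation \eqref{equ_g_gamma_d} into a linear elliptic equation with Hölder principal part $\mu I+h'_\gamma(Du)$ and right-hand side $-2\xi(u-f)\in L^q(\Omega)$ for every $q<\infty$, so elliptic regularity yields $z^\xi_\lambda\in C^{1,\alpha}(\overline\Omega)$ with $\|\nabla z^\xi_\lambda\|_{L^\infty}\le C\|\xi\|_{H^1}$ and likewise for $z^\zeta_\lambda$, whence $h''_\gamma(Du)[Dz^\xi_\lambda,Dz^\zeta_\lambda]\in L^\infty(\Omega)$ by the boundedness of $h''_\gamma$. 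Lax--Milgram then gives existence, uniqueness, and $\|w_\lambda^{(\xi,\zeta)}\|_{H^1_0}\le C\|\xi\|_{H^1}\|\zeta\|_{H^1}$, so $(\xi,\zeta)\mapsto w_\lambda^{(\xi,\zeta)}$ is a bounded bilinear map.

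The identity for $w_\lambda^{(\xi,\zeta)}$ is precisely what is obtained by formally differentiating \eqref{equ_g_gamma_d} with respect to $\lambda$ in direction $\zeta$, using $G'(\lambda)\zeta=z^\zeta_\lambda$, so it is the natural candidate, and the work is to estimate the remainder. Fix $\xi$ with $\|\xi\|_{H^1}\le 1$, write $u,u_\zeta$ for the states at $\lambda,\lambda+\zeta$, $z=z^\xi_\lambda$, $z_\zeta=z^\xi_{\lambda+\zeta}$, $w=w_\lambda^{(\xi,\zeta)}$, and set $r:=z_\zeta-z-w$ (all well defined for $\|\zeta\|_{H^1}$ small, by Theorem \ref{frechet_diff_G}). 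Subtracting \eqref{equ_g_gamma_d} at $\lambda+\zeta$, \eqref{equ_g_gamma_d} at $\lambda$, and the identity for $w$, and regrouping, one finds that $r$ solves
\[
\mu(Dr,Dv)+\bigl(h'_\gamma(Du)Dr,Dv\bigr)+2\int_\Omega\lambda r v\,dx=\langle\mathcal R_\zeta,v\rangle,\qquad\forall v\in H^1_0(\Omega),
\]
where $\langle\mathcal R_\zeta,v\rangle=-\bigl((h'_\gamma(Du_\zeta)-h'_\gamma(Du))Dz_\zeta,Dv\bigr)+\bigl(h''_\gamma(Du)[Dz,Dz^\zeta_\lambda],Dv\bigr)-2\int_\Omega\zeta(z_\zeta-z)v\,dx-2\int_\Omega\xi(u_\zeta-u-z^\zeta_\lambda)v\,dx$.

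Then I would test with $v=r$. Coercivity gives $\tfrac\mu2\|r\|_{H^1_0}^2\le|\langle\mathcal R_\zeta,r\rangle|$ after absorbing the $r$-dependent pieces of $\mathcal R_\zeta$ on the left, namely the piece $-2\int\zeta r v$ inside $-2\int\zeta(z_\zeta-z)v$ (write $z_\zeta-z=w+r$) and the piece $(h''_\gamma(Du)[Dz^\zeta_\lambda,Dr],Dv)$ produced by the Taylor step below, both of which with $v=r$ are $\le C\|\zeta\|_{H^1}\|r\|_{H^1_0}^2$ via the $L^\infty$-bound on $\nabla z^\zeta_\lambda$ and Poincaré. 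The term $-2\int\xi(u_\zeta-u-z^\zeta_\lambda)v$ is $O(\|\zeta\|_{H^1}^2)\|r\|_{H^1_0}$ because $\|u_\zeta-u-z^\zeta_\lambda\|_{H^1_0}=O(\|\zeta\|_{H^1}^2)$ (from the proof of Theorem \ref{frechet_diff_G}), and the leftover $-2\int\zeta w v$ is $O(\|\zeta\|_{H^1}^2)\|r\|_{H^1_0}$ since $\|w\|_{H^1_0}\le C\|\xi\|_{H^1}\|\zeta\|_{H^1}$. For the coefficient difference I would use the second-order Taylor expansion
\begin{multline*}
(h'_\gamma(Du_\zeta)-h'_\gamma(Du))Dz_\zeta-h''_\gamma(Du)[Dz,Dz^\zeta_\lambda]=\bigl[h'_\gamma(Du_\zeta)-h'_\gamma(Du)-h''_\gamma(Du)D(u_\zeta-u)\bigr]Dz_\zeta\\+h''_\gamma(Du)\bigl[D(u_\zeta-u-z^\zeta_\lambda),Dz_\zeta\bigr]+h''_\gamma(Du)\bigl[Dz^\zeta_\lambda,D(z_\zeta-z)\bigr],
\end{multline*}
obtained from $D(u_\zeta-u)=Dz^\zeta_\lambda+D(u_\zeta-u-z^\zeta_\lambda)$ and $Dz_\zeta=Dz+D(z_\zeta-z)$. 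The first bracket is $O(|D(u_\zeta-u)|^2)$ pointwise by the Lipschitz continuity of $h''_\gamma$ (Lemma \ref{characters_h_gamma}), so its product with $Dz_\zeta$ is bounded in $L^2$ by $C\|D(u_\zeta-u)\|_{L^4}^2\|\nabla z_\zeta\|_{L^\infty}\le C\|\zeta\|_{H^1}^2\|\xi\|_{H^1}$ — this is exactly where hypothesis \eqref{eq: enhanced Groeger} with $\hat p>4$ enters, to give $\nabla(u_\zeta-u)\in L^4(\Omega)$ with $\|\nabla(u_\zeta-u)\|_{L^4}\le C\|\zeta\|_{H^1}$, and where the uniform $L^\infty$-gradient bound $\|\nabla z_\zeta\|_{L^\infty}\le C\|\xi\|_{H^1}$ (the argument of the first paragraph applied at $\lambda+\zeta$, for $\|\zeta\|_{H^1}$ small) is needed; the second summand is $O(\|\zeta\|_{H^1}^2)\|\xi\|_{H^1}$ by the same $L^\infty$-bound and $\|u_\zeta-u-z^\zeta_\lambda\|_{H^1_0}=O(\|\zeta\|_{H^1}^2)$; the third, with $z_\zeta-z=w+r$, gives the $O(\|\zeta\|_{H^1}^2)\|\xi\|_{H^1}$-contribution with $w$ and the $r$-contribution already absorbed. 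Collecting the bounds, $\|r\|_{H^1_0}\le C\|\zeta\|_{H^1}^2$ uniformly over $\|\xi\|_{H^1}\le1$, hence $\sup_{\|\xi\|_{H^1}\le1}\|z^\xi_{\lambda+\zeta}-z^\xi_\lambda-w_\lambda^{(\xi,\zeta)}\|_{H^1_0}=o(\|\zeta\|_{H^1})$, which is the assertion.

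I expect the main obstacle to be the coefficient-difference term $(h'_\gamma(Du_\zeta)-h'_\gamma(Du))Dz_\zeta$: it is only $O(\|\zeta\|_{H^1})$ a priori, so one must extract its linearization $h''_\gamma(Du)[Dz^\zeta_\lambda,Dz]$ in closed form, and the leftover — genuinely quadratic in the state increment — is pairable against $H^1_0(\Omega)$ only by combining the $C^{1,\beta}$-regularity of $u$ (which, via elliptic regularity for the linear equations, furnishes $L^\infty$ gradient bounds on $z$, $z^\zeta_\lambda$, $z_\zeta$) with the extra integrability $\hat p>4$ (which furnishes $|\nabla(u_\zeta-u)|^2\in L^2(\Omega)$). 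The remaining difficulty is bookkeeping: deciding, for each term, which factor carries the smallness $\|\zeta\|^2$, which carries $\|\xi\|$, and which must be moved to the coercive left-hand side; this part parallels the proof of Theorem \ref{frechet_diff_G}.
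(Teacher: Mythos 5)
Your overall strategy is the paper's own: define $w_\lambda^{(\xi,\zeta)}$ by Lax--Milgram, form the second-order remainder $r=z^\xi_{\lambda+\zeta}-z^\xi_\lambda-w_\lambda^{(\xi,\zeta)}$, test its equation with $r$, Taylor-expand the coefficient difference $h'_\gamma(Du_\zeta)-h'_\gamma(Du)$, and let the hypothesis $\hat p>4$ supply the square-integrability of $|D(u_\zeta-u)|^2$. The algebraic decomposition of the coefficient term and the absorption of the $r$-quadratic pieces into the coercive left-hand side are sound.

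The genuine gap lies in the regularity you assign to the linearized states. You claim $z^\xi_\lambda\in C^{1,\alpha}(\overline\Omega)$ with $\|\nabla z^\xi_\lambda\|_{L^\infty}\le C\|\xi\|_{H^1}$ and, more seriously, the same uniform bound for $z^\xi_{\lambda+\zeta}$ ``by the argument of the first paragraph applied at $\lambda+\zeta$''. Two problems. First, even at the base point the right-hand side $-2\xi(u-f)$ of \eqref{equ_g_gamma_d} lies only in $L^q(\Omega)$ for finite $q$ (since $\xi\in H^1$), so what elliptic regularity delivers --- and what the paper actually uses, see \eqref{eq: W1r a priori estimate lin. state} --- is $\|z^\xi_\lambda\|_{W^{1,s}}\le C\|\xi\|_{H^1}$ for every finite $s>2$, not an $L^\infty$ gradient bound; this part is harmless, because your exponent bookkeeping survives with $s$ large but finite (pair $\|D(u_\zeta-u)\|^2_{L^{2r}}$ with $2r\le\hat p$, $r>2$, against $\|Dz\|_{L^s}$, $1/r+1/s=1/2$ --- exactly where $\hat p>4$ enters). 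Second, and not harmless: the equation for $z^\xi_{\lambda+\zeta}$ has principal coefficient $h'_\gamma(Du_\zeta)$, and the theorem hypothesizes $u(\lambda)\in C^{1,\beta}$ only at the base point, so no H\"older continuity of that coefficient, and hence no regularity improvement for $z^\xi_{\lambda+\zeta}$ uniform in $\zeta$, is available; your estimate of the Taylor-remainder term $\bigl[h'_\gamma(Du_\zeta)-h'_\gamma(Du)-h''_\gamma(Du)D(u_\zeta-u)\bigr]Dz_\zeta$ therefore has no justified bound on its last factor. The repair is the one the paper performs: write $Dz_\zeta=Dz^\xi_\lambda+D(z^\xi_{\lambda+\zeta}-z^\xi_\lambda)$ throughout, keep only base-point quantities as high-integrability multipliers, and derive a $W^{1,\hat q}$ bound with $\hat q\ge 4$ on the difference $z^\xi_{\lambda+\zeta}-z^\xi_\lambda$ from its own equation, whose principal part carries the base-point coefficient $h'_\gamma(Du)$; see \eqref{eq: bound difference lin. states}--\eqref{eq: W1p bound difference lin. states}. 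With that substitution your argument closes.
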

\begin{remark}
The H\"older continuity assumption on the gradient of $u(\lambda)$ and estimate \eqref{eq: enhanced Groeger} may be proved under some hypothesis on the domain and the data (see \cite[Thm.~2.2]{DelosReyesDhamo} and \cite[Thm.~1]{Groeger1989}, repectively).
\end{remark}

\begin{proof}[Proof of Theorem \ref{second_frechet_diff_G}]
If $f \in L^\infty(\Omega)$ and $u(\lambda) \in C^{1,\beta}(\Omega)$, we obtain from elliptic regularity theory (see, e.g., \cite{Tro87}) that
\begin{equation} \label{eq: W1r a priori estimate lin. state}
\|z^\xi_\lambda\|_{W^{1,s}} \leq C \|\xi\|_{H^1}, \text{ for any }s>2,
\end{equation}
and
\begin{align*}
\| \eta \|_{W^{1,r}} & \leq C \left( \| \eta \|_{H_0^1} + \|\xi (u_\xi-u)\|_{L^r} + \| h_\gamma(Du_\xi) - h_\gamma(Du) - h'_\gamma(Du)D(u_\xi - u) \|_{L^r} \right)\\
& \leq C \left( \| \xi \|_{H^1}^2 + \|\xi\|_{H^1} \|u_\xi-u\|_{H_0^1} + \|u_\xi-u\|_{W^{1,r}}^2 \right),
\end{align*}
where $C>0$ stands for a generic constant and $r \in (2,\hat p)$. Thanks to estimates \eqref{eq: a priori estimates of state and lin. eq} and \eqref{eq: enhanced Groeger}, we then obtain that
\begin{equation} \label{eq: W1r a priori estimate derivative}
\| \eta \|_{W^{1,r}} \leq C_r \|\xi\|_{H^1}, \text{ for }r \in (2,\hat p).
\end{equation}

For $\xi, \zeta \in H^1(\Omega)$, we denote by $w^{(\xi, \zeta)}_\lambda$ the solution of the following equation:
\begin{multline}\label{eq_2nd_G}
\mu\big(Dw, Dv\big) + \big(h'_\gamma(Du)Dw, Dv\big) + 2\int_\Omega \lambda w v \, dx\\
+ \big(h''_\gamma(Du) [Dz^\xi_\lambda, D z_\lambda^\zeta] , Dv \big)
+ 2\int_\Omega \zeta z^\xi_\lambda v \, dx + 2\int_\Omega \xi z^\zeta_\lambda v \, dx= 0, \forall v\in H_0^1(\Omega).
\end{multline}
Existence and uniqueness of $w^{(\xi, \zeta)}_\lambda$ follows in a standard manner from the Lax-Milgram theorem.

Let now $\lambda_\zeta:=\lambda + \zeta$ and let $z^\xi_{\lambda_\zeta} := G'(\lambda_\zeta)\xi$, with $u_\zeta$ the solution to (\ref{eq00_1b}) corresponding to $\lambda_\zeta$. Taking the difference between (\ref{equ_g_gamma_d}) for $z^\xi_\lambda$ and $z^\xi_{\lambda_\zeta}$, we get
\begin{multline}\label{eq_sub_z}
\mu\big(D(z^\xi_{\lambda_\zeta} - z^\xi_\lambda), Dv\big)  + \big(h'_\gamma(Du)D(z^\xi_{\lambda_\zeta} - z^\xi_\lambda), Dv\big) + 2\int_\Omega \lambda (z^\xi_{\lambda_\zeta} - z^\xi_\lambda) v \, dx \\
+ \big(\big[h'_\gamma(Du_\zeta) - h'_\gamma(Du)\big]Dz^\xi_{\lambda_\zeta} , Dv \big)+ 2\int_\Omega \zeta z^\xi_{\lambda_\zeta} v \, dx+ 2\int_\Omega \xi(u_\zeta - u)v \, dx= 0, \forall v \in H_0^1(\Omega).
\end{multline}
Testing (\ref{eq_sub_z}) with $v =z^\xi_{\lambda_\zeta} - z^\xi_\lambda $, we get
\begin{multline}\|z^\xi_{\lambda_\zeta} - z^\xi_\lambda\|^2_{H^1_0} \leq C \bigg\{\bigg|\big(\big[h'_\gamma(Du_\zeta) - h'_\gamma(Du)\big]Dz^\xi_\lambda , D(z^\xi_{\lambda_\zeta} - z^\xi_\lambda) \big)\bigg|\\
+ \bigg|\int_\Omega \zeta z^\xi_\lambda (z^\xi_{\lambda_\zeta} - z^\xi_\lambda) \, dx \bigg| + \bigg|\int_\Omega \xi(u_\zeta - u)(z^\xi_{\lambda_\zeta} - z^\xi_\lambda) \, dx \bigg|\bigg\}.
\end{multline}
From the the Lipschitz properties of $h_\gamma'(\cdot)$ the last relation yields
$$
\|z^\xi_{\lambda_\zeta} - z^\xi_\lambda\|_{H^1_0}   \leq C \left( \big\|u_\zeta - u\big\|_{W^{1,\hat p}} \|z^\xi_\lambda\|_{W^{1,\hat r}} + \|\zeta\|_{H^1} \|z^\xi_\lambda\|_{H^1_0} + \|\ \xi\|_{H^1}\|u_\zeta - u\|_{H^1_0} \right),
$$
with $\hat r$ such that $1/\hat p+1/\hat r \leq 1/2$. Considering \eqref{eq: W1r a priori estimate lin. state} and \eqref{eq:groeger bound}, then the following estimate holds
\begin{equation} \label{eq: bound difference lin. states}
\|z^\xi_{\lambda_\zeta} - z^\xi_\lambda\|_{H^1_0}   \leq  C \|\zeta\|_{H^1} \|\xi\|_{H^1}.
\end{equation}
Again, thanks to elliptic regularity theory,
\begin{equation} \label{eq: W1p bound difference lin. states}
\|z^\xi_{\lambda_\zeta} - z^\xi_\lambda\|_{W^{1,\hat q}}   \leq  C_p \|\zeta\|_{H^1} \|\xi\|_{H^1}, \quad \text{ for }\hat q=\frac{\hat r \hat p}{\hat r+ \hat p}>2.
\end{equation}
In particular, we may choose $\hat r \geq \frac{4 \hat p}{\hat p-4}$, which yields $\hat q \geq 4$.

By setting $\tau := z^\xi_{\lambda_\zeta} - z^\xi_\lambda - w^{(\xi, \zeta)}_\lambda$ and subtracting  (\ref{eq_2nd_G}) from (\ref{eq_sub_z}), we get that
\begin{multline*}
\mu\big(D\tau, Dv\big)  + \big(h'_\gamma(Du)D\tau, Dv\big) + 2\int_\Omega \lambda \tau v \, dx= \\
- \big(\big[h'_\gamma(Du_\zeta) - h'_\gamma(Du)\big] D(z^\xi_{\lambda_\zeta} - z^\xi_\lambda) , Dv \big) - 2\int_\Omega \zeta (z^\xi_{\lambda_\zeta} - z^\xi_\lambda) v \, dx- 2\int_\Omega \xi(u_\zeta - u- z^\zeta_\lambda)v \, dx\\
- \bigg(h'_\gamma(Du_\zeta)Dz^\xi_\lambda - h'_\gamma(Du)Dz^\xi_\lambda - h''_\gamma(Du) [Dz^\xi_\lambda, D z_\lambda^\zeta], Dv \bigg), \quad \forall v \in H_0^1(\Omega).
\end{multline*}
Testing the last equation with $v = \tau$ and using the ellipticity of the terms on the left hand side, we obtain that
\begin{multline}\label{eq_tau_xi_zeta}
\|\tau\|_{H^1_0}  \leq C \bigg\{ \left\| \big[h'_\gamma(Du_\zeta) - h'_\gamma(Du)\big] D(z^\xi_{\lambda_\zeta} - z^\xi_\lambda) \right\|_{L^2} + \left\| \zeta (z^\xi_{\lambda_\zeta} - z^\xi_\lambda) \right \|_{L^2}+ \\
+ \left\| \xi(u_\zeta - u- z^\zeta_\lambda) \right\|_{L^2}+ \left\| h'_\gamma(Du_\zeta)Dz^\xi_\lambda - h'_\gamma(Du)Dz^\xi_\lambda - h''_\gamma(Du) [Dz^\xi_\lambda, D z_\lambda^\zeta] \right\|_{L^2} \bigg\}.
\end{multline}

For the first term on the right hand side, thanks to the Lipschitz continuity of $h_\gamma'$ and estimate \eqref{eq: W1p bound difference lin. states}, we get that
\begin{align*}
\left\| \big[h'_\gamma(Du_\zeta) - h'_\gamma(Du)\big] D(z^\xi_{\lambda_\zeta} - z^\xi_\lambda) \right\|_{L^2} & \leq \left\| h'_\gamma(Du_\zeta) - h'_\gamma(Du) \right\|_{L^{\hat p}} \left\| z^\xi_{\lambda_\zeta} - z^\xi_\lambda \right\|_{W^{1,\hat q}}\\
& \leq L \|u_\zeta-u\|_{W^{1,\hat p}} \left\| z^\xi_{\lambda_\zeta} - z^\xi_\lambda \right\|_{W^{1,\hat q}}\\
& \leq C \|\zeta\|^2_{H^1} \|\xi\|_{H^1}.
\end{align*}

Since the solution operator has been proved to be Fr\'echet differentiable, it follows that
$\|u_\zeta - u- z^\zeta_\lambda\|_{H^1_0} = o(\|\zeta\|_{H^1})$
and, thus,
$$\left\| \xi(u_\zeta - u- z^\zeta_\lambda)\right\|_{L^2} \leq C \|\xi\|_{H^1} o(\|\zeta\|_{H^1}).$$

From \eqref{eq: bound difference lin. states} it also follows that
$$\left\| \zeta (z^\xi_{\lambda_\zeta} - z^\xi_\lambda) \right\|_{L^2} \leq C \|\zeta\|^2_{H^1} \|\xi\|_{H^1}.$$

For the last term on the right hand side of \eqref{eq_tau_xi_zeta}, we obtain that
\begin{multline*}\left\| \left( h'_\gamma(Du_\zeta) - h'_\gamma(Du) - h''_\gamma(Du) D z^\zeta_\lambda \right) D z_\lambda^\xi \right \|_{L^2} \leq
\left\|h''_\gamma(Du) D (u_\zeta-u-z_\lambda^\zeta) \right \|_{L^r} \|Dz^\xi_\lambda\|_{L^s}\\
+\left\| h'_\gamma(Du_\zeta) - h'_\gamma(Du) - h''_\gamma(Du) D(u_\zeta - u) \right\|_{L^r} \|Dz^\xi_\lambda\|_{L^s},
\end{multline*}
where $1/r+1/s=1/2$ and $r \in (2, \hat p)$. Taking into account estimates \eqref{eq: enhanced Groeger}, \eqref{eq: W1r a priori estimate lin. state} and \eqref{eq: W1r a priori estimate derivative} we get that
\begin{multline*}\left\| \left( h'_\gamma(Du_\zeta) - h'_\gamma(Du) - h''_\gamma(Du) D z^\zeta_\lambda \right) D z_\lambda^\xi \right \|_{L^2} \leq C \|\xi\|_{H^1} \left( o(\| \zeta \|_{H^1}) + o(\|u_\zeta-u\|_{W^{1,\hat p}}) \right).
\end{multline*}
Now plugging the last estimates into (\ref{eq_tau_xi_zeta}) and using \eqref{eq: enhanced Groeger}, we get that
$$\|\tau\|_{H^1_0}  \leq C \|\xi\|_{H^1} o(\|\zeta\|_{H^1}) .$$
The last relation ensures the twice differentiability of $G$ and we also have that $w^{(\xi, \zeta)}_\lambda = G''(\lambda)[\xi, \zeta]$.
\end{proof}

\subsection{Optimality conditions}
\label{sub:Optimality conditions}

Based on the differentiability properties of the solution operator, a first order optimality system characterizing the optimal weight function is derived next. The solutions to the optimality system are stationary points, which may or may not correspond to local optimal solutions of \eqref{eq00_1}. To verify that a stationary point is actually a minimizer, second order sufficient optimality conditions are investigated thereafter.
\begin{theorem}\label{thr_1}
Let $(u, \lambda)\in  H^1_0(\Omega)\times  \iV$ be an optimal solution for $(\ref{eq00_1})$. Then there exist $p \in  H^1_0(\Omega)$ and $\vartheta \in L^2(\Omega)$ such that the following optimality system holds (in weak sense):
{\allowdisplaybreaks
\begin{subequations}\label{eq2.30}
\begin{align}
- \mu \Delta u &- \Div ~q + 2 \lambda (u - f) = 0 && \text{ in }\Omega, \label{2.30a} \\
u&=0 &&\text{ on }\Gamma,\\
q &= h_\gamma(Du) && \text{a.e. in }\Omega,\label{2.30a1}\\
- \mu \Delta p &- \Div ~z + 2(\lambda p + u-u^\dag) = 0 && \text{ in }\Omega, \label{2.30b}\\
p&=0 &&\text{ on }\Gamma,\\
z& = h'_\gamma(Du)^*Dp && \text{a.e. in }\Omega, \label{2.30b1}\\
\vartheta &=-\beta \Delta \lambda+ \beta \lambda + (u - f)p && \text{ in }\Omega,\label{2.30c} \\
\frac{\partial \lambda}{\partial \vec{n}} & = 0 &&\text{ on }\Gamma,\\
\lambda &\geq 0, \quad \vartheta \geq 0, \quad \vartheta \, \lambda=0 && \text{a.e. in }\Omega.\label{2.30e}
\end{align}
\end{subequations}}
\end{theorem}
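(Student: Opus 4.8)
The plan is to pass to the reduced problem. Using the solution operator $G$ of Theorem~\ref{frechet_diff_G} to eliminate the constraint \eqref{eq00_1b}, problem \eqref{eq00_1} becomes $\min_{\tilde\lambda\in\iV}\hat J(\tilde\lambda)$ with $\hat J(\tilde\lambda):=\|G(\tilde\lambda)-u^\dag\|_{L^2}^2+\beta\|\tilde\lambda\|_{H^1(\Omega)}^2$ over the closed convex cone $\iV$. Since $G$ is Fréchet differentiable near $\lambda$, so is $\hat J$, and optimality of $(u,\lambda)$ is equivalent to the variational inequality $\hat J'(\lambda)(\tilde\lambda-\lambda)\ge 0$ for all $\tilde\lambda\in\iV$. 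By the chain rule and the characterization \eqref{equ_g_gamma_d} of $G'(\lambda)$ one gets $\hat J'(\lambda)\xi=2\,(u-u^\dag,z^\xi_\lambda)_{L^2}+2\beta\,(\lambda,\xi)_{H^1(\Omega)}$ with $z^\xi_\lambda=G'(\lambda)\xi$, so the first task is to rewrite the implicit term $(u-u^\dag,z^\xi_\lambda)_{L^2}$ explicitly in $\xi$.

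I would do this by introducing the adjoint state $p\in H^1_0(\Omega)$ as the unique solution (by Lax--Milgram, using the monotonicity of $h'_\gamma(Du)$ and $\lambda\ge0$ as in the proof of Theorem~\ref{frechet_diff_G}) of
\[
\mu(Dv,Dp)_{L^2}+\bigl(h'_\gamma(Du)Dv,Dp\bigr)_{L^2}+2\!\int_\Omega\!\lambda\,p\,v\,dx=-2\!\int_\Omega\!(u-u^\dag)\,v\,dx,\qquad\forall v\in H^1_0(\Omega),
\]
which is exactly \eqref{2.30b}--\eqref{2.30b1} in weak form with $z=h'_\gamma(Du)^*Dp$, while \eqref{2.30a}--\eqref{2.30a1} just restate \eqref{eq00_1b} with $q=h_\gamma(Du)$. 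Testing this adjoint equation with $v=z^\xi_\lambda$, testing \eqref{equ_g_gamma_d} with $v=p$, and comparing the two identities gives $(u-u^\dag,z^\xi_\lambda)_{L^2}=\int_\Omega\xi\,(u-f)\,p\,dx$, hence $\hat J'(\lambda)\xi=2\beta\,(\lambda,\xi)_{H^1(\Omega)}+2\int_\Omega\xi\,(u-f)\,p\,dx$ for all $\xi\in H^1(\Omega)$.

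Next I would define the multiplier through $\langle\vartheta,\xi\rangle:=\beta\,(\lambda,\xi)_{H^1(\Omega)}+\int_\Omega(u-f)\,p\,\xi\,dx=\tfrac12\hat J'(\lambda)\xi$, a priori an element of $H^1(\Omega)^*$; integrating the $H^1$-inner product by parts formally identifies $\vartheta$ with $-\beta\Delta\lambda+\beta\lambda+(u-f)p$ and yields the natural Neumann condition $\partial\lambda/\partial\vec{n}=0$ on $\Gamma$, i.e.\ \eqref{2.30c}. The variational inequality now reads $\langle\vartheta,\tilde\lambda-\lambda\rangle\ge 0$ for all $\tilde\lambda\in\iV$. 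Because $\iV$ is a cone, the admissible choices $\tilde\lambda=0$ and $\tilde\lambda=2\lambda$ force $\langle\vartheta,\lambda\rangle=0$, and the choice $\tilde\lambda=\lambda+v$ with $v\in\iV$ forces $\langle\vartheta,v\rangle\ge0$ for every $v\in H^1(\Omega)$ with $v\ge0$. Once $\vartheta\in L^2(\Omega)$ is known, the latter gives $\vartheta\ge0$ a.e., and together with $\lambda\ge0$ and $\int_\Omega\vartheta\lambda\,dx=0$ one obtains the pointwise complementarity $\lambda\ge0$, $\vartheta\ge0$, $\vartheta\lambda=0$ a.e., which is \eqref{2.30e}; assembling \eqref{2.30a}--\eqref{2.30e} then gives \eqref{eq2.30}.

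The main obstacle is the $L^2$-regularity of $\vartheta$: a priori the variational inequality only produces $\vartheta$ as a nonnegative functional on $H^1(\Omega)$ (hence at best a measure), and the pointwise relations in \eqref{2.30e} only make sense after $\vartheta\in L^2(\Omega)$ is established. Here the standing assumption $f\in L^p(\Omega)$, $p>2$, together with the embedding $H^1(\Omega)\hookrightarrow L^q(\Omega)$ for every $q<\infty$ (in two dimensions) and $u,p\in H^1(\Omega)$, first ensures $(u-f)p\in L^2(\Omega)$; then, recognising the variational inequality as the optimality condition of the obstacle problem $\min_{\mu\in\iV}\tfrac\beta2\|\mu\|_{H^1(\Omega)}^2+\int_\Omega(u-f)p\,\mu\,dx$ with $L^2$ right-hand side, I would derive the claimed regularity either from elliptic regularity for that obstacle problem or, in the spirit of \cite{Juan_Carlos_Carolina}, via a Moreau--Yosida penalization of the constraint $\lambda\ge0$, a uniform $L^2$-bound on the penalized multipliers, and a passage to the limit.
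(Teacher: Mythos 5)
Your proposal is correct and follows essentially the same route as the paper: pass to the reduced functional, write the variational inequality over $\iV$, introduce the adjoint state $p$ to express $\mathcal J'(\lambda)$ explicitly, identify $\vartheta=-\beta\Delta\lambda+\beta\lambda+(u-f)p$, and obtain the pointwise complementarity from the cone structure. The $L^2$-regularity issue you flag is resolved in the paper exactly by your first suggested option, namely $H^2$-regularity for the resulting obstacle-type problem (citing Tr\"oltzsch), which makes $\vartheta\in L^2(\Omega)$ and legitimizes the a.e.\ relations in \eqref{2.30e}.
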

\begin{proof}
Since the solution operator is differentiable, it follows, using the reduced cost functional
\begin{equation} \label{eq: VI existence proof of multipliers}
\mathcal J(\lambda)= \|u(\lambda)- u^\dag\|^2_{L^2} + \beta \|\lambda\|^2_{H^1(\Omega)},
\end{equation}
that
\begin{equation}
\mathcal{J}'(\lambda)(\xi- \lambda)= (u(\lambda)- u^\dag,u'(\lambda)(\xi-\lambda))+ \beta (\lambda, \xi- \lambda)_{H^1} \geq 0, \qquad \forall \xi \in \iV.
\end{equation}
Introducing $p \in H_0^1(\Omega)$ as the unique weak solution of the adjoint equations \eqref{2.30b}-\eqref{2.30b1} and using the linearised equation \eqref{equ_g_gamma_d}, we obtain that
\begin{align*}
2(u- u^\dag, u')&= - \mu (Dp, D u')-\int_\Omega h_\gamma'(Du)^*Dp \cdot Du' \, dx-2 \int_\Omega \lambda u' p \, dx\\
&= 2 \int_\Omega p (u-f) (\xi-\lambda) \, dx,
\end{align*}
where we used the notation $u':=u'(\lambda)(\xi-\lambda)$.
Replacing the last term in \eqref{eq: VI existence proof of multipliers}, we get that
\begin{equation} \label{eq: VI2 existence proof of multipliers}
\beta (\lambda, \xi-\lambda)_{H^1}+ \int_\Omega p (u-f) (\xi- \lambda) \, dx \geq 0, \quad \forall \xi \in \iV.
\end{equation}

Inequality \eqref{eq: VI2 existence proof of multipliers} corresponds to an obstacle type problem with unilateral bounds. Thanks to regularity results for this type of problems (see \cite[Thm.5.2, ~p.294]{Tro87}), it follows that $\lambda \in H^2(\Omega)$ (if $f \in L^p(\Omega)$ for some $p>2$) and, therefore, we may define
$$\vartheta := - \beta \Delta  \lambda  + \beta \lambda + (u - f) p \in L^2(\Omega).$$
Integrating by parts in \eqref{eq: VI2 existence proof of multipliers} we then obtain that $\big(\vartheta, \xi - \lambda\big)_{L^2}\geq 0$. From the latter and the sign of $\lambda$, we finally get that
\begin{align}
\lambda \geq 0,\quad \vartheta \geq 0,\quad \vartheta ~ \lambda  &= 0\quad  a.e.\quad \Omega.\label{ieq_2.5b}
\end{align}
\end{proof}

\begin{remark} \label{rem: extra regularity of adjoint}
If $u^\dag \in L^\infty(\Omega)$ and $u(\lambda) \in C^{1,\beta}(\Omega)$, it follows from elliptic regularity theory (see, e.g., \cite{Tro87}) that the adjoint state has the extra regularity $p \in W^{1,q}(\Omega)$, for all $q \in (2,+\infty)$, and
\begin{equation} \label{eq: W1r a priori estimate adjoint state}
\| p \|_{W^{1,q}} \leq C_q \|u-u^\dag\|_{L^\infty}.
\end{equation}
\end{remark}

The complementarity condition (\ref{ieq_2.5b}) can also be reformulated as the following nonsmooth equation:
$$\vartheta = \max (0, \vartheta - \alpha \lambda ), \text{ for any }\alpha > 0,$$
where the $\max$ operation has to be understood in an almost everywhere sense. By choosing $\alpha = \beta$ and using \eqref{2.30c} one gets
\begin{equation}\label{ineq_lambda}
- \beta \Delta  \lambda  + \beta \lambda + (u - f)p - \max ( 0, - \beta \Delta  \lambda  + (u- f) p ) = 0.
\end{equation}
Altogether, we obtain the following system for $y=(u, q, p, z, \lambda)$
\begin{equation}\label{s3.1_strong_form}
F(y) = \left(\begin{array}{c}
- \mu \Delta u - \Div\hspace{0.25em} q + 2\lambda (u - f) \\
h_\gamma(Du) - q \\
- \mu \Delta  p - \Div\hspace{0.25em}  z + 2\lambda p + 2(u-u^\dag) \\
h'_\gamma(Du)^*Dp - z\\
- \beta \Delta  \lambda  + \beta \lambda + (u - f)p-\max \big(0, - \beta \Delta  \lambda  + (u - f)p\big)
\end{array} \right)=  0,
\end{equation}
where $F:  V \to  W$ with $V :=  H^1_0(\Omega) \times L^2(\Omega) \times  H^1_0(\Omega) \times L^2(\Omega) \times  H^1(\Omega)$ and $W :=  H^{-1}(\Omega) \times L^2(\Omega) \times H^{-1}(\Omega) \times L^2(\Omega) \times  H^{1}(\Omega)'$. The last equation in \eqref{s3.1_strong_form} is complemented with homogeneous Neumann boundary condition for $\lambda$.\\

As mentioned previously, sufficient optimality conditions are important in order to verify that a given stationary point is indeed a minimizer of the original optimization problem. Thanks to the differentiability properties of the solution mapping (see Theorem \ref{second_frechet_diff_G}), we can derive a second-order sufficient optimality condition. To state it, let us start by computing the second derivatives of $J(u, \lambda)$ and the state equation operator $e(u, \lambda)$ defined in \eqref{eq00_1b}. For $(u, \lambda)\in H_0^1(\Omega)\times H^1(\Omega)$ and for all $w, \eta \in H^1_0(\Omega), l\in H^1(\Omega)$, we have:
\begin{subequations}\label{2nd_de_JE}
\begin{align}
& e_{\lambda\lambda}(u, \lambda)  = 0\label{2nd_de_c}\\
& \langle e_{u\lambda}(u, \lambda)[w,l], v \rangle_{H^{-1}, H^1_0} =2\int_\Omega wlv \, dx \quad \forall v\in H^1_0(\Omega)\\
& \langle e_{uu}(u, \lambda)[w,\eta], v \rangle_{H^{-1}, H^1_0} = \int_\Omega h''(Du)[Dw,D \eta] \cdot Dv \, dx, \quad \forall v\in H^1_0(\Omega).
\end{align}
\end{subequations}
Note that for any fixed $\lambda \in H^1(\Omega)$ and $u \in H^1_0(\Omega)$, we also get
\begin{equation}\label{eq2.5}
\langle e_u(u, \lambda)w, v \rangle_{H^{-1}, H^1_0} = \mu \big(Dw, Dv\big)_{L^2} + \big(h'_\gamma(Du) Dw, Dv\big)_{L^2} + 2\int_\Omega{\lambda w v} \, dx,
\end{equation}
for all $v\in  H^1_0(\Omega)$. Now let $a := 1 - \frac{1}{2\gamma}$ and $b := 1 + \frac{1}{2\gamma}$,  and let us introduce the sets
\begin{equation}\label{s3.set}
\begin{aligned}
\iA^\gamma(u) &:= \big\{\in \Omega: \gamma |Du(x)| \geq b\big\},\\
\iS^\gamma(u) &:= \big\{x \in \Omega: a < \gamma |Du(x)| < b\big\},\\
\iI^\gamma(u) &:= \big\{x \in \Omega: \gamma |Du(x)|\leq a\big\},
\end{aligned}
\end{equation}
and $t_1(u):=\frac{\gamma}{2}\big(\gamma |Du| - a\big) = \frac{\gamma}{2}\big(\gamma |Du| - 1 + \frac{1}{2\gamma}\big)$; $t_2(u) = \gamma |Du| -1 - \frac{3}{2\gamma}$. For all $z \in H^1_0(\Omega)$, we get the following expressions for the derivatives of $h_\gamma$:
\begin{equation}\label{s3.3_deriva_h}
\begin{aligned}
h'_\gamma(Du) Dz &=
\chi_{\iA^\gamma(u)}\bigg\{\dfrac{Dz}{|Du|} - \dfrac{\lag Du, Dz\rag}{|Du|^3}Du\bigg\} +\gamma \chi_{\iI^\gamma(u)}\big(Dz\big)\\
 &+ \chi_{\iS^\gamma(u)}\bigg\{\frac{\gamma}{2} Dz + \gamma^2 \big(\gamma|Du| -1\big)\bigg[ 2\gamma^2 t_1(u)t_2(u) -1\bigg] \frac{\lag Du, Dz\rag}{|Du|^2} Du \\
&+ \bigg[\dfrac{2\gamma-1}{4\gamma} - \dfrac{\gamma t_1(u)t_2(u)}{2} +\dfrac{\gamma^3t^2_1(u)t^2_2(u)}{2}\bigg]
\bigg( \frac{Dz}{|Du|} - \frac{\lag Du, Dz\rag}{|Du|^3} Du\bigg)\bigg\} \\
\end{aligned}
\end{equation}
and
\begin{equation}\label{s3.4_deriva_h_u}
\begin{aligned}
h''_\gamma(Du)[Dp,Dz] & = \chi_{\iA^\gamma(u)}\Phi(Du, Dp) Dz \\
+ \chi_{\iS^\gamma(u)}&\bigg\{\bigg[\frac{\gamma}{2}t_1(u)t_2(u)\bigg( 4\gamma^3 |Du| \big(\gamma|Du| - 1\big)- \gamma^2t_1(u)t_2(u) + 1 \bigg)  \\
&- \bigg(\gamma^3 |Du|^2 - \gamma^2|Du| +\frac{1}{2} - \frac{1}{4\gamma}\bigg)\bigg]\Phi(Du, Dp)Dz  \\
&+6\gamma^5  t_1(u)t_2(u)\frac{\lag Du, Dp\rag (Du Du^T)}{|Du|^3} Dz \bigg\},
\end{aligned}
\end{equation}
with the operator $$\Phi(Du, Dp):= \dfrac{3\lag Du, Dp\rag(Du Du^T)}{|Du|^5}-\dfrac{(Dp Du^T)}{|Du|^3} - \dfrac{(Du Dp^T)}{|Du|^3} - \dfrac{\lag Du, Dp\rag}{|Du|^3}.$$

We also define the cone of critical directions by
\begin{equation}\label{def_Kappa}
\iK(\lambda^*) = \left\{l\in H^1(\Omega): l(x)
\begin{cases}
=0\quad\mbox{if}\quad \vartheta(x) \not= 0\\
\geq 0 \quad\mbox{if}\quad \vartheta(x) = 0 \quad\mbox{and}\quad \lambda^*(x) =0
\end{cases}
\right\}.
\end{equation}
Now let us state the second order optimality condition for problem \eqref{eq00_1}. The proof goes along the lines of \cite{Juan_Carlos_Kunisch, Juan_Carlos_Kunisch2}. However, since in our case the control enters in a bilinear way and the PDE has a quasilinear structure, the proof has to be modified accordingly.
\begin{theorem}\label{lem_2nd_order}
Under the same hypotheses of Theorem \ref{second_frechet_diff_G}, let $(u^*, \lambda^*, p^*)$ be a solution of the optimality system \eqref{eq2.30} and suppose that there exists $\rho > 0$ such that
\begin{equation}\label{2nd_order}2\|w\|^2_{L^2} + 2\beta\|l\|^2_{H^1} + \int_\Omega h''(Du^*)[Dw]^2 \cdot Dp^* \, dx+ 4\int_\Omega wlp^* \, dx \geq \rho \|l\|^2_{H^1},
\end{equation}
for every pair $(w, l)\in H^1_0(\Omega)\times \iK(\lambda^*)$, $(w, l)\not= (0, 0)$ which satisfies the linearized equation:
\begin{equation}\label{2nd_cond}\mu\big(Dw, Dv\big)_{L^2} + \big(h'_\gamma(Du^*)Dw, Dv\big)_{L^2} + 2\int_\Omega l(u^*-f)v \, dx + 2\int_\Omega \lambda^* wv \, dx = 0, \forall v \in V.
\end{equation}
Then there exist $\sigma > 0$ and $\tau > 0$ such that
\begin{equation} \label{eq:ssc growth condition}
J(u^*, \lambda^*) + \tau \|\lambda - \lambda^*\|^2_{H^1} \leq J(u, \lambda),
\end{equation}
for every feasible pair $(u, \lambda)$ such that $u = G (\lambda)$ and $\|\lambda - \lambda^*\|_{H^1}\leq \sigma$.
\end{theorem}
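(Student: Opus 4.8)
\emph{Strategy.} The plan is a contradiction argument in the spirit of \cite{Juan_Carlos_Kunisch, Juan_Carlos_Kunisch2}, adapted to the bilinear dependence on $\lambda$ and the quasilinear lower level PDE. I would work with the reduced functional $\mathcal{J}(\lambda) = \|G(\lambda)-u^\dag\|_{L^2}^2 + \beta\|\lambda\|_{H^1}^2$, which is twice Fr\'echet differentiable near $\lambda^*$ by Theorem \ref{second_frechet_diff_G}. A short computation --- testing \eqref{eq_2nd_G} (with $\xi=\zeta=l$) against $p^*$, testing the adjoint equation \eqref{2.30b}--\eqref{2.30b1} against $w^{(l,l)}_{\lambda^*}=G''(\lambda^*)[l,l]$, and subtracting, so that the terms involving $h'_\gamma(Du^*)$ and $\lambda^*$ cancel --- shows that $\mathcal{J}''(\lambda^*)[l]^2$ is exactly the left-hand side of \eqref{2nd_order}, with $w:=G'(\lambda^*)l$ the solution of \eqref{2nd_cond}. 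Now suppose \eqref{eq:ssc growth condition} fails: then there are feasible $\lambda_k\in\iV$, $u_k=G(\lambda_k)$, with $\rho_k:=\|\lambda_k-\lambda^*\|_{H^1}\downarrow 0$, $\rho_k>0$, and $\mathcal{J}(\lambda_k) < \mathcal{J}(\lambda^*) + \tfrac1k\rho_k^2$. Put $l_k := (\lambda_k-\lambda^*)/\rho_k$, so $\|l_k\|_{H^1}=1$; along a subsequence $l_k \rightharpoonup l$ in $H^1(\Omega)$ and, by compact embedding, $l_k \to l$ in $L^2(\Omega)$. Set $w_k := G'(\lambda^*)l_k$. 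Since in the linearized equation \eqref{equ_g_gamma_d} the direction enters only through the term $2\int_\Omega l_k(u^*-f)v\,dx$ and $u^*-f\in L^\infty(\Omega)$, subtracting equations and using coercivity together with \eqref{eq: a priori estimates of state and lin. eq} gives $w_k \to w:=G'(\lambda^*)l$ strongly in $H^1_0(\Omega)$, while \eqref{eq: W1r a priori estimate lin. state} keeps $\{w_k\}$ bounded in $W^{1,s}(\Omega)$ for every $s>2$.

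\emph{Identifying the limit.} From $\lambda_k\geq 0$ and $\lambda^*=0$ on $\{\lambda^*=0\}$ we get $l_k\geq 0$ there, hence $l\geq 0$ a.e.\ on $\{\lambda^*=0\}$. Expanding $\mathcal{J}$ to second order at $\lambda^*$ and using that the first-order variational inequality \eqref{eq: VI2 existence proof of multipliers} gives $\mathcal{J}'(\lambda^*)(\lambda_k-\lambda^*)\geq 0$ (valid since $\lambda_k\in\iV$), division by $\rho_k^2$ yields $\limsup_k \mathcal{J}''(\lambda^*)[l_k]^2 \leq 0$; division by $\rho_k$, together with the boundedness of $\mathcal{J}''(\lambda^*)[l_k]^2$, yields $\limsup_k \mathcal{J}'(\lambda^*)(l_k)\leq 0$. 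But $\mathcal{J}'(\lambda^*)(l_k)=(\vartheta,l_k)_{L^2}\geq 0$ (integrating by parts in \eqref{2.30c} with the Neumann condition) and $l_k\to l$ in $L^2(\Omega)$, so $(\vartheta,l)_{L^2}=0$. Combined with $\vartheta\geq 0$, the complementarity $\vartheta\lambda^*=0$, and $l\geq 0$ on $\{\lambda^*=0\}$, this forces $l=0$ a.e.\ on $\{\vartheta\neq 0\}$ and $l\geq 0$ a.e.\ on $\{\vartheta=0\}\cap\{\lambda^*=0\}$, i.e.\ $l\in\iK(\lambda^*)$. Moreover $w=G'(\lambda^*)l$ solves \eqref{2nd_cond} by construction.

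\emph{Passing to the limit.} I would then pass to the limit term-by-term in $\mathcal{J}''(\lambda^*)[l_k]^2$. The term $2\|w_k\|_{L^2}^2\to 2\|w\|_{L^2}^2$; the term $\int_\Omega h''_\gamma(Du^*)[Dw_k]^2\cdot Dp^*\,dx$ converges to the same expression with $w$, because $h''_\gamma$ is bounded (Lemma \ref{characters_h_gamma}), $Dp^*\in L^q(\Omega)$ for every $q<\infty$ (Remark \ref{rem: extra regularity of adjoint}), and $Dw_k\to Dw$ strongly in $L^r(\Omega)$ for $r$ suitably chosen by interpolating the uniform $W^{1,s}$-bound against $H^1_0$-convergence; the term $4\int_\Omega w_k l_k p^*\,dx$ converges because $p^*\in W^{1,q}(\Omega)\hookrightarrow L^\infty(\Omega)$ for $q>2$ in two dimensions and $w_k l_k\to wl$ in $L^1(\Omega)$; and $2\beta\|l_k\|_{H^1}^2$ is weakly lower semicontinuous. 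If $l\neq 0$, then $(w,l)\in H^1_0(\Omega)\times\iK(\lambda^*)$, $(w,l)\neq(0,0)$, is admissible in \eqref{2nd_order}, so $\mathcal{J}''(\lambda^*)[l]^2\geq\rho\|l\|_{H^1}^2>0$, whereas lower semicontinuity gives $\mathcal{J}''(\lambda^*)[l]^2\leq\liminf_k\mathcal{J}''(\lambda^*)[l_k]^2\leq 0$, a contradiction. If $l=0$, then $w_k\to 0$ in $H^1_0(\Omega)$, so all terms of $\mathcal{J}''(\lambda^*)[l_k]^2$ vanish except $2\beta\|l_k\|_{H^1}^2\equiv 2\beta$, whence $\mathcal{J}''(\lambda^*)[l_k]^2\to 2\beta>0$, again contradicting $\limsup_k\mathcal{J}''(\lambda^*)[l_k]^2\leq 0$. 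Either way we reach a contradiction, so \eqref{eq:ssc growth condition} holds for suitable $\sigma,\tau>0$ (indeed $\tau$ any number below $\rho/2$ and $\sigma$ small enough, reading the contradiction argument as the coercivity estimate $\mathcal{J}''(\lambda^*)[\lambda-\lambda^*]^2\geq\tfrac\rho2\|\lambda-\lambda^*\|_{H^1}^2$ for feasible $\lambda$ near $\lambda^*$).

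\emph{Main obstacle.} The crux is the compactness step: upgrading $l_k\rightharpoonup l$ to $w_k\to w$ in $H^1_0(\Omega)$ \emph{with enough extra integrability} for the genuinely quadratic term $h''_\gamma(Du^*)[Dw_k]^2\cdot Dp^*$ --- which is \emph{not} weakly continuous --- to pass to the limit. This is precisely where the regularity hypotheses of Theorem \ref{second_frechet_diff_G} ($u^*\in C^{1,\beta}$, the enhanced Gr\"oger estimate) and the elliptic-regularity bounds \eqref{eq: W1r a priori estimate lin. state} and \eqref{eq: W1r a priori estimate adjoint state} enter decisively. A secondary subtlety is extracting membership of $l$ in the critical cone $\iK(\lambda^*)$ from the interplay of the first-order variational inequality with the second-order information in the Taylor expansion; the remaining steps are routine.
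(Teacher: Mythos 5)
Your proposal is correct and follows essentially the same route as the paper's proof: a contradiction argument with normalized directions $l_k=(\lambda_k-\lambda^*)/\rho_k$, identification of the weak limit as an element of the critical cone $\iK(\lambda^*)$ via the first-order variational inequality and the complementarity relations, strong convergence of the linearized states, and the same two-case conclusion ($l\neq 0$ contradicts \eqref{2nd_order}, while $l=0$ contradicts $\|l_k\|_{H^1}=1$ through the coercive $2\beta\|l_k\|^2_{H^1}$ term). The only presentational difference is that you expand the reduced functional $\mathcal J$ directly, whereas the paper organizes the same remainder estimates through the Lagrangian $\iL(u,\lambda,p^*)$ with mean-value expansions in $u$ and $\lambda$ separately; the technical ingredients (Lipschitz continuity of $h''_\gamma$, the $W^{1,4}$ bounds from \eqref{eq: enhanced Groeger} and Remark \ref{rem: extra regularity of adjoint}) are identical.
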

\begin{proof}
Suppose that $\lambda^*$ does not satisfy the growth condition \eqref{eq:ssc growth condition}. Then there exists a feasible sequence $\{\lambda_k\}_k\subset H^1(\Omega)$ such that
\begin{align}\label{eq_cons1}
\|\lambda_k - \lambda^*\|_{H^1} &< \frac{1}{k^2}\quad\mbox{and}\\
\label{eq_cons2}
J(u^*, \lambda^*) + \frac{1}{k}\|\lambda - \lambda^*\|^2_{H^1} &> J(u_k, \lambda_k) = \iL(u_k, \lambda_k, p^*)\quad\forall k,
\end{align}
where $u_k = G(\lambda_k)$ and $\iL(u, \lambda, p):= \lag e(u, \lambda), p\rag_{H^{-1},H_0^1} + J(u, \lambda)$. From \eqref{eq: enhanced Groeger} we then get that $u_k \to u^*$ strongly in $W^{1, \hat p},$ with $\hat p >4$. By setting $\rho_k = \|\lambda_k - \lambda^*\|_{H^1}$ and $\zeta_k = \frac{1}{\rho_k}(\lambda_k - \lambda^*)$ it follows that $\|\zeta_k\|_{H^1} = 1$ and therefore we may extract a subsequence, denoted the same, which converges to $\zeta$ weakly in $H^1(\Omega)$.

\noindent \emph{Step 1.} By the mean value theorem we have
\begin{align*}
\iL(u_k, \lambda_k, p^*) + \iL_u(\nu_k, \lambda_k, p^*) (u^* - u_k) &= \iL(u^*, \lambda_k, p^*)\\
&=\iL(u^*, \lambda^*,p^*) + \rho_k \iL_\lambda (u^*, \xi_k, p^*)\zeta_k
\end{align*}
where $\nu_k$, $\xi_k$ are points between $u^*$ and $u_k$, $\lambda^*$ and $\lambda_k$, respectively. From (\ref{eq_cons2}) and $J(u^*, \lambda^*) = \iL(u^*, \lambda^*, p^*)$ it follows that
\begin{equation}\label{eq_04.4}
\iL_\lambda (u^*, \xi_k, p^*)\zeta_k < \frac{1}{k}\|\lambda_k - \lambda^*\|_{H^1} + \frac{1}{\rho_k}\iL_u(\nu_k, \lambda_k, p^*) (u^* - u_k).
\end{equation}
By using again the mean value theorem for the last term on the first variable, we obtain
\begin{align*}\iL_u(\nu_k, \lambda_k, p^*) (u^* - u_k) =& J_u(\nu_k)(u^* - u_k) + \lag p^*, e_u(\nu_k, \lambda_k)(u^* - u_k) \rag_{H^1_0, H^{-1} }\\
=&J_u(\nu_k)(u^* - u_k) + \lag p^*, e_u(u^*, \lambda_k)(u^* - u_k) \rag_{H^1_0, H^{-1} }\\
&+\lag p^*, e_{uu}(u^*,\lambda_k)(\nu_k - u^*)(u^* - u_k) \rag_{H^1_0, H^{-1} }\\
&+\big\lag p^*, \big(e_{uu}(\eta_k, \lambda_k)- e_{uu}(u^*, \lambda_k)\big)(\nu_k - u^*)(u^* - u_k) \big\rag_{H^1_0, H^{-1}},
\end{align*}
where $\eta_k = u^* + t(\nu_k - u^*)$, for some $t\in[0, 1]$. From (\ref{eq2.5}) and the optimality system (\ref{eq2.30}) it follows that
\begin{align*}\lag p^*, e_u(u^*, \lambda_k)(u^* -& u_k) \rag_{H^1_0, H^{-1}}\\
 = &\lag p^*, e_u(u^*, \lambda^*)(u^* - u_k) \rag_{H^1_0, H^{-1}} + 2\int_\Omega(\lambda_k - \lambda^*)(u^* - u_k)p^* \, dx\\
=&-  J_u(u^*) (u^* - u_k)  + 2\int_\Omega(\lambda_k - \lambda^*)(u^* - u_k)p^* \, dx.
\end{align*}
Hence, from the Lipschitz continuity and the boundedness of $h_\gamma''$, and the extra regularity of $p$ (see Remark \ref{rem: extra regularity of adjoint}), we get
\begin{align*}
\big|\iL_u(\nu_k, \lambda_k, p^*) (u^* - u_k)\big| \leq& \|J_u(\nu_k) -  J_u(u^*)\|_{H^{-1}}\|u^* - u_k\|_{H^1_0} \\
&+ 2\|\lambda_k - \lambda^*\|_{L^3}\|u^* - u_k\|_{L^3}\|p^*\|_{L^3} \\
&+ L^\gamma_1 \| p^*\|_{H^1_0} \|\nu_k - u^*\|_{W^{1, \hat p}}\|u^* - u_k\|_{W^{1, \hat p}}\\
 &+ L^\gamma_2 \| p^*\|_{W^{1,4}} \|\nu_k - u^*\|^2_{W^{1, \hat p}} \|u^* - u_k\|_{W^{1, \hat p}}.
\end{align*}
Due to the quadratic cost and the convergence $\zeta_k \rightharpoonup \zeta$, $\xi_k \rightarrow \lambda^*$ in $H^1(\Omega)$ and $u_k \rightarrow u^*$ in $W^{1,4}(\Omega)$, from (\ref{eq_04.4}) it follows that
$$\iL_\lambda (u^*,\lambda^*, p^*)\zeta = \underset{k\rightarrow \infty}{\lim}\iL_\lambda(u^*, \xi_k, p^* )\zeta_k \leq 0.$$
On the other hand, since $\lambda_k(x) \geq 0$ a.e in $\Omega$, it follows that
\begin{equation}\label{eq_4.6}
\iL_\lambda (u^*, \lambda^*, p^*)\zeta_k = \rho_k\iL_\lambda (u^*, \lambda^*, p^*)(\lambda_k - \lambda^*)\geq 0.
\end{equation}
Since $\zeta_k \rightharpoonup \zeta$ one gets $\iL_\lambda (u^*, \lambda^*, p^*)\zeta = \underset{k\rightarrow \infty}{\lim} \iL_\lambda (u^*, \lambda^*, p^*)\zeta_k \geq 0$. Altogether we obtain that $\iL_\lambda (u^*, \lambda^*, p^*)\zeta = 0$.\\

\noindent \emph{Step 2.} Now we will show that $\zeta \in \iK(\lambda^*)$. The set
$$\left\{v\in H^1(\Omega): v(x)\geq 0 \quad\mbox{if}\quad \vartheta(x) = 0 \quad\mbox{and}\quad \lambda^*(x) =0\right\}$$
is convex and closed, hence it is weakly sequentially closed. Since $\lambda_k$ is feasible, then for each $k$, $\zeta_k$ belongs to this set and, consequently, $\zeta$ also does. From (\ref{2.30e}) it follows that $\vartheta(x)\zeta(x) \geq 0$ a.e in $\Omega$, which implies
$$
0= \iL_\lambda (u^*, \lambda^*, p^*)\zeta = \beta \big(\lambda^*, \zeta\big)_{ H^1} + \int\limits_\Omega{(u^* - f)p^* \zeta}= \int_\Omega\vartheta \zeta \, dx= \int_\Omega|\vartheta \zeta| \, dx.
$$
It follows that $\zeta(x) = 0$ if $\vartheta (x) \not=0$ and therefore $\zeta \in \iK(\lambda^*)$.\\

\noindent \emph{Step 3 ($\zeta = 0$).} Using a Taylor expansion of the Lagrangian $\iL$ at $(u^*, \lambda^*, p^*)$ we have
\begin{equation}\begin{aligned}\iL(u_k, \lambda_k, p^*) = &\iL(u^*, \lambda^*, p^*) + \rho_k\iL_\lambda (u^*, \lambda^*, p^*)\zeta_k + \frac{\rho_k^2}{2}\iL_{\lambda\lambda}(u^*, \lambda^*, p^*)\zeta_k^2 \\
&+\rho_k\iL_{u\lambda}(u^*, \lambda^*, p^*)(u_k - u^*)\zeta_k + \frac{1}{2}\iL_{uu}(\nu_k, \lambda^*, p^*)(u_k - u^*)^2,
\end{aligned}\end{equation}
where $\nu_k$ is an intermediate point between $u_k$ and $u^*$. Therefore, thanks to the bilinear control structure,
\begin{equation}\label{eq_2.21}\begin{aligned}\rho_k\iL_\lambda (u^*, \lambda^*, p^*)\zeta_k &+ \frac{\rho_k^2}{2}\iL_{\lambda\lambda}(u^*, \lambda^*, p^*)\zeta_k^2 +\rho_k\iL_{u\lambda}(u^*, \lambda^*, p^*)(u_k - u^*)\zeta_k \\
&+ \frac{\rho_k^2}{2}\iL_{uu}(u^*, \lambda^*, p^*)\bigg(\frac{u_k - u^*}{\rho_k}\bigg)^2\\
= \iL(u_k, \lambda_k, p^*) &- \iL(u^*, \lambda^*, p^*)\\
 &+ \frac{\rho_k^2}{2}\bigg[\iL_{uu}(u^*, \lambda^*, p^*) - \iL_{uu}(\nu_k, \lambda^*, p^*)\bigg]\bigg(\frac{u_k-u^*}{\rho_k}\bigg)^2.
\end{aligned}\end{equation}
Moreover, from (\ref{eq_cons2}) it follows that
\begin{equation}\label{eq_2_21}
\iL(u_k, \lambda_k, p^*) - \iL(u^*, \lambda^*, p^*) < \dfrac{\rho_k^2}{k}.\end{equation}
From the properties of $G$, we have that $\|\frac{u_k - u^*}{\rho_k}\|_{W^{1,4}} = \frac{\|G(\lambda_k) - G(\lambda^*)\|_{W^{1,4}}}{\|\lambda_k - \lambda^*\|_{H^1}}$ is bounded. Hence, from $\lambda_k \rightarrow \lambda^*$, $\|\zeta_k\|_{H^1} = 1$ and by \eqref{eq: enhanced Groeger} we obtain
\begin{equation}
\begin{aligned}\bigg| \bigg[&\iL_{uu}(u^*, \lambda^*, p^*) - \iL_{uu}(\nu_k, \lambda^*, p^*)\bigg]\bigg(\frac{u_k-u^*}{\rho_k}\bigg)^2 \bigg|\\
 &\leq L^\gamma_2 \|p^*\|_{W^{1,4}} \|u^*-u_k\|_{W^{1,4}} \bigg\|\frac{u_k - u^*}{\rho_k}\bigg\|^2_{W^{1,4}} \overset{k \rightarrow \infty}{\longrightarrow} 0.
\end{aligned}
\end{equation}
From (\ref{eq_2.21}) it follows that
\begin{multline*}\underset{k\rightarrow \infty}{\lim}\inf \iL_{\lambda\lambda}(u^*, \lambda^*, p^*)\zeta_k^2 + \underset{k\rightarrow \infty}{\lim}\inf \iL_{uu}(u^*, \lambda^*, p^*)\bigg(\frac{u_k-u^*}{\rho_k}\bigg)^2\\ +2 \underset{k\rightarrow \infty}{\lim}\inf \frac{1}{\rho_k}\iL_{u\lambda}(u^*, \lambda^*, p^*)(u_k - u^*)\zeta_k\\
 \leq 2 \underset{k\rightarrow \infty}{\lim}\sup\frac{1}{\rho_k^2}\big[\iL(u_k, \lambda_k, p^*) - \iL(u^*, \lambda^*, p^*)\big] - 2 \underset{k\rightarrow \infty}{\lim}\inf \frac{1}{\rho_k} \iL_\lambda(u^*, \lambda^*, p^*)\zeta_k.
\end{multline*}
Since $\iL_{\lambda\lambda}(u^*, \lambda^*, p^*)\zeta_k^2 = 2\beta\|\zeta_k\|^2_{H^1}$ is weakly lower semi-continuous and from (\ref{eq_4.6}), (\ref{eq_2_21}), the last relation implies
\begin{equation}\label{eq_2.25}
\begin{aligned}
\iL_{\lambda\lambda}(u^*, \lambda^*, p^*)\zeta^2& + \underset{k\rightarrow \infty}{\lim}\inf \iL_{uu}(u^*, \lambda^*, p^*)\bigg(\frac{u_k-u^*}{\rho_k}\bigg)^2\\& + 2 \underset{k\rightarrow \infty}{\lim}\inf \iL_{u\lambda}(u^*, \lambda^*, p^*)\bigg(\frac{u_k - u^*}{\rho_k}\bigg)\zeta_k \leq 2\underset{k\rightarrow \infty}{\lim} \frac{1}{k} = 0.
\end{aligned}
\end{equation}
Let us denote by $\vartheta_{\zeta_k}$ the solution of (\ref{2nd_cond}) associated with $\zeta_k$. Since $\zeta_k \rightharpoonup \zeta$ in $H^1(\Omega)$ and $\|\zeta_k\|_{H^1} = 1$ one gets that $\zeta_k \rightarrow \zeta$ in $L^p(\Omega)$, for all $p \in [1,\infty)$. Hence, from the linearized equation and the continuous invertibility of $e_u(u^*, \lambda^*)$, we have $\vartheta_{\zeta_k} \rightarrow \vartheta_\zeta$ in $W^{1,4}(\Omega)$.

Besides,
\begin{align*}
 &\iL_{uu}(u^*, \lambda^*, p^*)\bigg(\frac{u_k-u^*}{\rho_k}\bigg)^2 =  \iL_{uu}(u^*, \lambda^*, p^*)\bigg(\frac{G(\lambda_k)-G(\lambda^*)}{\|\lambda_k-\lambda^*\|_{H^1}} - \vartheta_{\zeta_k}\bigg)^2 \\
 &+ 2\iL_{uu}(u^*, \lambda^*, p^*)\bigg(\frac{G(\lambda_k)-G(\lambda^*)}{\|\lambda_k-\lambda^*\|_{H^1}} - \vartheta_{\zeta_k}, \vartheta_{\zeta_k}\bigg) + \iL_{uu}(u^*, \lambda^*, p^*)(\vartheta_{\zeta_k})^2
\end{align*}
and
\begin{align*}
 \iL_{u\lambda}(u^*, \lambda^*, p^*)\bigg(\frac{u_k - u^*}{\rho_k}\bigg)\zeta_k = &\iL_{u\lambda}(u^*, \lambda^*, p^*)\bigg(\frac{G(\lambda_k)-G(\lambda^*)}{\|\lambda_k-\lambda^*\|_{H^1}} - \vartheta_{\zeta_k}\bigg)\zeta_k\\
 & + \iL_{u\lambda}(u^*, \lambda^*, p^*)(\vartheta_{\zeta_k},\zeta_k).
\end{align*}
Note that $\vartheta_{\zeta_k}$ also corresponds to the derivative of the control-to-state mapping $G$ at $\lambda^*$ in direction $\zeta_k$. From the differentiability of $G$, it follows that $\frac{G(\lambda_k)-G(\lambda^*)}{\|\lambda_k-\lambda^*\|_{H^1}} - \vartheta_{\zeta_k} \underset{k\rightarrow \infty}{\longrightarrow} 0$ in $W^{1,4}(\Omega)$. Due to the continuity of the bilinear form $\iL_{uu}(u^*, \lambda^*, p^*)$, since $\vartheta_{\zeta_k} \rightarrow \vartheta_\zeta$ and from (\ref{2.30c}-\ref{2.30e}),
we get
\begin{align*}\iL_{\lambda\lambda}(u^*, \lambda^*, p^*)\zeta^2  + 2  \iL_{u\lambda}(u^*, \lambda^*, p^*)(\vartheta_{\zeta},\zeta) + \iL_{uu}(u^*, \lambda^*, p^*) \vartheta_\zeta^2 \leq 2\underset{k\rightarrow \infty}{\lim} \frac{1}{k} = 0.
\end{align*}
Since $\zeta \in \mathcal K(\lambda^*)$, from (\ref{2nd_order}) it then follows that $(\zeta, \vartheta_\zeta)=0$.\\

\noindent \emph{Step 4:} Finally, from $\vartheta_{\zeta_k} \rightarrow \vartheta_\zeta =0$, (\ref{2nd_order}), (\ref{eq_4.6}), (\ref{eq_2_21}) we have
$$\underset{k\rightarrow \infty}{\lim}\sup \rho \|\zeta_k\|_{H^1}^2 \leq \underset{k\rightarrow \infty}{\lim}\sup  \iL_{\lambda\lambda}(u^*, \lambda^*, p^*)\zeta_k^2 \leq 2\underset{k\rightarrow \infty}{\lim}  \frac{1}{k} = 0.$$
Hence, $\zeta_k \rightarrow 0$ in $H^1(\Omega)$, which is in contradiction to $\|\zeta_k\|_{H^1} = 1$.
\end{proof}

\section{Discretization and numerical treatment} \label{secreferences}
\setcounter{lemma}{0}
\setcounter{theorem}{0}
\setcounter{corollary}{0}

In this section we present a numerical strategy for the solution of the optimality system \eqref{s3.1_strong_form}. We start by explaining how the domain is discretized using finite differences and introduce the resulting discrete operators. Due to the size of the problem, an overlapping Schwarz domain decomposition strategy is considered, where the transmission conditions between subdomains are determined in an optimized way. The resulting subdomain finite-dimensional nonlinear systems are then solved by using a semismooth Newton method, for which local superlinear convergence is verified. A further modification of the semismooth Newton algorithm is introduced in order to get a global convergent behaviour.

\subsection{Discretization schemes}
For the image domain, we use a finite differences scheme on a uniform mesh and consider the problem (\ref{s3.1_strong_form}) on the domain $\Omega:= [0, (m-1)h]\times [0, (l-1)h]$, where $h$ denotes the mesh step size, and $w, l\in\IN^*$ depend on the resolution of the input data. In practice, $m$ and $l$ are width and length of the input images $f, u^\dag$ in pixels. In what follows, the notation $\mathbf{u}, \mathbf{q}, \mathbf{p}, \mathbf{z}, \boldsymbol{\lambda}$ is used for the discretized variables that approximate $u, q, p, z, \lambda$ and $F_h$, $\Div_h$, $\Delta_h$ are used for the discrete approximations of $F, \Div, \Delta$, respectively.

In order to approximate the state and adjoint variables, as well as their derivatives, we consider a modified finite differences scheme (see \cite{Muravleva2008}). We define the following grid domains:
\begin{align*}
\Omega_h &= \{x_{ij}:= ((i-1)h, (j-1)h)| i=1,\ldots,m; j=1,\ldots,l\},\\
\Omega^1_h &= \{x_{ij}:= ((i-0.5)h, (j-1)h)| i=1,\ldots,m; j=1,\ldots,l\},\\
\Omega^2_h &= \{x_{ij}:= ((i-1)h, (j-0.5)h)| i=1,\ldots,m; j=1,\ldots,l\}.
\end{align*}
and the corresponding spaces of grid functions:
\begin{align*}
U_h &= \{\mathbf u_{ij}:=u(x_{ij})| x_{ij}\in \Omega_h;\quad u_{i0}=u_{0j}=0; \quad 1\leq i \leq m,\quad 1\leq j \leq l\},\\
\Lambda_h &= \{\boldsymbol \lambda_{ij}:=\lambda(x_{ij})| x_{ij}\in \Omega_h; \quad 1\leq i \leq m,\quad 1\leq j \leq l\},\\
D^1_u &= \{\mathbf v_{ij}:=v(x_{ij})| x_{ij}\in \Omega^1_h; \quad 1\leq i \leq m,\quad 1\leq j < l\},\\
D^2_u &= \{\mathbf w_{ij}:=w(x_{ij})| x_{ij}\in \Omega^2_h; \quad 1\leq i < m,\quad 1\leq j \leq l\}.
\end{align*}
Therefore, $\mathbf{u}, \mathbf{p}\in U_h$, $\boldsymbol{\lambda}\in \Lambda_h$ and $\mathbf{q}, \mathbf{z}\in D^1_u \times D^2_u$. We define the operator $D_h$ as follows:
$$D_h:\Lambda_h \longrightarrow D^1_u \times D^2_u, \quad (D_h \mathbf v)_{i, j} = \big((D_{h_{x_1}} \mathbf v)_{i, j},  (D_{h_{x_2}} \mathbf v)_{i, j}\big)$$
where $D_{h_{x_1}}$ and $D_{h_{x_2}}$ are computed by forward differences of the ``inner points''
 $$(D_{h_{x_1}} \mathbf v)_{i, j} := \frac{\mathbf v_{i+1, j} - \mathbf v_{i, j}}{h},\quad (D_{h_{x_2}} \mathbf v)_{i, j} := \frac{\mathbf v_{i, j+1} - \mathbf v_{i, j}}{h}, \quad 1 \leq i < m-1, 1 \leq j < l-1.$$
The discrete Laplacian $\Delta_h: \Lambda_h \rightarrow \Lambda_h$ is computed by using a classical five point stencil. For the homogeneous Neumann boundary conditions for $u, p$ and $\lambda$ we get
$$
\begin{aligned}
\mathbf{u}_{0, j} &= \mathbf{u}_{2, j}, \quad \mathbf{u}_{m+1, j} = \mathbf{u}_{m-1, j}\quad (1\leq j \leq l);\quad \mathbf{u}_{i, 2} = \mathbf{u}_{i, 0}, \quad \mathbf{u}_{i, l+1} = \mathbf{u}_{i, l-1}\quad (1\leq i \leq m)\\
\mathbf{p}_{0, j} &= \mathbf{p}_{2, j}, \quad \mathbf{p}_{m+1, j} = \mathbf{p}_{m-1, j}\quad (1\leq j \leq l);\quad \mathbf{p}_{i, 2} = \mathbf{p}_{i, 0}, \quad \mathbf{p}_{i, l+1} = \mathbf{p}_{i, l-1}\quad (1\leq i \leq m)\\
\boldsymbol{\lambda}_{0, j} &= \boldsymbol{\lambda}_{2, j}, \quad \boldsymbol{\lambda}_{m+1, j} = \boldsymbol{\lambda}_{m-1, j}\quad (1\leq j \leq l);\quad \boldsymbol{\lambda}_{i, 2} = \boldsymbol{\lambda}_{i, 0}, \quad \boldsymbol{\lambda}_{i, l+1} = \boldsymbol{\lambda}_{i, l-1}\quad (1\leq i \leq m).
\end{aligned}
$$
The discrete divergence operator $\Div_h : D^1_u \times D^2_u \rightarrow U_h$ is computed by using backward differences on $\mathbf{q} = (\mathbf{q}^1, \mathbf{q}^2) \in D^1_u \times D^2_u$
 $$(\Div_h \mathbf{q})_{i, j} = \frac{\mathbf{q}^1_{i, j} - \mathbf{q}^1_{i-1, j}}{h} + \frac{\mathbf{q}^2_{i, j} - \mathbf{q}^2_{i, j-1}}{h}.$$

Accordingly, we define the approximation operator $F_h: H_h\rightarrow H'_h$, where $H_h = U_h \times (D^1_u \times D^2_u) \times U_h \times  (D^1_u \times D^2_u)  \times \Lambda_h$ and $H'_h = U_h \times (D^1_u \times D^2_u) \times U_h \times  (D^1_u \times D^2_u)  \times U_h$, and for $\yy = (\mathbf{u}, \mathbf{q}, \mathbf{p}, \mathbf{z}, \boldsymbol{\lambda}) \in H_h $, we obtain the nonlinear system
\begin{equation}\label{s3.1_strong_form_h}
F_h({\yy}) = \left(\begin{array}{c}
- \mu \Delta_h \mathbf{u} - \Div_h\hspace{0.25em} \mathbf{q} + 2\boldsymbol{\lambda} (\mathbf{u} - \mathbf f) \\
h_\gamma(D_h\mathbf{u}) - \mathbf{q} \\
- \mu \Delta_h  \mathbf{p} - \Div_h\hspace{0.25em} \mathbf{z} + 2\boldsymbol{\lambda} \mathbf{p} + 2(\mathbf{u}-\mathbf u^\dag) \\
h'_\gamma(D_h\mathbf{u})^*D_h\mathbf{p} - \mathbf{z}\\
- \beta \Delta_h  \boldsymbol{\lambda}  + \beta \boldsymbol{\lambda} + (\mathbf{u} -\mathbf f)\mathbf{p}-\max \big(0, - \beta \Delta_h  \boldsymbol{\lambda}  + (\mathbf{u} - \mathbf f) \mathbf{p}\big)
\end{array} \right)=  0.
\end{equation}
Above, we used the notation $\mathbf u \mathbf v$ to represent the grid function $(\mathbf{uv})_{ij} = \mathbf u_{ij} \mathbf v_{ij}$ for all $\mathbf u, \mathbf v \in \Lambda_h$ or $\mathbf u, \mathbf v \in D^k_u$ ($k=1,2$). Hereafter, the notations $\lag\cdot, \cdot\rag$ and $\|\cdot\|$ stand for the Euclidian product and norm in $\RR^n$, respectively. Besides, for $\mathbf q=(\mathbf q^1, \mathbf q^2), \mathbf z= (\mathbf z^1, \mathbf z^2) \in D^1_u \times D^2_u$, we denote $(\mathbf q, \mathbf z)_{D^1_u \times D^2_u}:= \lag \mathbf q^1, \mathbf z^1 \rag + \lag \mathbf q^2, \mathbf z^2 \rag$.

\subsection{Schwarz domain decomposition methods}\label{sec:schwarz}
The nonlinear system \eqref{s3.1_strong_form_h}, arising from the discretization of (\ref{s3.1_strong_form}), is of large scale nature, involving the solution of three coupled PDEs per each training pair of images. Even for the case of a single training pair, this task cannot be performed on a standard desktop computer. In the case of larger training sets, the problem becomes much harder, not to mention the increasingly high resolution of the images at hand.

To tackle this problem, we consider the application of Schwarz domain decomposition methods for solving the resulting optimality system. Since our aim is to set up a parallel method based on domain decomposition, we focus on additive Schwarz methods. Once the domain is decomposed, the nonlinear optimality system is solved in each subdomain.

It is well-known that the convergence rate of the Schwarz method is dependent on the size of the overlapping area. In order to improve the convergence rate, a modified version of the method was proposed in \cite{Grande2006, E_Okyere}. To illustrate the main idea, consider the following coupled linear system with an optimality system type structure:
\begin{align*}
-\Delta u + \eta u &= f + \theta p \quad\mbox{in}\quad \Omega, && u = 0\quad\mbox{on}\quad \partial \Omega,\\
-\Delta p + \eta p &= -(u-u_d)\quad\mbox{in}\quad \Omega, && p = 0\quad\mbox{on}\quad \partial \Omega,
\end{align*}
where $\theta, \eta > 0$. The so-called optimized Schwarz method (with two subdomains) works as follows: For $k \geq 0$ and $i,j\in\{1, 2\}$, $i\not=j$, solve
$$\begin{cases}
-\Delta u_i^{k+1} + \eta u_i^{k+1} = f + \mu p_i^{k}\quad\mbox{in}\quad\Omega_i,\\
u_i^{k+1} \big|_{\partial\Omega} = 0, \quad \big(\alpha_i + \partial_{\vec{n}} \big)u_i^{k+1}\big|_{\Gamma_i} = \big(\alpha_i + \partial_{\vec{n}} \big)u_j^{k}\big|_{\Gamma_i}, \vspace{0.3cm}\\
-\Delta p_i^{k+1} + \eta p_i^{k+1} = -(u_i^{k} - u_d) \quad\mbox{in}\quad\Omega_i;\\
p_i^{k+1}\big|_{\partial\Omega} = 0, \quad \big(\alpha_i + \partial_{\vec{n}}\big)p_i^{k+1}\big|_{\Gamma_i} = \big(\alpha_i + \partial_{\vec{n}} \big)p_j^{k}\big|_{\Gamma_i},
\end{cases}$$
where the transmission parameters $\alpha_1, \alpha_2$ are approximated as follows (by zero order approximations)
$$\alpha_1 = \sqrt{\eta},\quad \alpha_2 = -\sqrt{\eta}.$$
For further details on the obtention of $\alpha_1, \alpha_2$ we refer the reader to \cite{Grande2006,E_Okyere}.

In order to obtain the formulas for the transmission parameters of the optimized Schwarz method for our learning problem, we consider the equations for $u$ and $p$ in the optimality system (in strong form) as a coupled system:
$$\begin{aligned}
- &\mu \Delta u - \Div[h_\gamma(Du)]+ 2\lambda (u - f) =0, \\
- &\mu \Delta  p - \Div [h'_\gamma(Du)^*Dp] + 2\lambda p  + 2(u-u^\dag) =0.
\end{aligned}$$
By skipping the terms involving the regularizing function $h_\gamma$ and its derivative, we get again the linear coupled system as in \cite{E_Okyere}. In addition, we consider the gradient equation
$$ -\beta \Delta  \lambda  + \beta \lambda + (u - f)p=0$$
for the functional parameter $\lambda$. We use the common forms of transmission conditions on $\Gamma_1, \Gamma_2$ in the optimized Schwarz method as follows
\begin{equation}\label{s3.11_trans_cond}
\begin{aligned}
\big(\frac{\partial }{\partial \vec{n}} + S_{v_1}^{(u^k, p^k, \lambda^k)}\big)v_1^{k+1} &= \big(\frac{\partial }{\partial \vec{n}} +  S_{v_1}^{(u^k, p^k, \lambda^k)}\big)v_2^k \quad\mbox{on}\quad \Gamma_1;\\
\big(\frac{\partial }{\partial \vec{n}} + S_{v_2}^{(u^k, p^k, \lambda^k)}\big)v_2^{k+1} &= \big(\frac{\partial }{\partial \vec{n}} + S_{v_2}^{(u^k, p^k, \lambda^k)}\big)v_1^k \quad\mbox{on}\quad \Gamma_2,
\end{aligned}
\end{equation}
where the transmission parameters are chosen in a similar way as for the coupled system above (see \cite{E_Okyere}):
$$\begin{aligned}
S_{u_1}^{(u^k, p^k, \lambda^k)} = S_{p_1}^{(u^k, p^k, \lambda^k)}&= \sqrt{\frac{2\lambda_1^n}{\mu}},\quad S_{u_2}^{(u^k, p^k, \lambda^k)} = S_{p_2}^{(u^k, p^k, \lambda^k)} =
 -\sqrt{\frac{2\lambda_2^k}{\mu} } ,\\
S_{\lambda_1}^{(u^k, p^k, \lambda^k)} &= 1,\quad S_{\lambda_2}^{(u^k, p^k, \lambda^k)} = -1.
\end{aligned}$$
Although this choice is merely heuristic, obtained by dismissing the importance of the nonlinear terms, the experimental results are promissing (see Section \ref{sec:Computational experiments} below). A further investigation on the choice of the transmission parameters for optimality systems appears to be of significant interest.

\subsection{Semismooth Newton method}\label{sec:semismooth}
The optimality system \eqref{s3.1_strong_form_h} has a nonlinear nonsmooth structure. Because of this, a Newton method cannot be directly applied. However, the nonsmooth functions involved, in particular the $\max$ operator, have additional properties, which allow to define a generalized Newton step for the solution of the system.

\begin{definition} Let $X, Z$ be Banach spaces and $D\subset X$ be an open set. The mapping $F:D\to Z$ is called Newton differentiable on an open set $U\subset D$ if there exists a mapping $G:U\to \iL(X, Z)$ such that
$$\underset{h\to 0}{\lim}\frac{\|F(x+h) - F(x) - G(x+h)h\|_Z}{\|h\|_X} = 0,\quad h\in X$$
for every $x \in U$. $G$ is called generalized derivative of $F$.
\end{definition}
\noindent We also refer to \cite{Hinter2002, KIto_KKunisch} for a chain rule for Newton differentiable functions.

 \begin{lemma}\label{lm_2.1}Let $F:Y\to Z$ be a Newton differentiable operator with generalized derivative $G$; $y^*$ be a solution of equation $F(y) = 0$ and $U\subset Y$ an open neighborhood containing $y^*$. If for every $y\in U$, $\|[G(y)]^{-1}\|_{\iL(X, Z)}$ is bounded, then the Newton iterations
$$y_{k+1} = y_k - G^{-1}(y_k)F(y_k)$$
converge superlinearly to $y^*$, provided that $\|y_0 - y^*\|_X$ is sufficiently small.
\end{lemma}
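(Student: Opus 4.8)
The plan is to carry out the standard local contraction argument for generalized (semismooth) Newton iterations in Banach space. Set $M := \sup_{y\in U}\|[G(y)]^{-1}\|_{\iL(Z,Y)}$, which is finite by hypothesis, and fix a contraction factor $q\in(0,1)$. Since $F$ is Newton differentiable at $y^*$, there is a radius $\delta>0$ with $B_\delta(y^*)\subset U$ such that
\[
\|F(y^*+h)-F(y^*)-G(y^*+h)h\|_Z \le \tfrac{q}{M}\,\|h\|_Y
\qquad\text{whenever }\|h\|_Y\le\delta .
\]
Because $F(y^*)=0$, the left-hand side is simply $\|F(y^*+h)-G(y^*+h)h\|_Z$, and this is the only consequence of Newton differentiability that will be used.

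Next I would prove local well-definedness together with $q$-linear convergence by induction. Suppose $y_k\in B_\delta(y^*)$; then $y_k\in U$, so $G(y_k)$ is boundedly invertible, the Newton step is well defined, and with $h_k:=y_k-y^*$ the elementary identity
\begin{align*}
y_{k+1}-y^* &= h_k-[G(y_k)]^{-1}F(y_k) = [G(y_k)]^{-1}\big(G(y_k)h_k-F(y_k)\big)\\
&= -[G(y_k)]^{-1}\big(F(y^*+h_k)-F(y^*)-G(y^*+h_k)h_k\big)
\end{align*}
combined with the displayed estimate yields $\|y_{k+1}-y^*\|_Y\le M\cdot\tfrac{q}{M}\|h_k\|_Y=q\,\|y_k-y^*\|_Y$. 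In particular $y_{k+1}\in B_\delta(y^*)$ as well, so once $\|y_0-y^*\|_Y\le\delta$ the entire sequence stays in $B_\delta(y^*)\subset U$ and $\|y_k-y^*\|_Y\le q^k\|y_0-y^*\|_Y\to 0$.

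Finally I would sharpen linear to superlinear convergence by rerunning exactly the same computation, but keeping the Newton-differentiability remainder in its little-$o$ form: given an arbitrary $\eps>0$, choose $\delta_\eps\in(0,\delta]$ such that $\|F(y^*+h)-F(y^*)-G(y^*+h)h\|_Z\le\tfrac{\eps}{M}\|h\|_Y$ for $\|h\|_Y\le\delta_\eps$; since $y_k\to y^*$, eventually $\|y_k-y^*\|_Y\le\delta_\eps$, and then the identity above gives $\|y_{k+1}-y^*\|_Y\le\eps\,\|y_k-y^*\|_Y$. As $\eps>0$ was arbitrary, $\|y_{k+1}-y^*\|_Y/\|y_k-y^*\|_Y\to 0$, i.e. the convergence is $q$-superlinear.

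I do not expect a genuine obstacle here; this is the classical Banach-space semismooth Newton theorem, recorded for later use once the generalized derivative of $F_h$ and the uniform boundedness of its inverse have been verified. The single point that requires care, and which the argument above is designed to handle, is \emph{feasibility of the iteration}: the uniform bound $M$ on $\|[G(y)]^{-1}\|$ is only available on $U$, so one must ensure that no iterate ever leaves $U$; the contraction estimate provides exactly this self-consistency, so no separate stability argument is needed.
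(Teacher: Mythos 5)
Your proof is correct, and it is the classical contraction argument for semismooth Newton methods in Banach spaces; the paper itself does not prove this lemma but quotes it as a known result (it is Theorem~1.1 of Hinterm\"uller--Ito--Kunisch, the reference \cite{Hinter2002} cited just above), and your argument reproduces the standard proof given there: the identity $y_{k+1}-y^*=-[G(y_k)]^{-1}\big(F(y^*+h_k)-F(y^*)-G(y^*+h_k)h_k\big)$ plus the $o(\|h_k\|)$ remainder and the uniform inverse bound. Two minor observations: you correctly read the hypothesis as a \emph{uniform} bound $M=\sup_{y\in U}\|[G(y)]^{-1}\|<\infty$ (pointwise boundedness of each inverse would not suffice, and the uniform reading is the intended one), and you silently fix the paper's notational slip $\iL(X,Z)$ to the correct $\iL(Z,Y)$; neither affects correctness.
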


In particular, it has been proved (see, e.g., \cite{Hinter2002}) that the mapping $\max(0, \cdot):\RR^n\rightarrow \RR^n$ is Newton differentiable with generalized derivative $G_m: \RR^n\rightarrow \iL\big(\RR^n, \RR^n\big)$ given by
$$(G_m(y))_i = \begin{cases}1\quad\mbox{if}\quad y_i > 0,\\  0\quad\mbox{if}\quad y_i \leq 0\end{cases}.$$

The operator $F_h$ in $(\ref{s3.1_strong_form_h})$ is therefore Newton differentiable and its generalized derivative $\iG_F: H_h\mapsto \iL( H_h, H'_h)$ is given by
\begin{multline}\label{s3.4_jacobian}
\iG_{F_h}(\yy)\delta_{\yy} =\\
\left(\begin{array}{c}
(2 {\boldsymbol{\lambda}} \II -\mu \Delta_h)\delta_{\mathbf u}  -\Div_h \delta_{\mathbf q}   + 2( \mathbf u -  \mathbf f)\delta_{\boldsymbol{\lambda}}\\
h'_\gamma(D_h {\mathbf u})D_h\delta_{\mathbf u} -\delta_{\mathbf q} \\
2\delta_{\mathbf u}  + (2 {\boldsymbol{\lambda}} \II -\mu \Delta_h)\delta_{\mathbf p}   -\Div_h\delta_{\mathbf z}    + 2 \mathbf p\delta_{\boldsymbol{\lambda}} \\
h''_\gamma(D_h {\mathbf u})^*[D_h {\mathbf p}, D_h \delta_{\mathbf u}] + h'_\gamma(D_h {\mathbf u})^* D_h \delta_{\mathbf p} -\delta_{\mathbf z} \\
\mathbf p\delta_{\mathbf u} +  (\mathbf {u} - \mathbf {f})\delta_{\mathbf p}  + \beta(\II - \Delta_h) \delta_{\boldsymbol{\lambda}} - G_m\big((\mathbf u -\mathbf f)\mathbf p- \beta \Delta_h  \boldsymbol{\lambda} \big)\big(\mathbf p\delta_{\mathbf u} +  (\mathbf {u} - \mathbf {f})\delta_{\mathbf p}  - \beta\Delta_h \delta_{\boldsymbol{\lambda}}\big)
\end{array} \right)
\end{multline}
where $\delta_{\yy} = (\delta_{\mathbf u}, \delta_{\mathbf q}, \delta_{\mathbf p}, \delta_{\mathbf z}, \delta_{\boldsymbol{\lambda}})$ and $\II$ stands for the identify. The semi-smooth Newton step is then given by
\begin{equation}\label{ssnewton}
\iG_{F_h}(\yy_k)\delta_\yy = -{F_h}(\yy_k),\quad \yy_{k+1} = \yy_k + \delta_\yy,
\end{equation}
where $F$ and $\iG_{F_h}$ are defined in (\ref{s3.1_strong_form_h}) and (\ref{s3.4_jacobian}), respectively.

For the convergence analysis, we assume that there exists an optimal solution $(\mathbf u^*, \boldsymbol{\lambda}^*)\in U_h\times \Lambda_h$, with $\boldsymbol{\lambda}^* \geq 0$ on $\Omega_h$. The second order condition in Theorem \ref{lem_2nd_order} ensures that a solution of the first order system is also solution of the optimization problem. However, to consider the convergence of the semi-smooth Newton method, we need the following stronger assumption: There exists $ \rho >0$ such that
\begin{multline}\label{2nd_order_b}
2\|w\|^2 + \beta (\|l\|^2 + \|D_hl\|^2)  + \langle h''(D_h {\mathbf u}^*)[D_hw]^2, D_h {\mathbf p}^*\rangle + 4\lag w \, l,\mathbf p^*\rag \geq \rho (\|l\|^2 + \|D_hl\|^2),
\end{multline}
for every pair $(w, l)\in U_h \times \Lambda_h$ that satisfies
$$-\mu \Delta_h w - \Div_h \big(h'_\gamma(D_h {\mathbf u}^*)D_hw \big) + 2 l (\mathbf u^*-\mathbf f) + 2 \boldsymbol{\lambda}^* \ w = 0.$$

Now we consider the mapping $e_{\mathbf u}(\mathbf u, \boldsymbol{\lambda}) \in \iL(U_h, U_h')$ defined by
$$e_{\mathbf u}(\mathbf u, \boldsymbol{\lambda})w = -\mu \Delta_h w - \Div_h \big(h'_\gamma(D_h {\mathbf u})D_hw \big) + 2 \boldsymbol{\lambda} \ w, \quad \forall w \in U_h.$$
From the properties of $h_\gamma'$ it can be verified that $\lag e_{\mathbf u}(\mathbf u, \boldsymbol{\lambda})w, w\rag \geq \lag (2\boldsymbol{\lambda} \II -\mu\Delta_h)w, w\rag, \forall w\in U_h$ and, hence, $e_{\mathbf u}(\mathbf u, \boldsymbol{\lambda})$ is invertible. Moreover, for $\mathbf u\in U_h$ and $\boldsymbol{\lambda} \in \tilde \iK$, there exists $C > 0$ (independent of $\bu$ and $\boldsymbol{\lambda}$) such that for every $\xi \in U_h$, the equation
$$e_{\mathbf u}(\mathbf u, \boldsymbol{\lambda})w = -\mu \Delta_h w - \Div_h \big(h'_\gamma(D_h {\mathbf u})D_hw \big) + 2 \boldsymbol{\lambda} \ w = \xi$$
has a unique solution $w \in U_h$ which satisfies $\|w\| \leq C\|\xi\|$.
If a pair $(w, l)\in U_h \times \Lambda_h$ satisfies the equation
$$e_{\mathbf u}(\mathbf u, \boldsymbol{\lambda})w + e_{\lambda} (\mathbf u, \boldsymbol{\lambda}) l = -\mu \Delta_h w - \Div_h \big(h'_\gamma(D_h {\mathbf u})D_hw \big) +  2 \boldsymbol{\lambda} w + 2 (\mathbf u- \mathbf f)\, l= 0, $$
then $\|w\| \leq C_1(\mathbf u, \boldsymbol{\lambda})\|l\|$, where $C_1(\mathbf u, \boldsymbol{\lambda}) > 0$ is dependent on $(\mathbf u, \boldsymbol{\lambda})$. If we only consider $\mathbf u$ in a bounded neighborhood of $\mathbf u^*$, the last estimate yields
\begin{equation} \label{eq: uniform invertibility bound}
\|w\| \leq C_1\|l\|,
\end{equation}
for some $C_1> 0 $ and for all $w\in U_h, l\in \Lambda_h$ satisfying $e_{\mathbf u}(\mathbf u, \boldsymbol{\lambda})w + e_{\lambda} (\mathbf u, \boldsymbol{\lambda}) l =0$.

\begin{theorem}\label{thrm_2.2} If condition $(\ref{2nd_order_b})$ holds, then the semismooth Newton method applied to $(\ref{s3.1_strong_form_h})$, with generalized derivative $\iG_{F_h}$ defined by $(\ref{s3.4_jacobian})$, converges locally superlinearly to a solution $\yy^* = (\mathbf u^*, \mathbf q^*, \mathbf p^*, \mathbf z^*, \boldsymbol{\lambda}^*)$, provided that $\|\yy_0-\yy^*\|$ is sufficiently small.
\end{theorem}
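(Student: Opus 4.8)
The plan is to apply the abstract semismooth Newton convergence result of Lemma \ref{lm_2.1}, so it suffices to verify two things: (i) $F_h$ in \eqref{s3.1_strong_form_h} is Newton differentiable with generalized derivative $\iG_{F_h}$ given by \eqref{s3.4_jacobian}; and (ii) the operators $\iG_{F_h}(\yy)$ are invertible with $\|\iG_{F_h}(\yy)^{-1}\|$ bounded uniformly for $\yy$ in a neighbourhood of $\yy^*$. Point (i) is routine in the finite-dimensional setting: every component of $F_h$ other than the $\max$-term is a composition of the linear operators $\Delta_h,\Div_h,D_h,\II$ with $h_\gamma$ and $h'_\gamma$, which are continuously differentiable by Lemma \ref{characters_h_gamma}, hence Fréchet (thus Newton) differentiable with the blocks that appear in \eqref{s3.4_jacobian}; the fifth component is a $C^1$ map plus $\max(0,\cdot)$ composed with a $C^1$ map, and $\max(0,\cdot)$ is Newton differentiable with generalized derivative $G_m$. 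The sum and chain rules for Newton differentiable maps then give Newton differentiability of $F_h$ with generalized derivative exactly $\iG_{F_h}$.

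The heart of the proof is (ii). First I would reduce the linear system $\iG_{F_h}(\yy)\delta_{\yy}=r$: the second and fourth block rows express $\delta_{\mathbf q}$ and $\delta_{\mathbf z}$ in terms of $\delta_{\mathbf u},\delta_{\mathbf p}$, and the fifth row splits according to the active set $\mathcal A=\mathcal A(\yy):=\{(i,j):((\mathbf u-\mathbf f)\mathbf p-\beta\Delta_h\boldsymbol\lambda)_{ij}>0\}$, becoming $\beta\,\delta_{\boldsymbol\lambda}=r_5$ on $\mathcal A$ and $\mathbf p\delta_{\mathbf u}+(\mathbf u-\mathbf f)\delta_{\mathbf p}+\beta(\II-\Delta_h)\delta_{\boldsymbol\lambda}=r_5$ on $\mathcal I:=\Omega_h\setminus\mathcal A$. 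What remains is a square system in $(\delta_{\mathbf u},\delta_{\mathbf p},\delta_{\boldsymbol\lambda}|_{\mathcal I})$ whose first row is $e_{\mathbf u}(\mathbf u,\boldsymbol\lambda)\delta_{\mathbf u}+2(\mathbf u-\mathbf f)\delta_{\boldsymbol\lambda}=r_1$, whose second row is the discrete adjoint equation perturbed by the $h''_\gamma$-term, and whose third row is the inactive part of row five. Since $e_{\mathbf u}(\mathbf u,\boldsymbol\lambda)$ is invertible with the uniform bound \eqref{eq: uniform invertibility bound}, injectivity of the reduced system is what must be shown. Testing the first reduced row with $\delta_{\mathbf p}$, the second with $\delta_{\mathbf u}$, and the inactive fifth row with $\delta_{\boldsymbol\lambda}$ (legitimate, since $\delta_{\boldsymbol\lambda}$ vanishes on $\mathcal A$), and using the adjoint structure of the blocks together with the symmetry of $h''_\gamma$ in its arguments, all cross terms cancel and one is left with the energy identity
\[
2\|\delta_{\mathbf u}\|^2 + \langle h''_\gamma(D_h\mathbf u)[D_h\delta_{\mathbf u}]^2, D_h\mathbf p\rangle + 4\langle \delta_{\mathbf u}\delta_{\boldsymbol\lambda}, \mathbf p\rangle + 2\beta\big(\|\delta_{\boldsymbol\lambda}\|^2 + \|D_h\delta_{\boldsymbol\lambda}\|^2\big) = 0,
\]
while the pair $(\delta_{\mathbf u},\delta_{\boldsymbol\lambda})$ satisfies the discrete linearized state equation read off from the first row; note that the left-hand side is precisely the quadratic form of \eqref{2nd_order_b}.

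The main obstacle is that this identity is obtained at the running point $\yy$, whereas the coercivity \eqref{2nd_order_b} is posed at the solution $\yy^*$; I would bridge this by a compactness argument. Suppose $\|\iG_{F_h}(\yy)^{-1}\|$ were not bounded near $\yy^*$: then there are $\yy_n\to\yy^*$ and $\delta^n$ with $\|\delta^n\|=1$ and $\iG_{F_h}(\yy_n)\delta^n\to 0$. Since $\mathcal A(\yy_n)$ ranges over the finite power set of $\Omega_h$ and $\delta^n$ lies on a finite-dimensional sphere, pass to a subsequence with $\mathcal A(\yy_n)\equiv\mathcal A$ fixed and $\delta^n\to\delta$, $\|\delta\|=1$. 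Along it the active-set block of $\iG_{F_h}(\yy_n)$ is the constant projection onto $\mathcal A$, and the other blocks depend continuously on $\yy_n$ through $\mathbf u_n,\mathbf p_n,\boldsymbol\lambda_n$ and through $h'_\gamma,h''_\gamma$; hence $\iG_{F_h}(\yy_n)\to M_{\mathcal A}$, the operator obtained by freezing $\yy=\yy^*$ with active set $\mathcal A$, and $M_{\mathcal A}\delta=0$. Repeating the reduction and the energy identity now at $\yy^*$, the pair $(\delta_{\mathbf u},\delta_{\boldsymbol\lambda})$ satisfies the discrete linearized state equation with $\delta_{\boldsymbol\lambda}|_{\mathcal A}=0$, hence $\delta_{\boldsymbol\lambda}\in\Lambda_h$; condition \eqref{2nd_order_b}, which is imposed for \emph{all} such pairs with no restriction to a critical cone, forces the left-hand side of the identity to be at least $\rho(\|\delta_{\boldsymbol\lambda}\|^2+\|D_h\delta_{\boldsymbol\lambda}\|^2)$, and since it vanishes we obtain $\delta_{\boldsymbol\lambda}=0$. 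Then the linearized state equation and invertibility of $e_{\mathbf u}(\mathbf u^*,\boldsymbol\lambda^*)$ give $\delta_{\mathbf u}=0$, hence $\delta_{\mathbf q}=h'_\gamma(D_h\mathbf u^*)D_h\delta_{\mathbf u}=0$; the adjoint block then reads $e_{\mathbf u}(\mathbf u^*,\boldsymbol\lambda^*)\delta_{\mathbf p}=0$, so $\delta_{\mathbf p}=0$ and $\delta_{\mathbf z}=0$. Thus $\delta=0$, contradicting $\|\delta\|=1$. Consequently $\iG_{F_h}(\yy)$ is invertible with $\|\iG_{F_h}(\yy)^{-1}\|\leq C_*$ on some neighbourhood $U$ of $\yy^*$, and Lemma \ref{lm_2.1} yields local superlinear convergence of \eqref{ssnewton}. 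The delicate point throughout is exactly this uniform invertibility: tracking the active-set combinatorics and transferring the coercivity \eqref{2nd_order_b} from $\yy^*$ to the neighbouring generalized Jacobians by passing to the limit.
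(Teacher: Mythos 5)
Your proposal is correct and shares the paper's overall architecture: both verify the hypotheses of Lemma \ref{lm_2.1} by establishing Newton differentiability of $F_h$ (routine, via the chain rule for $\max(0,\cdot)$) and then uniform bounded invertibility of $\iG_{F_h}$ near $\yy^*$, and both perform the identical reduction of the Newton system --- eliminating $\delta_{\mathbf q},\delta_{\mathbf z}$ and splitting the fifth row by the active set --- arriving at the paper's system \eqref{reduce_system}. Where you diverge is in how the uniform bound on the inverse is obtained. The paper proceeds quantitatively: it recognizes \eqref{reduce_system} as the optimality system of the auxiliary quadratic program \eqref{aux_prob}, transfers the coercivity \eqref{2nd_order_b} from $\yy^*$ to a whole neighbourhood by invoking the Lipschitz continuity of $h''_\gamma$ from Lemma \ref{characters_h_gamma} (yielding \eqref{ieq_J_dd_vl}), and then derives an explicit a priori estimate $\|(\delta_{\bu},\delta_{\bl})\|^2\le C\sum_i\|g_i\|^2$ by the same testing you perform plus repeated Young inequalities. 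You instead prove injectivity of the limiting Jacobian via the clean energy identity (which is correct --- I checked that the cross terms $\pm 2\langle(\mathbf u-\mathbf f)\delta_{\bl},\delta_{\bp}\rangle$ do cancel, using the symmetry of $h'_\gamma$ and of $\Delta_h$) and then obtain uniformity by a compactness argument, passing to a subsequence along which the active set is frozen; you correctly note that the limiting operator $M_{\mathcal A}$ may carry an active set different from that of $\yy^*$, and that this is harmless because \eqref{2nd_order_b} imposes no cone restriction on the admissible pairs $(w,l)$. Your route is shorter and avoids all constant-chasing, but it is intrinsically tied to a fixed finite-dimensional discretization and is non-quantitative: it gives no handle on how the neighbourhood of invertibility or the bound $C_*$ depends on $h$, $\gamma$, or the data, whereas the paper's explicit estimates (building on the uniform invertibility bound \eqref{eq: uniform invertibility bound}) at least expose those dependencies, which is relevant to the mesh-independence philosophy advocated in the introduction. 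Both arguments are valid proofs of the theorem as stated.
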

\begin{proof}
At step $k\geq 0$, we denote $A_k:=\{x_{ij}\in \Omega_h: (\mathbf u - \mathbf f)\mathbf p- \beta \Delta_h  \boldsymbol{\lambda} >0\}$ and $I_k := \Omega_h \setminus A$. $F_h^i$ are the components on the right-hand side, $i=1,..,5$. The $5^{th}$ equation of the system (\ref{ssnewton})
can be expressed as
$$\begin{cases}
\chi_{A_k}\beta\delta_{\boldsymbol{\lambda}} = \chi_{A_k}F_h^5\\
\chi_{I_k}\big\{\mathbf p \delta_{\mathbf u} +  (\mathbf {u} - \mathbf{f}) \delta_{\mathbf p}  + \beta(\II - \Delta_h) \delta_{\boldsymbol{\lambda}}\big\}=\chi_{I_k}F_h^5.
\end{cases}$$
Moreover, since from the $2^{nd}$ and $4^{th}$ equations we obtain an explicit expresssion for $\delta_{\mathbf q}$ and $\delta_{\mathbf z}$, respectively, we may write (\ref{ssnewton}) in equivalent form as
\begin{subequations}\label{reduce_system}
\begin{align}
& (2 \boldsymbol{\lambda}_k-\mu \Delta_h )\delta_{\mathbf u} -\Div_h ~h'_\gamma(D_h {\mathbf u}_k)[D_h \delta_{\mathbf u}]  +  2(\mathbf u_k -  \mathbf f) \delta_{\boldsymbol{\lambda}} = g_1, \label{redu_a}\\
&2 \delta_{\mathbf u} - \Div_h ~h''_\gamma(D_h {\mathbf u}_k)^* [D_h {\mathbf p}_k,D_h \delta_{\mathbf u}] \label{redu_b}\\
& \qquad+ (2 \boldsymbol{\lambda}_k-\mu \Delta_h ) \delta_{\mathbf p} - \Div_h h'_\gamma(D_h {\mathbf u}_k)[D_h \delta_{\mathbf p}] + 2\mathbf p_k \delta_{\boldsymbol{\lambda}} = 2g_2,\notag \\
&  \chi_{I_k}\big\{\mathbf p_k \delta_{\mathbf u} +  (\mathbf {u}_k - \mathbf {f})\, \delta_{\mathbf p}  + \beta(\II - \Delta_h) \delta_{\boldsymbol{\lambda}}\big\}=\chi_{I_k} \beta(\II - \Delta_h) g_3,\label{redu_c}\\
&    \chi_{A_k}\delta_{\boldsymbol{\lambda}} = g_4, \label{redu_d}
\end{align}
\end{subequations}
where $g_1 = F_h^1 - \Div_h F_h^2$, $g_2 = \frac{1}{2}(F_h^3- \Div_h F_h^4)$, $g_3 = \beta^{-1}(\II - \Delta_h)^{-1}F_h^5$ and $g_4=-\chi_{A_k} \boldsymbol{\lambda}_k$.

Next, we show that there exists a neighborhood $V(\mathbf u^*, \boldsymbol{\lambda}^*, \mathbf p^*)$ such that with any $(\mathbf u, \boldsymbol{\lambda}, \mathbf p)\in V(\mathbf u^*, \boldsymbol{\lambda}^*, \mathbf p^*)$ the system (\ref{ssnewton}) is solvable for every right-hand side $F_h^i$.
To show the existence and uniqueness of a solution to (\ref{reduce_system}), let us introduce the following auxiliary problem
\begin{equation}\label{aux_prob}
\begin{aligned}&\min \iJ_A (\delta_{\mathbf u}, \delta_{\boldsymbol{\lambda}}) = \|\delta_{\mathbf u} - g_2\|^2 + \beta\|\chi_{I_k}(\delta_{\boldsymbol{\lambda}} - g_3)\|^2 + \beta\|\chi_{I_k}[D_h(\delta_{\boldsymbol{\lambda}} - g_3)]\|^2\\  &\hspace{3.5cm}+ \dfrac{1}{2}\big\lag e_{\mathbf{uu}}[\delta_{\mathbf u}]^2,\mathbf p_k\big\rag + \big\lag e_{\mathbf u \boldsymbol{\lambda}}[\delta_{\mathbf u}, \delta_{\boldsymbol{\lambda}}],\mathbf p_k\big\rag \\
&\mbox{subject to}
\\ &\qquad e_{\mathbf u}(\mathbf u_k,\boldsymbol{\lambda}_k)\delta_{\mathbf u} + e_{\lambda}(\mathbf u_k,\boldsymbol{\lambda}_k)\delta_{\boldsymbol{\lambda}} = g_1,\\
&\qquad \chi_A \delta_{\boldsymbol{\lambda}} =-\chi_A  \boldsymbol{\lambda}_k.
\end{aligned}\end{equation}
It is not difficult to show that (\ref{reduce_system}) corresponds to the optimality condition for problem (\ref{aux_prob}).
Considering the auxiliary Lagrangian
\begin{equation*}
  \begin{aligned}L (\delta_{\mathbf u}, \delta_{\boldsymbol{\lambda}}, \delta_{\mathbf p}, \psi) &= \iJ_A (\delta_{\mathbf u}, \delta_{\boldsymbol{\lambda}}) + \lag \psi, \chi_A (\delta_{\boldsymbol{\lambda}} -  g_4)\rag+ \lag \delta_{\mathbf p},  e_{\mathbf u}(\mathbf u_k,\boldsymbol{\lambda}_k)\delta_{\mathbf u} + e_{\lambda}(\mathbf u_k,\boldsymbol{\lambda}_k)\delta_{\boldsymbol{\lambda}} - g_1\rag,
\end{aligned}
\end{equation*}
it can be verified that its second derivative is given by
\begin{equation}
  L_{(\delta_{\mathbf u}, \delta_{\boldsymbol{\lambda}})}''[v,l]^2 =2\|v\|^2 + 2\beta(\|l\|^2 +\|D_h l\|^2) + \lag h''_\gamma(D_h {\mathbf u})^*[D_hv]^2, D_h {\mathbf p}\rag + 4\lag \mathbf p v,l\rag.
\end{equation}
By Lemma \ref{characters_h_gamma}, it follows that $e_{\mathbf{uu}}(\mathbf u)$ is Lipschitz continuous. Hence, from (\ref{2nd_order_b}) there exists a neighborhood $V(\mathbf u^*, \boldsymbol{\lambda}^*, \mathbf p^*)$ and a constant $\rho >0$, such that for all
$(\mathbf u, \boldsymbol{\lambda}, \mathbf p) \in V(\mathbf u^*, \boldsymbol{\lambda}^*, \mathbf p^*)$,
\begin{equation} \label{ieq_J_dd_vl}
L_{(\delta_{\mathbf u}, \delta_{\boldsymbol{\lambda}})}''[v,l]^2 \geq \frac{\rho}{2} (\|l\|^2 +\|D_h l\|^2),
\end{equation}
for all $(v,l) \in U_h \times \Lambda_h$ satisfying $e_{\mathbf u}(\mathbf u, \boldsymbol{\lambda})v + e_{\lambda}(\mathbf u, \boldsymbol{\lambda})l = 0$.
Therefore, (\ref{aux_prob}) is a linear quadratic optimization problem with convex objective function,
which implies the solvability of (\ref{reduce_system}).

Multiplying equation \eqref{redu_b} by $\delta_u$ we get that
\begin{equation}
\lag h''_\gamma(D_h {\mathbf u}_k)^* [D_h {\mathbf p}_k,D_h \delta_{\mathbf u}], \delta_{\bu} \rag + 2 \|\delta_{\bu}\|^2 + 2 \lag \bp_k \delta_{\bl}, \delta_{\bu} \rag= -\lag e_{\bu}(\bu,\bl) \delta_{\bp}, \delta_{\bu} \rag+2 \lag g_2, \delta_{\bu} \rag.
\end{equation}
Plugging the last equation in the second order condition \eqref{ieq_J_dd_vl} and using \eqref{eq: uniform invertibility bound}, we get that
\begin{equation} \label{eq: ssn conv ssc}
\frac{\rho}{2} \|(\delta_{\bu}, \delta_{\bl})\|^2 \leq 2 \lag \bp_k \delta_{\bl}, \delta_{\bu} \rag + 2 \beta \chi_{I_k} (\|\delta_{\bl}\|^2 + \|D_h \delta_{\bl}\|^2) -\lag e_{\bu}(\bu,\bl) \delta_{\bp}, \delta_{\bu} \rag+2 \lag g_2, \delta_{\bu} \rag.
\end{equation}
On the other hand, multiplying \eqref{redu_c} by $\delta_{\bl}$ we get that
\begin{equation}
  \chi_{I_k} \left( 2 \beta (\|\delta_{\bl}\|^2 + \|D_h \delta_{\bl}\|^2) +  \lag \bp_k \delta_{\bl}, \delta_{\bu} \rag+ 2\lag (\bu_k- \mathbf f)   \delta_{\bp}, \delta_{\bl} \rag \right) \leq C \|g_3\|_{I_k} \|\delta_{\bl}\|_{I_k}.
\end{equation}
Using the latter in \eqref{eq: ssn conv ssc} we then get that
\begin{align}
  \frac{\rho}{2} \|(\delta_{\bu}, \delta_{\bl})\|^2 &\leq 2 \chi_{A_k}  \lag \bp_k \delta_{\bl}, \delta_{\bu} \rag- 2 \chi_{I_k} \lag (\bu_k- \mathbf f) \delta_{\bp}, \delta_{\bl} \rag\\
  & \hspace{1cm}- \lag e_{\bu}(\bu,\bl)^T \delta_{\bp}, \delta_{\bu} \rag+2 \lag g_2, \delta_{\bu} \rag + C \|g_3\|_{I_k} \|\delta_{\bl}\|_{I_k}\\
  &\leq 2 \|\bp_k\|_{A_k} \| \delta_{\bu}\| \|g_4\|+ 2 \|g_2\| \|\delta_{\bu}\| + C \|g_3\|_{I_k} \|\delta_{\bl}\|_{I_k}\\
  & \hspace{1cm}- \lag \delta_{\bp}, e_{\bu}(\bu,\bl) \delta_{\bu}+ 2(\bu_k- \mathbf f) \delta_{\bl} \rag+ 2 \chi_{A_k} \lag (\bu_k-\mathbf f)\delta_{\bp}, \delta_{\bl}\rag,
\end{align}
where we used the bound $\|\delta_{\bl}\|_{A_k} \leq \|g_4\|$ obtained from equation \eqref{redu_d}. Since $e_{\bl}(\bu,\bl) \delta_{\bl}= 2(\bu_k- \mathbf f) \delta_{\bl}$ we obtain from equation \eqref{redu_a} that
\begin{multline}
  \frac{\rho}{2} \|(\delta_{\bu}, \delta_{\bl})\|^2 \leq 2 \left( \|\bp_k\|_{A_k} \|g_4\|+ \|g_2\| \right) \| \delta_{\bu}\|  \\+ C \|g_3\| \|\delta_{\bl}\|+ \|g_1\| \|\delta_{\bp}\|+ 2 \|\bu_k- \mathbf f\| \|g_4\| \|\delta_{\bp}\|.
\end{multline}
From the uniform invertibility of $e_{\bu}(\bu,\bl)$ and equation \eqref{redu_b} we get that
\begin{equation}
  \|\delta_{\bp}\| \leq K \left( \|\bp_k\| \| \delta_{\bl}\|+ \| \delta_{\bu}\| + \|g_2\| + \|D_h \bu_k\| \|D_h \bp_k\| \|D_h \delta_{\bu}\| \right).
\end{equation}
Using Young's inequality for the term $\|g_2\| \|\delta_{\bu}\|$ we get that
\begin{equation}
\|g_2\| \|\delta_{\bu}\| \leq C\|g_2\|^2 + \frac{\rho}{16} \|(\delta_{\bu},\delta_{\bl})\|^2.
\end{equation}
A similar bound is obtained for the terms $\|g_4\| \|\delta_{\bu}\|$ and $\|g_3\| \|\delta_{\bl}\|$. For the term $\|\delta_{\bp}\| \|g_1\|$ we get that
\begin{equation}
\|\delta_{\bp}\| \|g_1\|  \leq K \|g_1\|^2 + \tilde K \|g_2\|^2 +\frac{\rho}{16} \|(\delta_{\bu},\delta_{\bl})\|^2.
\end{equation}
Altogether we obtain that
\begin{equation}
  \|(\delta_{\bu},\delta_{\bl})\|^2 \leq C \left(\|g_1\|^2 +\|g_2\|^2+\|g_3\|^2+\|g_4\|^2\right),
\end{equation}
which implies the result.
\end{proof}

\subsection{Globalization}
The semismooth Newton method (\ref{ssnewton}) typically exhibits a very small convergence neighbourhood for high values of $\gamma$. In order to globalize the semismooth Newton method, instead of using a line-search strategy, we consider a modified Jacobi matrix in each iteration. The main idea consists in reinforcing feasibility of the dual quantities (with suitable projections) in the building of the Jacobian and, in that manner, obtain a global convergent behaviour of the resulting algorithm.

To describe the modification, let us first introduce the following notation:
\begin{align*}
P_1(\mathbf u) &= \frac{2\gamma-1}{4\gamma} + \frac{\gamma|D_h {\mathbf u}|}{2} - \frac{\gamma}{2} t_1(\mathbf u)t_2(\mathbf u) + \frac{\gamma^3}{2} t_1^2(\mathbf u)t_2^2(\mathbf u),\\ P_2(\mathbf u) &= \frac{\gamma}{2} - \frac{\gamma^2}{2}\big[t_1(\mathbf u) + t_2(\mathbf u)\big] + \gamma^3\big[t_1(\mathbf u) + t_2(\mathbf u)\big]t_1(\mathbf u)  t_2(\mathbf u).
\end{align*}
The proposed building process is based on the properties of the stationary point we look for. Indeed, at the solution $\yy^*$, we know the following:
\begin{itemize}
\item On $\iA_\gamma$: $ {q} = h_\gamma(D_h {\mathbf u}^*)= \frac{D_h {\mathbf u}^*}{|D_h {\mathbf u}^*|}$. On the other hand, $h'_\gamma(D_h {\mathbf u})^*D_h {\mathbf z} = \frac{D_h {\mathbf z}}{|D_h {\mathbf u}|} - \frac{\lag D_h {\mathbf u}, D_h {\mathbf z}\rag}{|D_h {\mathbf u}|^2}\frac{D_h {\mathbf u}}{|D_h {\mathbf u}|}$. Since $\left|\frac{D_h {\mathbf u^*}}{|D_h {\mathbf u^*}|}\right| \leq 1$, by projecting onto the feasible set, we have an approximation of $h'_\gamma(D_h {\mathbf u})D_h$ on $\iA_\gamma$:
    $$(h'_\gamma(D_h {\mathbf u}))^\dag D_h := \frac{D_h}{|D_h {\mathbf u}|}- \frac{\lag D_h {\mathbf u}, D_h\rag}{|D_h {\mathbf u}|^2}\frac{ \mathbf{q}}{\max\{1,|  \mathbf{q}|\} }.$$
\item On $\iS_\gamma$: $ \mathbf{q} = h_\gamma(D_h {\mathbf u^*})= P_1(\mathbf u^*)\frac{D_h {\mathbf u^*}}{|D_h {\mathbf u^*}|}$, $1-\frac{1}{2\gamma}\leq P_1(\mathbf u)\leq 1$ and
    \begin{align*}h'_\gamma(D_h {\mathbf u})^*D_h {\mathbf z} = P_1(\mathbf u)\bigg(\frac{D_h {\mathbf z}}{|D_h {\mathbf u}|} - \frac{\lag D_h {\mathbf u}, D_h {\mathbf z}\rag}{|D_h {\mathbf u}|^3}D_h {\mathbf u} \bigg) +P_2(u)\frac{\lag D_h {\mathbf u}, D_h {\mathbf z}\rag}{|D_h {\mathbf u}|^2} D_h {\mathbf u}\\
    = \bigg(\frac{(D_h {\mathbf z} D \mathbf u^T)}{|D_h {\mathbf u}|^2} - \frac{\lag D_h {\mathbf u}, D_h {\mathbf z}\rag}{|D_h {\mathbf u}|^2} \bigg)P_1(\mathbf u)\frac{D_h {\mathbf u}}{|D_h {\mathbf u}|} +P_2(\mathbf u)\frac{\lag D_h {\mathbf u}, D_h {\mathbf z}\rag}{|D_h {\mathbf u}|^2} D_h {\mathbf u}.
    \end{align*}
    Hence, similar to the above consideration, we obtain:
    $$
    (h'_\gamma(D_h {\mathbf u}))^\dag D_h := \bigg\{\frac{(D_h {\mathbf z} D \mathbf u^T)}{|D_h {\mathbf u}|^2}
    +  \bigg[ \frac{P_2(\mathbf u)}{P_1(\mathbf u)}-\frac{1}{|D_h {\mathbf u}|} \bigg] \frac{\lag D_h {\mathbf u} , D_h\rag}{|D_h {\mathbf u}|}\bigg\}\frac{ \mathbf{q}}{\max\{1, | \mathbf{q}|\} }.
    $$
\end{itemize}
By replacing  $(h'_\gamma(D_h {\mathbf u}))$ by $(h'_\gamma(D_h {\mathbf u}))^\dag$, we get a modified generalized derivative of $F_h$:
\begin{multline}\label{s3.7_j_mod}
\iG^\dag_{F_h}(\yy)(\delta_{\mathbf u}, \delta_{\mathbf q}, \delta_{\mathbf p}, \delta_{\mathbf z}, \delta_{\boldsymbol{\lambda}})^T=\\
\left(\begin{array}{c}
(2 {\boldsymbol{\lambda}} \II -\mu \Delta_h)\delta_{\mathbf u}  -\Div_h \delta_{\mathbf q}   + 2(\mathbf u -\mathbf f) \delta_{\boldsymbol{\lambda}}\\
(h'_\gamma(D_h {\mathbf u}))^\dag\delta_{\mathbf u} -\delta_{\mathbf q} \\
2\delta_{\mathbf u}  + (2 {\boldsymbol{\lambda}} \II -\mu \Delta_h)\delta_{\mathbf p}   -\Div_h\delta_{\mathbf z}    + 2 p \delta_{\boldsymbol{\lambda}} \\
\big(h''_\gamma(D_h {\mathbf u})^*[D_h {\mathbf p}, D_h \delta_{\mathbf u}] + (h'_\gamma(D_h {\mathbf u}))^\dag \delta_{\mathbf p} -\delta_{\mathbf z} \\
p \delta_{\mathbf u} +  (\mathbf  {u} -  \mathbf {f}) \delta_{\mathbf p}  + \beta(\II - \Delta_h) \delta_{\boldsymbol{\lambda}} - G_m\big((\mathbf u -\mathbf f) p- \beta \Delta_h  \boldsymbol{\lambda} \big)\big(p \delta_{\mathbf u} +  (\mathbf{u} - \mathbf{f}) \delta_{\mathbf p}  - \beta\Delta_h \delta_{\boldsymbol{\lambda}}\big)
\end{array} \right)
\end{multline}
and the corresponding modified iteration for solving of $F_h(\yy)=0$ with $F_h$ in (\ref{s3.1_strong_form_h}):
\begin{equation}\label{s3.6_Newton_step_disc}
\iG^\dag_{F_h}({\yy}_k)\big({\yy}_{k+1} - {\yy}_k\big)= - F_h({\yy}_k).
\end{equation}

\section{Computational experiments}
\label{sec:Computational experiments}

All schemes developed previously were implemented in MATLAB and run in a HP Blade multiprocessor system. The overall used algorithm is given through the following steps:
\begin{algorithm}[Domain Decomposition-Semismooth Newton Algorithm]\label{alg_3_1}~\\
\begin{itemize}
\item[0.] Initialize $\yy_0=(\mathbf u_0, \mathbf q_0, \mathbf p_0, \mathbf z_0, \boldsymbol{\lambda}_0)^T$, choose the number of subdomains $M$, the number of intersecting pixels $L$ and set $k=0$.
\item[1.] In each subdomain $j \in \{1, \dots, M\}$, solve iteratively $(\ref{s3.6_Newton_step_disc})$:
    $$\iG^\dag_{F_h}(\yy_k^j) \delta_{\yy}^j = - F_h(\yy_k^j),$$
    until $\| \delta_{\yy}^j \| \leq tol$, and update $\yy_{k+1}^j = \yy_k^j + \delta_{\yy}^j$.
\item[2.] Merge the subdomain solutions $\yy_{k+1}^j$ into one solution $\yy_{k+1}$ on the whole image domain.
\item[3.] Stop if the domain-decomposition stopping criteria is satisfied. Otherwise, update $k \leftarrow k+1$ and go to 1.
\end{itemize}
\end{algorithm}
Since the computations in each subdomain are independent from each other, these may run simultaneously in parallel processors. We implemented a standard for-loop for iteration $k$ of the domain decomposition method and, within each $k$, a parallel MATLAB parfor-loop for computing the solution on each subdomain.

For the numerical experimentation we introduce some notation and several quantities of interest, which are described next:\\
{\small
\begin{tabular}{l l}
$L$ & Number of overlapping pixels between 2 neighboring subdomains\\
 $M_{NonDDC}$ & Semismooth Newton method on the whole domain $\Omega$\\
 $M_{orgDDC}$ & Original Schwarz-Semismooth Newton method\\
 $M_{optDDC}$ & Optimized Schwarz-Semismooth Newton method\\
 $er_\lambda$ & $\|\boldsymbol{\lambda}_{DD}-\boldsymbol{\lambda}\|$, where $\boldsymbol{\lambda}_{DD}$ is obtained by $M_{orgDDC}$ or $M_{optDDC}$, and $\boldsymbol{\lambda}$ by $M_{NonDDC}$\\
 $er_u$ & $\|\mathbf u_{DD}-\mathbf u\|$, where $\mathbf u_{DD}$ is obtained by $M_{orgDDC}$ or $M_{optDDC}$, and $\mathbf u$ by $M_{NonDDC}$.\\
 $k_{\max}$ & Maximum number of subdomain SSN-iterations in all DD iterations\\
 SSNR & $\sum_i\|F^i_h(\yy_{k_{\max}})\|$ on $\Omega_i\subset \Omega$\\
$T_p$ & Performing time (in seconds).
\end{tabular}
}

We also use the structural similarity measure (SSIM) (see \cite{WangBokvik}) to compare the obtained images with the original one.

\subsection{Uniform Gaussian noise}
\noindent In this first experiment, we consider the denoising problem with brain scan images. The first set consists of images of $256\times 256$ pixels and Gaussian noise with zero mean and variance $\sigma=0.0075$. The original and noisy images are shown in Figure \ref{fig_1st_exper}. The domain decomposition-semismooth Newton algorithms run with the parameter values $\gamma = 50$, $\mu = 10^{-13}$, $\beta = 10^{-9}$ and $h=0.01$. The results are shown in Figure \ref{fig_1st_results}. From the surface representation of $\lambda$, we can observe that $\lambda$ is continuous and its shape is related to the one of the original image. In particular, the regularization is stronger in homogeneous regions in the image, and weaker where the image intensity undergoes variations on a smaller scale.
\begin{figure}[h] \centering
  \begin{tabular}{cc}
    \includegraphics[width=0.24\textwidth]{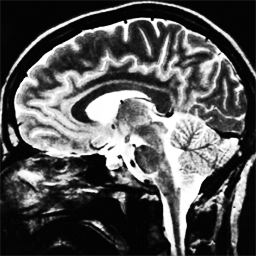} &
    \includegraphics[width=0.24\textwidth]{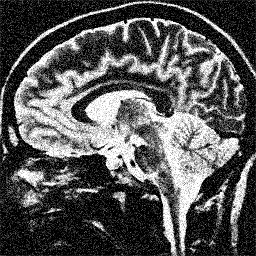}
  \end{tabular}
  \caption{\small The first experiment: Original (left) and noisy (right) images.}
  \label{fig_1st_exper}
\end{figure}
\begin{figure} \centering
  \begin{tabular}{ccc}
    \includegraphics[width=0.235\textwidth]{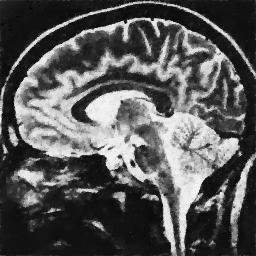} &
    \includegraphics[width=0.46\textwidth]{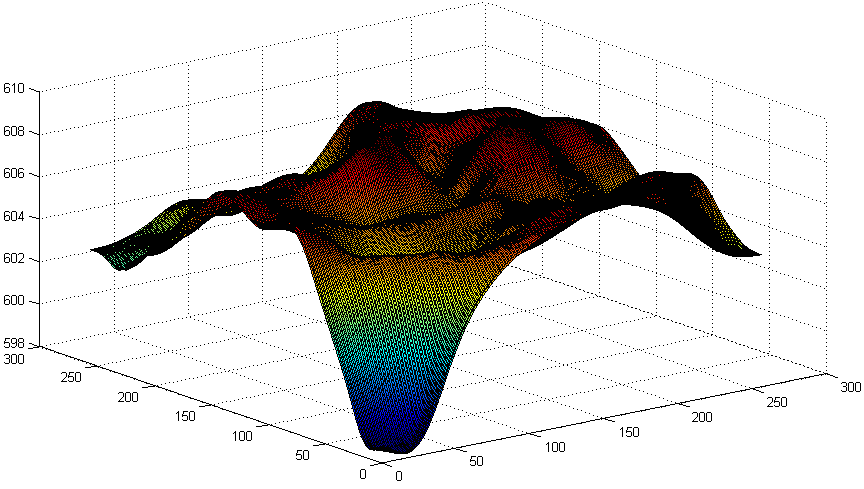} &
    \includegraphics[width=0.235\textwidth]{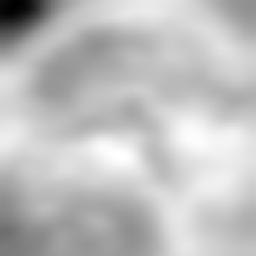}\\
  \includegraphics[width=0.235\textwidth]{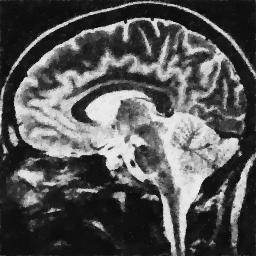}&
    \includegraphics[width=0.46\textwidth]{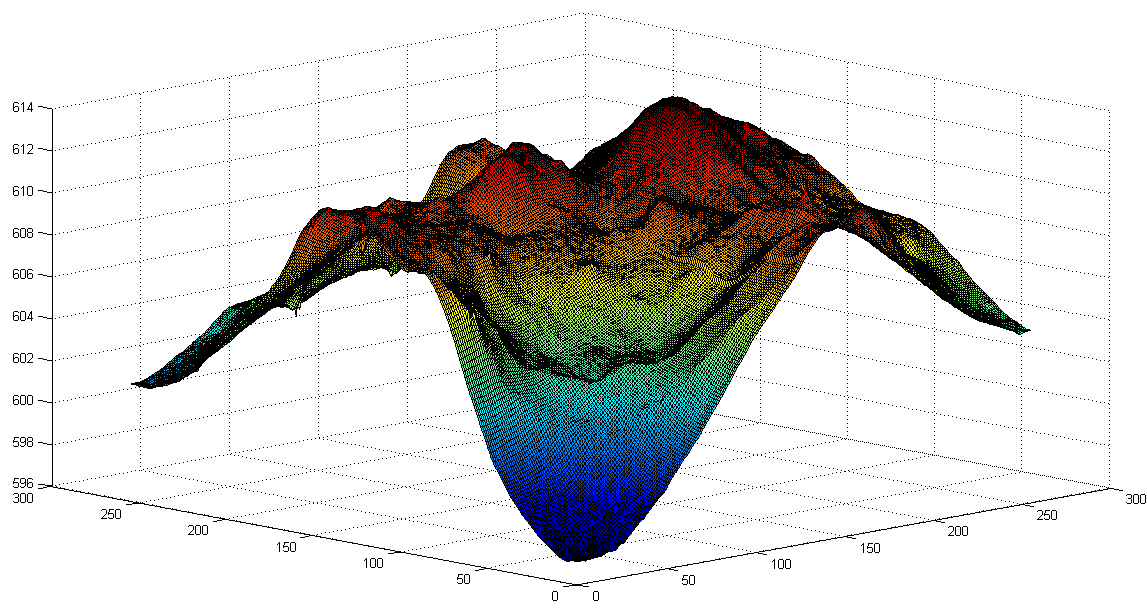}&
    \includegraphics[width=0.235\textwidth]{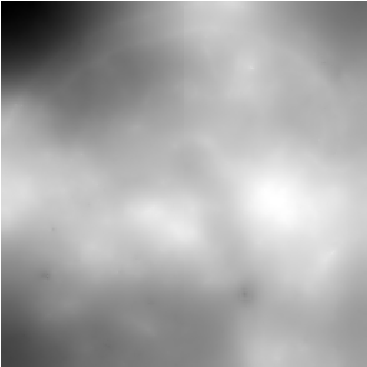}\\
    \includegraphics[width=0.235\textwidth]{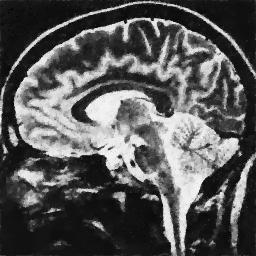}&
      \includegraphics[width=0.46\textwidth]{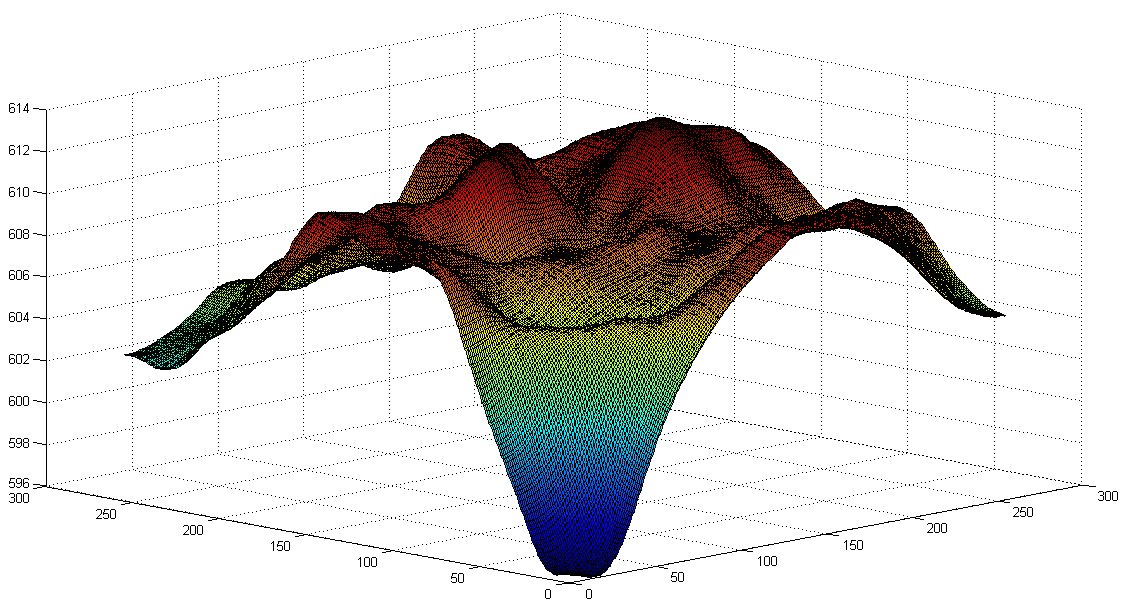}&
    \includegraphics[width=0.235\textwidth]{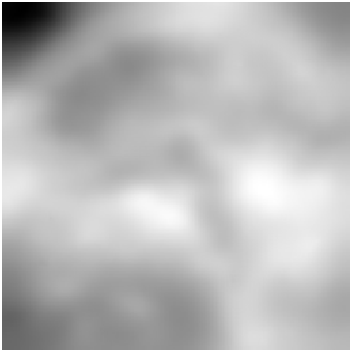}
  \end{tabular}
  \caption{\small Using the training set in Figure \ref{fig_1st_exper} the optimally denoised images are shown (left), surface plots of $\lambda$ (center) and images of $\lambda$ (right). The first row corresponds to the result achieved without domain decomposition $M_{NonDDC}$; the second and third row correspond to the results using domain decomposition (2 iterations) without ($M_{orgDDC}$) and with ($M_{optDDC}$) optimized transmission condition, respectively. Here we used 2 subdomains with an overlap of $L=40$ pixels.}
  \label{fig_1st_results}
\end{figure}

In Table \ref{t3.1_1st_exp} the performance of the different methods is compared. For all of them, only the first 2 domain decomposition iterations were considered. The total number of SSN iterations differ at most by one. The impact of the domain decomposition method becomes clear when comparing the computing times of the methods, corresponding to one, two and four subdomains. The computing time is significantly reduced. 
The effect of the optimized transmission conditions can be realized when comparing the gap between subdomains, which is much lower in the case of  optimized transmission conditions ($M_{optDDC}$) than in the standard Schwarz method ($M_{orgDDC}$).
\begin{table}[h]
{\small
\begin{tabular}{l |  c|c| c|c| c|c |c| c| c|}
\hline
\multirow{2}{*}{Method} & \multirow{2}{*}{$k_{\max}\quad$}  & \multicolumn{4}{c|}{$L=20$}& \multicolumn{4}{c|}{$L=40$}\\
\cline{3-10}
 & &(1)&(2)& (3)& (4) &(1)&(2)& (3) & (4)\\
\hline
 $M_{NonDDC}$ & $10$  & \multicolumn{8}{c|}{$SSIM=0.894\quad T_p= 83.71$} \\
\hline
\multirow{2}{*}{$M_{orgDDC}$} &(a) $11$ &0.851 & 5.3 &$2.71$&$28.11$  &0.861 & 3.1  & $1.76$&$38.01$  \\
\cline{2-10}
&(b) $11$ &0.853 & 5.9 & $3.60$&$10.09$ & 0.858& 3.7 & $2.05$ & $19.99$\\
\hline
\multirow{2}{*}{$M_{optDDC}$} &(a) $11$ &0.869 & 3.2 & $0.99$&$29.85$ &0.881 & 1.9 &$1.01$ & $39.92$\\
\cline{2-10}&(b) $10$ & 0.865 & 3.6 & $1.22$&$11.03$ &0.877  &  2.3 &$1.09$ & $23.81$\\
\hline
\end{tabular} }
\caption{\small Numerical results for the first experiment after one domain decomposition iterations.
 Rows (a): 2 subdomains; (b): 4 subdomains. Columns (1): $SSIM$; (2): $er_{u}$ ($\times 10^{-3}$); (3): $er_{\lambda}$; (4): $T_p$.} \label{t3.1_1st_exp}
\end{table}

\subsection{Non-uniform Gaussian noise}
For this experiment we consider input images of size $512\times512$, with a Gaussian noise of $\sigma=0.014$ on the whole domain and an additional Gaussian noise component of $\sigma=0.016$ on some areas which are marked in red (see Figure \ref{fig_4th_input}). The~parameter~values used are $\mu = 0$, $\beta=10^{-10}$, $\gamma=100$ and $h = 0.002$. The shape of $\lambda$ is shown in Figure \ref{fig_4th_rs_whole}.
\begin{figure}[h] \centering
  \begin{tabular}{cc}
    \includegraphics[width=0.25\textwidth]{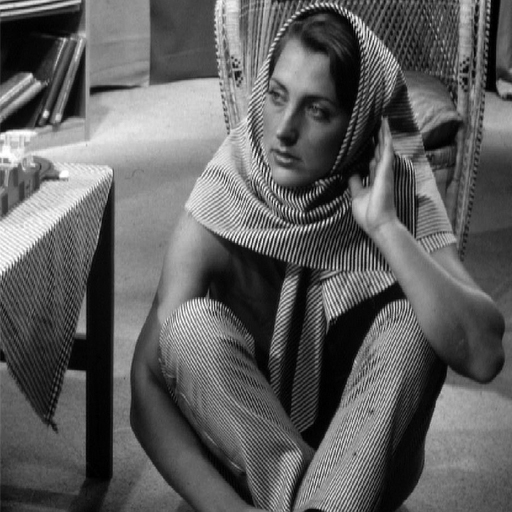} &
    \includegraphics[width=0.25\textwidth]{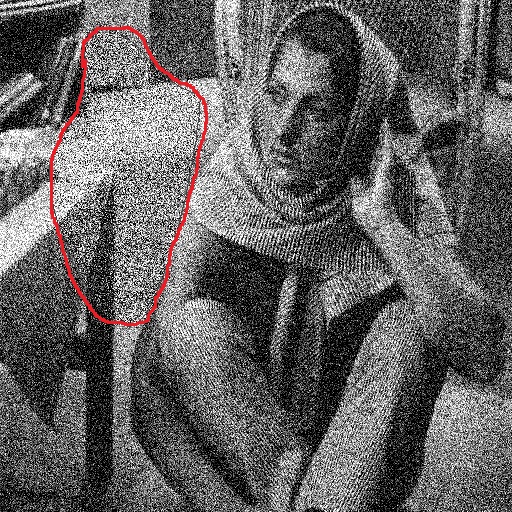}
  \end{tabular}
  \caption{\small The input images for the non-uniform noise experiment: original (left) and noisy (right) images.}
  \label{fig_4th_input}
\end{figure}\enlargethispage*{15pt}

\begin{figure}[h] \centering
  \begin{tabular}{cc}
    \includegraphics[width=0.25\textwidth]{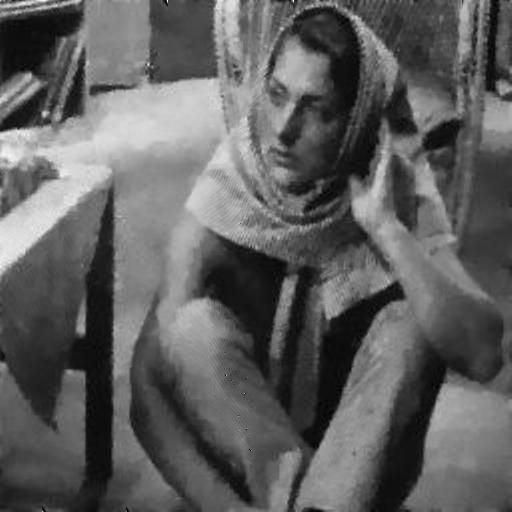} &
    \includegraphics[width=0.25\textwidth]{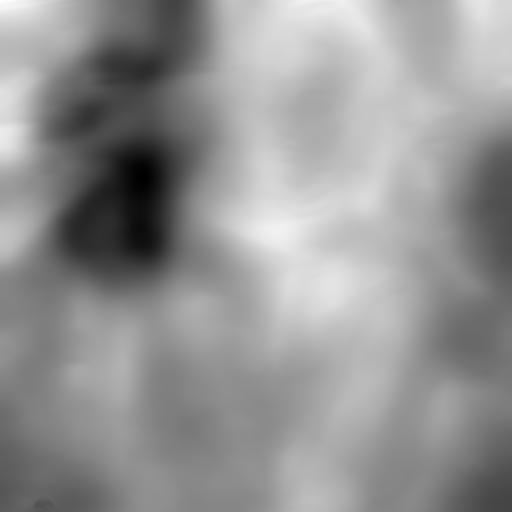}\\
  \end{tabular}
  \caption{\small Denoised image (left) and image of $\lambda$ (right).}
  \label{fig_4th_rs_whole}
\end{figure}

The semismooth Newton method, on the whole domain, takes $k_{\max}=14$ iterations and $T_p = 1398.1(s)$ to converge. The denoised image has an $SSIM = 0.791$. Meanwhile, one iteration of $M_{orgDDC}$ with $L = 30$ takes $k_{\max}=15$ iterations and $T_p = 411.7(s)$ to converge, and yields $SSIM = 0.769$. The error with respect to $\lambda$ is given by $er_{\lambda} = 0.97$.
\begin{figure}[!h] \centering
  \begin{tabular}{cc}
    \includegraphics[width=0.25\textwidth]{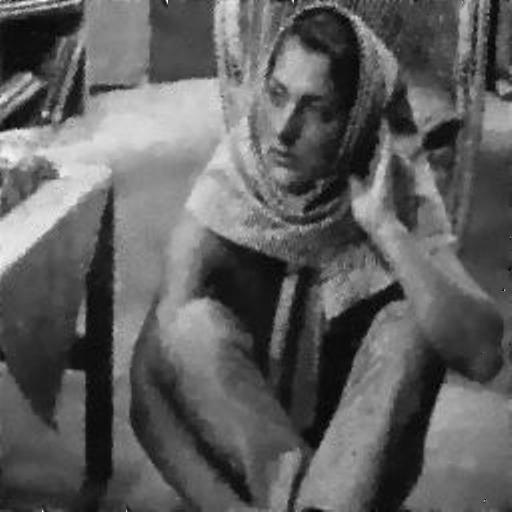} &
    \includegraphics[width=0.25\textwidth]{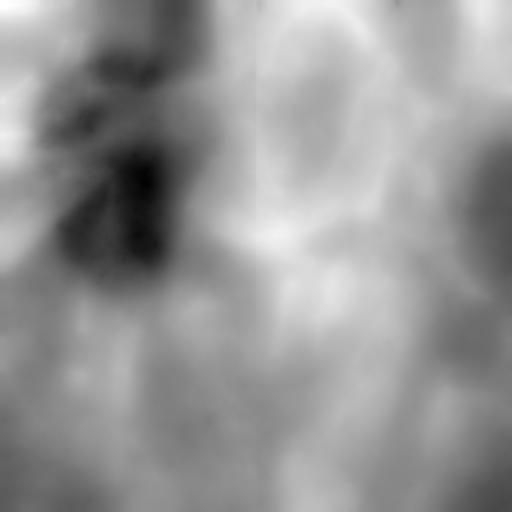}
  \end{tabular}
  \caption{\small $M_{orgDDC}$ with $L = 30$: Denoised image (left) and $\lambda$ (right).}
  \label{fig_4th_rs_org}
\end{figure}
With the same value $L = 30$, the $M_{optDDC}$ stops after $k_{\max}=15$ and $T_p = 433.9(s)$. The similarity measure is $SSIM = 0.785$ and the error with respect to $\lambda$ is given by $er_{\lambda} = 0.51$. The corresponding images for all three methods are given in Figures \ref{fig_4th_rs_whole}, \ref{fig_4th_rs_org} and \ref{fig_4th_rs_opt}, respectively.

\begin{figure}[h] \centering
  \begin{tabular}{cc}
    \includegraphics[width=0.25\textwidth]{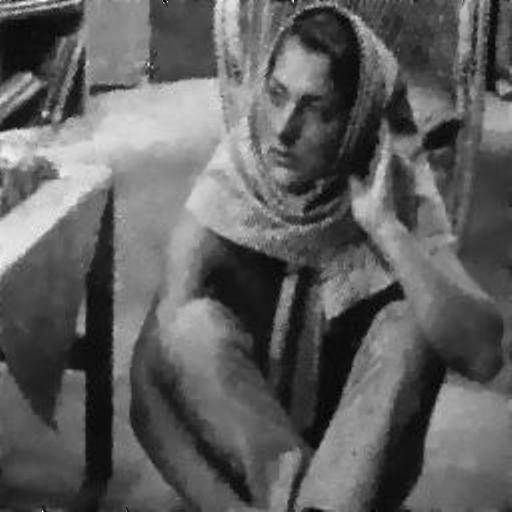} &
    \includegraphics[width=0.25\textwidth]{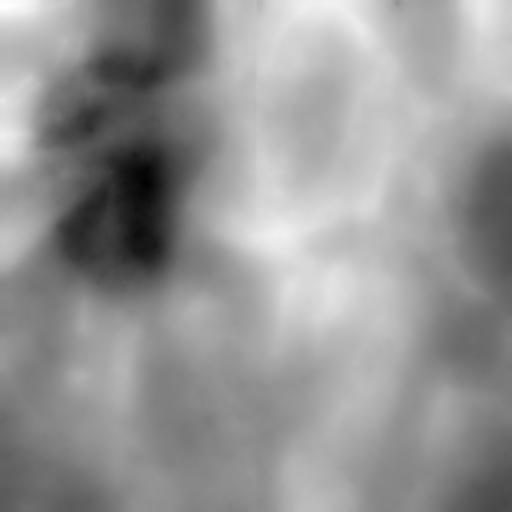}
  \end{tabular}
  \caption{\small $M_{optDDC}$ with $L = 30$: Denoised image (left) and $\lambda$ (right).}
  \label{fig_4th_rs_opt}
\end{figure}

From Figures \ref{fig_4th_rs_whole}, \ref{fig_4th_rs_org} and \ref{fig_4th_rs_opt} we can observe that the areas with higher noise level result in smaller pointwise values of $\lambda$. Moreover, from the tabulated results, one can realize that, in order to get good results for $M_{orgDCC}$, a sufficiently large value of $L$ is required. This has of course an increasing effect in the total computing time.

\subsection{Large training set}
As can be seen in the experiments with one training image, the spatially adapted $\lambda$ does not only capture inhomogeneities in the noise, but also adapts to the scale of structures in the underlying image. Learning one fixed parameter, therefore, for more than one image seems counterintuitive since these local adaptions will change in each image. In the following experiment we argue, however, that if the training set features images with sufficiently similar content as well as with similar and heterogenous noise properties, as might be the case for MRI scans of brains, then the learned, spatially-adapted $\lambda$ still outperforms a learned $\lambda$ that is constant. To verify this, we compute the optimal functional parameter $\lambda$ from a training set of 10 pairs $(u_j^\dag, f_j)$, $j=1, \dots, 10$. The images (of size $256\times 256$) were taken from the OASIS online database. A Gaussian  noise with $\sigma=0.025$ was distributed on the images, and in the areas marked by red, additional noise with $\sigma=0.1$ was imposed (to all noisy images at the same location).

The parameter values used for this experiment were $\gamma = 50$, $\mu = 10^{-15}$, $\beta = 10^{-12}$ and $h=1/256$. We utilized the optimized Schwarz method $M_{optDDC}$, with overlapping size $L=5$, and stop after two iterates. A total amount of 24 subdomains were considered and the computations were carried out in parallel. The semismooth Newton method, within each step of $M_{optDDC}$, stops whenever $err < 0.01$. The results are shown in Figure \ref{fig_training_test}.
\begin{figure}[h!] \centering
  \begin{tabular}{lccccc}
    a)&\includegraphics[width=0.15\textwidth]{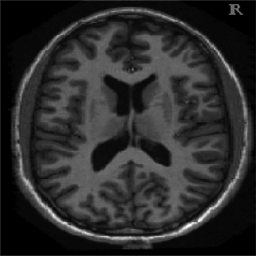} & \includegraphics[width=0.15\textwidth]{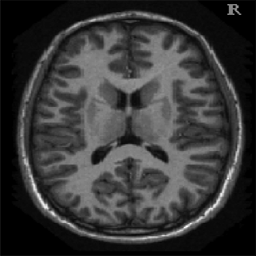} & \includegraphics[width=0.15\textwidth]{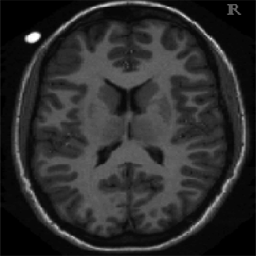} &
    \includegraphics[width=0.15\textwidth]{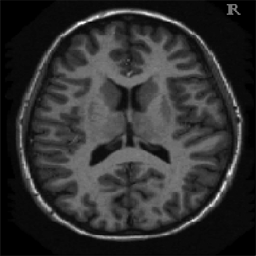} &
    \includegraphics[width=0.15\textwidth]{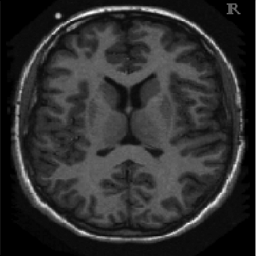} \\
      b)&\includegraphics[width=0.15\textwidth]{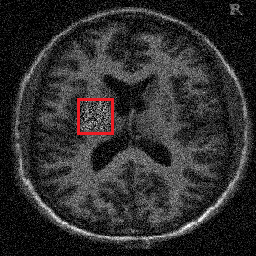}& \includegraphics[width=0.15\textwidth]{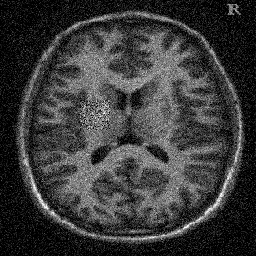}& \includegraphics[width=0.15\textwidth]{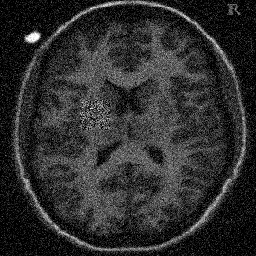}& \includegraphics[width=0.15\textwidth]{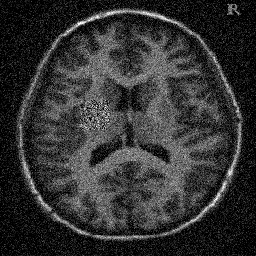}& \includegraphics[width=0.15\textwidth]{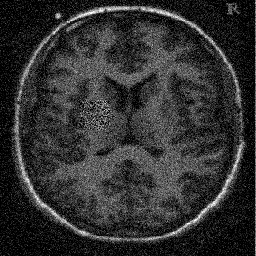}\\
    c)&\includegraphics[width=0.15\textwidth]{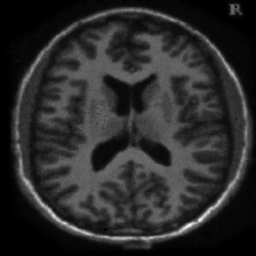}    &
    \includegraphics[width=0.15\textwidth]{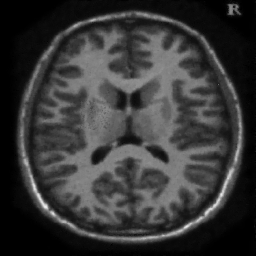}   &
    \includegraphics[width=0.15\textwidth]{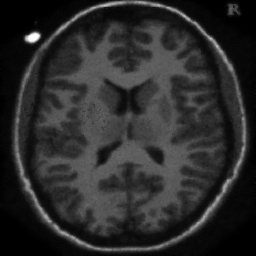}   &
    \includegraphics[width=0.15\textwidth]{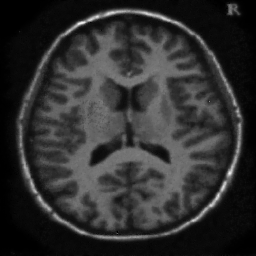}    &
    \includegraphics[width=0.15\textwidth]{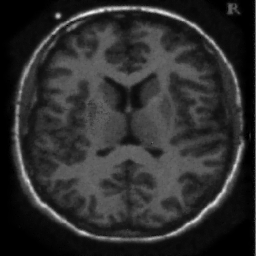}    \\
  \end{tabular}
  \caption{\small Results of learning the spatial parameter $\lambda$ for a training set $(u_k^\dag, f_k)$: (a) Original images, (b) Noisy images, (c) Denoised images with $M_{optDDC}$ (24 subdomains).}
  \label{fig_training_test}
\end{figure}

\begin{figure}[h!] \centering
  \begin{tabular}{cc}
\includegraphics[width=0.7\textwidth]{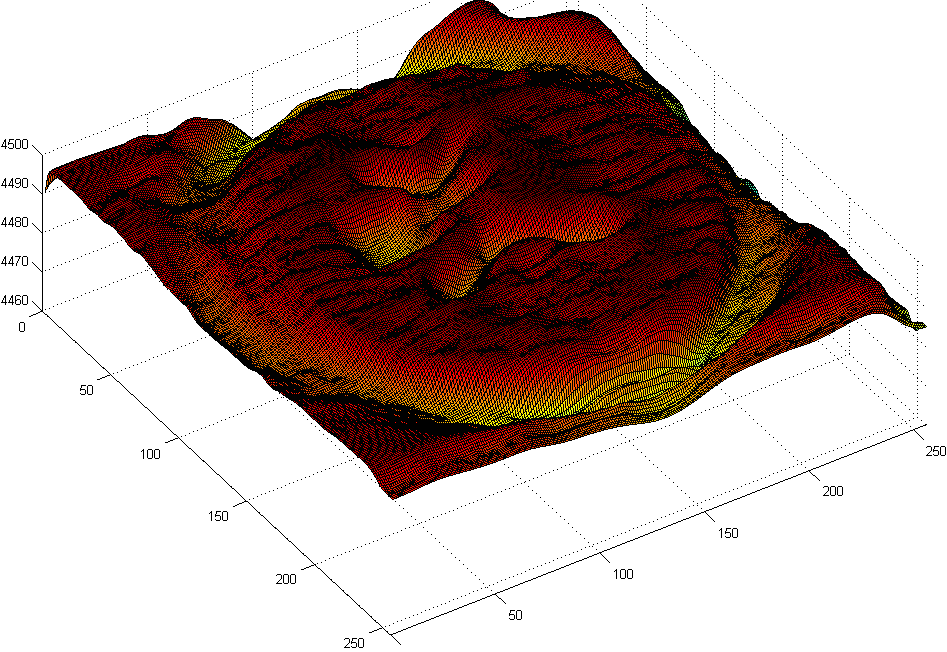}
  \end{tabular}
  \caption{\small Optimal parameter $\lambda$ for the experiment in Figure \ref{fig_training_test} after $2$ Schwarz iterations.}
\end{figure}

The performance of the overall algorithm for the cases of 4 and 24 subdomains is registered in Table \ref{tab_training_test}. It becomes clear from the data, that there is a significant decrease in the total computing time, when an increasing number of subdomains is considered. This, on the other hand, does not significantly affect the quality of the obtained image, measured by SSIM. We denote $AVG_{Gap_{\lambda}} := \frac{1}{10}\sum\limits_{i=j}^{10}\|\lambda_j^m - \lambda_j^n\|\big|_{\Omega_m \cap \Omega_n}$, $j=1, \dots, 10$, $\lambda_j^l = \lambda_j\big|_{\Omega_l}$ and $\Omega_m, \Omega_n$ are subdomains.
\begin{table}[H]
{\small
\begin{tabular}{|c | c| c| c| c | c | c |}
\hline
{$\# \Omega_i$} & {$k_{\max}\quad$}  & $T_p$ & $SSIM_{\min}$ &  ${SSIM}_{\max}$& ${SSIM}_{avg}$ & $AVG_{Gap_{\lambda}}$\\
\hline
4 &$17$ &$2098.42$ & $0.826$  & $0.878$& $0.856$& $3.072$\\
\hline
24 & $14$ &$179.01$ & $0.821$ & $0.883$&$0.863$& $2.785$\\
\hline
\end{tabular} }
\caption{\small Numerical results for $M_{optDDC}$. $SSIM_{\min}$, ${SSIM}_{\max}$, ${SSIM}_{avg}$: min, max and average SSIM of the optimal subdomain images with respect to $u_j^\dag$, $j=1\ldots 10$.}\label{tab_training_test}
\end{table}

\subsection{Performance compared to other spatially-dependent approaches}
In the last experiment we compare the results of our optimal learning approach with the ones obtained with the spatially adapted total variation method (SA-TV) proposed in \cite{dong2011automated}. For the comparison, we apply the optimal spatially-dependent parameter computed in the previous experiment (see Figure 4.8) to a different brain scan, not included in the training set.


The chosen parameters for SA-TV are ${\mu} = 1e-6$, ${\beta} = 10^{-3}$, $\lambda_0=2,5$, $w=11$ and $z=2$. We use the stopping rule as in \cite{dong2011automated}, i.e., $\|u_k - f\|\leq \sigma$. We should remark that the obtained results are very sensitive with respect to the choice of the algorithmic parameters. A lot of trial and error has to be carried out to get proper parameters. This time-consuming preprocessing step should also been taken into account when judging the overall SA-TV performance.
\begin{figure}[h!] \centering
  \begin{tabular}{cccc}
    \includegraphics[width=0.25\textwidth]{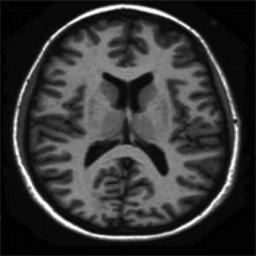} &
    \includegraphics[width=0.25\textwidth]{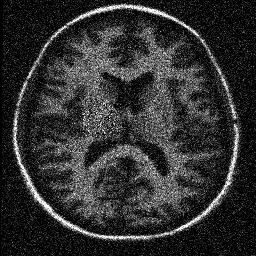}\\
    (a) & (b)\\
    \includegraphics[width=0.25\textwidth]{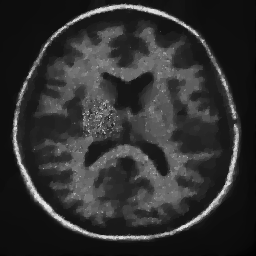}&
    \includegraphics[width=0.25\textwidth]{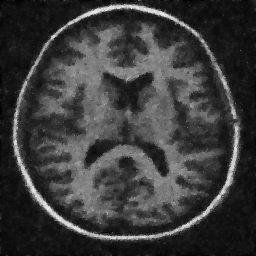}
    \\
    (c)&(d)
  \end{tabular}
  \caption{\small Comparison of SA-TV and the optimal learning approach: (a) Original image, (b) Noisy image, (c) Denoised image with SA-TV, (d) Denoised image with learned $\lambda$}
  \label{fig_last_exper_1}
\end{figure}

We compare our optimal learning method with SA-TV by means of two well-known quality measures: the peak signal-to-noise ratio (PSNR) and the structural similarity measure (SSIM). The results of the two approaches are reported in Table \ref{tab_last_expr}, where it can be observed that our approach outperforms SA-TV for the tested image, with respect to both quality measures.


\begin{table}[h!]
\begin{tabular}{|c| r| c |}
\hline
Method  &  PSNR & SSIM\\
\hline
SA-TV  & 25.31 & 0.799\\
\hline
Learning  & 27.51 &0.822\\
\hline
\end{tabular}
\caption{\small Comparison of our optimal learning approach and SA-TV for the brain scan image with non-uniform gaussian noise} \label{tab_last_expr}
\end{table}

\section{Appendix}
\label{sec:Appendix}

\begin{proof}[Proof of Lemma $\ref{characters_h_gamma}$]
For $z, \xi, \tau \in \RR^2$, by setting $t_1^z=\frac{\gamma}{2}\big(\gamma |z| - 1 + \frac{1}{2\gamma}\big)$, $t_2^z= \gamma |z| -1 - \frac{3}{2\gamma}$,
$$\chi_{\iA_z}=\begin{cases}1 \quad \mbox{if}\quad \gamma|z| \geq b \\ 0 \quad \mbox{otherwise}\end{cases};\chi_{\iS_z}=\begin{cases}1 \quad \mbox{if}\quad a\leq \gamma|z| < b \\ 0 \quad \mbox{otherwise}\end{cases}\mbox{and}\quad \chi_{\iI_z}=\begin{cases}1 \quad \mbox{if}\quad  \gamma|z| < a \\ 0 \quad \mbox{otherwise}\end{cases}$$
\begin{align*}
\mbox{we get}\qquad h'_\gamma(z)[\xi] &=
\chi_{\iA_z}\bigg[\dfrac{\xi}{|z|} - \dfrac{\lag z, \xi\rag}{|z|^3}z\bigg] + \chi_{\iS_z}\bigg\{
\frac{\gamma}{2} \xi + \gamma^2 \big(\gamma|z| -1\big)\big( 2\gamma^2 t_1^zt_2^z -1\big) \frac{\lag z, \xi\rag}{|z|^2} z \qquad\\
&+ \bigg[\frac{2\gamma-1}{4\gamma} - \frac{\gamma t_1^zt_2^z}{2} +\frac{\gamma^3(t_1^zt_2^z)^2}{2}\bigg]\bigg( \frac{\xi}{|z|} - \frac{\lag z, \xi\rag}{|z|^3} z\bigg)\bigg\}+ \chi_{\iI_z}\big( \gamma \xi \big).
\end{align*}
Moreover, by setting $\phi(z, \xi) =-\bigg\{\frac{(\xi z^T)}{|z|^3} + \frac{(z \xi^T)}{|z|^3} - 3\frac{\lag z, \xi\rag (z z^T)}{|z|^5}+ \frac{\lag z, \xi\rag}{|z|^3}\bigg\} $, we get
\begin{align*}h''_\gamma(z)[\xi,\tau]  =
\chi_{\iA_z}\phi(z, \xi) \tau + \chi_{\iS_z}&\bigg\{
\phi(z, \xi) \tau \bigg[\frac{\gamma}{2}t_1^zt_2^z\bigg( 4\gamma^3 |z| \big(\gamma|z| - 1\big)- \gamma^2t_1^zt_2^z + 1 \bigg)  \\
- &\bigg(\gamma^3 |z|^2 - \gamma^2|z| +\frac{1}{2} - \frac{1}{4\gamma}\bigg)\bigg] +6\gamma^5  t_1^zt_2^z\dfrac{\lag z, \xi\rag (z z^T)}{|z|^3}\tau \bigg\}.
\end{align*}
$a$) We first consider the case $z, \hat{z}, \xi, \tau \in \RR^2$. Indeed,\\
{($a1$)} If $|z|<\frac{a}{\gamma}$ and $|\hat{z}| < \frac{a}{\gamma}$, we have $|R(z, \hat{z}, \xi, \tau)|:=|h''_\gamma(z)[\xi][\tau] - h''_\gamma(\hat{z})[\xi][\tau] |=0$.\\
{($a2$)} If $|z|>\frac{b}{\gamma}$ and $|\hat{z}| < \frac{a}{\gamma}$, by a straight computation, we find $|z - \hat{z}| \geq \big||z| - |\hat{z}|\big|\geq \frac{1}{2\gamma^2}$ 
and $\big|R(z, \hat{z}, \xi, \tau)\big| = \big|\phi(z, \xi)\tau\big|\leq  \frac{24\gamma^4}{(2\gamma+1)^2}|\xi||\tau|$. This yields (\ref{estima_dd_h}).\\
{($a3$)} If $|z|, |\hat{z}| >\frac{b}{\gamma}$, we have $\frac{1}{|\hat{z}|^3}, \frac{1}{ |z|^3} \leq \big(\frac{1}{\gamma} + \frac{1}{2\gamma^2}\big)^{-3}$ and 
\begin{align*}R(z, \hat{z}, \xi, \tau) =
\bigg\{3\bigg[\frac{\lag z, \xi\rag\big(z z^T\big)}{|z|^5}&- \frac{\lag \hat{z}, \xi\rag\big(\hat{z} \hat{z}^T\big)}{|\hat{z}|^5}\bigg]
- \bigg[\frac{\big(\xi z^T\big)}{|z|^3} - \frac{\big(\xi \hat{z}^T\big)}{|\hat{z}|^3} \bigg]- \bigg[\frac{\big(z \xi^T\big)}{|z|^3} - \frac{\big(\hat{z}\xi^T\big)}{|\hat{z}|^3}\bigg]\\
&- \bigg[\frac{\lag z, \xi\rag}{|z|^3} - \frac{\lag \hat{z}, \xi\rag}{|\hat{z}|^3} \bigg]\bigg\}\tau
=:(3S_0  - S_1  -  S_2 -  S_3 ) \tau.
\end{align*}
One gets $
|S_1| = \big|\frac{\lag z, \xi\rag}{|z|^3} - \frac{\lag \hat{z}, \xi\rag}{|\hat{z}|^3}\big|
\leq \big[\frac{1}{|\hat{z}|^3}+\frac{1}{ |z|^3}\big] \big|\lag z- \hat{z}, \xi\rag \big| + \frac{\big||z|^3\lag z, \xi\rag - |\hat{z}|^3 \lag \hat{z}, \xi\rag\big|}{|z|^3|\hat{z}|^3}
$. We find for the first term $\big[\frac{1}{|\hat{z}|^3}+\frac{1}{ |z|^3}\big] \big|\lag z-\hat{z}, \xi\rag \big| \leq  \frac{16\gamma^6}{(2\gamma+1)^3}  | z-\hat{z}|| \xi|$ and for the second
\begin{align*}
\frac{\big||z|^3\lag z, \xi\rag - |\hat{z}|^3 \lag \hat{z}, \xi\rag\big|}{|z|^3|\hat{z}|^3}
& \leq \frac{|\xi|}{|z|^3|\hat{z}|^3} \big| |z|^3 z - |\hat{z}|^3  \hat{z}\big|\\
& =\frac{|\xi|}{|z|^3|\hat{z}|^3} \big||\hat{z}|^3 (z - \hat{z}) + z\big[|z|^3 -|\hat{z}|^3 \big]\big|\\
& \leq |\xi|.|z - \hat{z}|\big[\frac{1}{|z|^3} + \frac{1}{|\hat{z}|^3} + \frac{1}{|z|.|\hat{z}|^2} + \frac{1}{|z|^2|\hat{z}|}\big]\leq\frac{32\gamma^6}{(2\gamma+1)^3}  | z - \hat{z}|| \xi|.
\end{align*}
Hence, $| S_1 \tau| \leq \frac{48\gamma^6}{(2\gamma+1)^3}  | z - \hat{z}|| \xi||\tau|$.\\
We also have
$|S_2 \tau |
 = \frac{\big|(|\hat{z}|^3z - |z|^3\hat{z}) \lag \xi, \tau\rag\big|}{|z|^3|\hat{z}|^3}\leq \big[\frac{1}{|\hat{z}|^3} +  \frac{1}{|z|^3}\big]\big|z - \hat{z}\big||\xi|| \tau| + \frac{\big||z|^3 z- |\hat{z}|^3\hat{z} \big|}{|z|^3|\hat{z}|^3}|\xi|| \tau|$
and
$|S_3 \tau |  = \big|\frac{\xi\lag z, \tau \rag}{|z|^3} - \frac{\xi\lag \hat{z}, \tau\rag}{|\hat{z}|^3} \big|=\big| \xi \big\lag \frac{|\hat{z}|^3z - |z|^3\hat{z}}{|z|^3|\hat{z}|^3}, \tau  \big\rag \big|\leq |\xi| \big| \frac{|\hat{z}|^3z - |z|^3\hat{z}}{|z|^3|\hat{z}|^3}\big| |\tau|$.\\
Similarly, we have $| S_2 \tau|\leq \frac{48\gamma^6}{(2\gamma+1)^3}  | z - \hat{z}|| \xi||\tau|, |S_3 \tau|\leq \frac{32\gamma^6}{(2\gamma+1)^3}  | z - \hat{z}|| \xi||\tau| $.\\
We get $|S_0\tau| \leq |\xi| |\tau| \big[\big|\frac{|\hat{z}|^3 z -  |z|^3 \hat{z}}{|z|^3|\hat{z}|^3}\big| +  \big|\frac{(z_1z_2)z}{|z|^5} - \frac{(\hat{z}_1\hat{z}_2)\hat{z}}{|\hat{z}|^5}\big|\big]$, where $z=(z_1,z_2), \hat{z}=(\hat{z}_1,\hat{z}_2)$.
Similar to $S_3\tau$, we have $\big|\frac{|\hat{z}|^3z-  |z|^3 \hat{z}}{|z|^3|\hat{z}|^3}\big| \leq \frac{32\gamma^6}{(2\gamma+1)^3}  | z - \hat{z}|$. By setting $\bar{z} = (\bar{z}_1,\bar{z}_2)=\frac{z}{|z|}$ and  $\mathbf{z} =(\mathbf{z}_1,\mathbf{z}_2)= \frac{\hat{z}}{|\hat{z}|}$ one gets $\big|\frac{(z_1z_2)z}{|z|^5} - \frac{(\hat{z}_1\hat{z}_2)\hat{z}}{|\hat{z}|^5}\big|
\leq \big[\frac{1}{|\hat{z}|^3} + \frac{1}{|z|^3}\big]|z - \hat{z}| +\big| \frac{ |z|^3(\mathbf{z}_1\mathbf{z}_2)z - |\hat{z}|^3 (\bar{z}_1\bar{z}_2)\hat{z}}{|z|^3|\hat{z}|^3}\big|$.
We~find
$\big| \frac{ |z|^3(\mathbf{z}_1\mathbf{z}_2)z - |\hat{z}|^3 (\bar{z}_1\bar{z}_2)\hat{z}}{|z|^3|\hat{z}|^3}\big|
   \leq \frac{ |(\mathbf{z}_1\mathbf{z}_2)z -  (\bar{z}_1\bar{z}_2)\hat{z}|}{|\hat{z}|^3} + |z - \hat{z}|\big[\frac{1}{|z||\hat{z}|^2} + \frac{1}{|z|^2|\hat{z}|} + \frac{1}{|z|^3}\big].
$
Without loss of generality, we assume that $|z| \leq |\hat{z}|$. One can verify that
$\big| (\mathbf{z}_1\mathbf{z}_2)z - (\bar{z}_1\bar{z}_2)\hat{z}\big|
\leq \frac{|z -\hat{z}|}{2} + \frac{|\hat{z}|}{2} |\mathbf{z} - \bar{z}||\mathbf{z} + \bar{z}|$ and $|\mathbf{z} - \bar{z}| \leq
\frac{2|\hat{z}-z|}{|z|}$.
 It follows $\frac{\big| (\mathbf{z}_1\mathbf{z}_2)z - (\bar{z}_1\bar{z}_2)\hat{z}\big|}{|\hat{z}|^3}\leq \frac{5|z-\hat{z}|}{2|\hat{z}|^3}$. Hence, we have
$
|S_0\tau| \leq |\xi||\tau||z - \hat{z}|\big\{ \frac{32\gamma^6}{(2\gamma+1)^3}
+ \frac{2}{|z|^3}+ \frac{7}{2|\hat{z}|^3} +  \frac{1}{|z||\hat{z}|^2}+ \frac{1}{|z|^2|\hat{z}|}  \big\}
\leq \frac{96\gamma^6}{(2\gamma+1)^3}|z - \hat{z}||\xi||\tau|
$
and therefore, $|R(z, \hat{z}, \xi, \tau)| \leq \frac{220\gamma^6}{(2\gamma+1)^3}|z - \hat{z}||\xi||\tau|$.\\
{($a4$)} If $a \leq \gamma|z|, \gamma|\hat{z}| \leq b$ then $0 \leq t^z_1, t_1^{\hat{z}} \leq \frac{1}{\gamma}$; $-\frac{1}{\gamma} \leq t_2^z , t_2^{\hat{z}} \leq 0$ and $|\phi(z, \xi)|, |\phi(\hat{z}, \xi)|\leq \frac{24\gamma^4|\xi|}{(2\gamma-1)^2}$. By setting $q(z) = \frac{\gamma}{2}t_1^zt_2^z\big[ 4\gamma^3 |z| \big(\gamma|z| - 1\big)- \gamma^2t_1^zt_2^z + 1 \big]
- \big[\gamma^3 |z|^2 - \gamma^2|z| +\frac{1}{2} - \frac{1}{4\gamma}\big]$ we have
$$
R(z, \hat{z}, \xi, \tau) = \bigg\{\big[q(z)\phi(z, \xi) -q(\hat{z})\phi(\hat{z}, \xi)\big] + 6\gamma^5  \bigg[\frac{t_1^zt_2^z\lag z, \xi\rag (z z^T)}{|z|^3} - \frac{t_1^{\hat{z}}t_2^{\hat{z}}\lag \hat{z}, \xi\rag (\hat{z} \hat{z}^T)}{|\hat{z}|^3}\bigg]\bigg\}\tau
$$
and $|q(z)|, |q(\hat{z})| \leq \gamma(1+\frac{1}{2\gamma})( 2+\frac{1}{2\gamma}) + \frac{6\gamma + 5}{4\gamma}$. We now analyze each term.
\begin{align*}
\big| \big[q(z)\phi(z, \xi) -q(\hat{z})\phi(\hat{z}, \xi)\big]\tau\big| \leq \big| q(z)-q(\hat{z})\big|\big|\phi(z, \xi) \big||\tau|
+ |q(\hat{z})|\big|\phi(z, \xi) -\phi(\hat{z}, \xi)\big| |\tau|.
\end{align*}
 Similarly for (a3), we get $\big|\big[\phi(z, \xi) -\phi(\hat{z}, \xi)\big]\tau\big|\leq \frac{220\gamma^6}{(2\gamma-1)^3} |z-\hat{z} ||\xi| |\tau|$. Besides,
\begin{align*}
\big| q(z)-q(\hat{z})\big| &\leq \frac{\gamma}{2}\big|t_1^zt_2^z-t_1^{\hat{z}}t_2^{\hat{z}}\big|\bigg| 4\gamma^3 |z| \big(\gamma|z| - 1\big)- \gamma^2t_1^zt_2^z + 1 \bigg|  \\
+ \frac{\gamma}{2}\big|t_1^{\hat{z}}t_2^{\hat{z}}\big|\bigg[ 4\gamma^4 \big||z|^2  - |\hat{z}|^2\big| &+ \gamma^3 \big||z| - |\hat{z}|\big| +  \gamma^2\big|t_1^zt_2^z-t_1^{\hat{z}}t_2^{\hat{z}}\big| \bigg]
+\gamma^3 \big||z|^2 -|\hat{z}|^2\big|+ \gamma^2\big||z|- |\hat{z}|\big|.
\end{align*}
From $t_1^zt_2^z = \gamma^2|z|^2 - (a+b) |z| + ab$, it follows
$\big|t_1^zt_2^z-t_1^{\hat{z}}t_2^{\hat{z}}\big| \leq \gamma^2\big| |z|^2 -|\hat{z}|^2 \big| + |a+b|\big||z|- |\hat{z}|\big|$. Note that $\big| |z|^2 -|\hat{z}|^2 \big| =\big|(|z|- |\hat{z}|)(|z|+ |\hat{z}|)\big| \leq \frac{2\gamma + 1}{\gamma^2}|z- \hat{z}|$.
Hence, there exists constant $m_1(\gamma)>0$ only dependent on $\gamma$, such that $\big| \big[q(z)\phi(z, \xi)-q(\hat{z})\phi(\hat{z}, \xi)\big]\tau\big| \leq m_1(\gamma) |z-\hat{z}||\xi| |\tau|$.\\
For the second term $\big|\frac{t_1^zt_2^z\lag z, \xi\rag (z z^T)}{|z|^3} - \frac{t_1^{\hat{z}}t_2^{\hat{z}}\lag \hat{z}, \xi\rag (\hat{z} \hat{z}^T)}{|\hat{z}|^3} \big|=:T_2(z, \hat{z}, \xi) $, we have
$$T_2(z, \hat{z}, \xi) \leq \frac{ |t_1^zt_2^z - t_1^{\hat{z}}t_2^{\hat{z}}|\big|\lag z, \xi\rag (z z^T)\big|}{|z|^3} + |t_1^{\hat{z}}t_2^{\hat{z}}|\bigg|\frac{\lag z, \xi\rag (z z^T)}{|z|^3} - \frac{\lag \hat{z}, \xi\rag (\hat{z} \hat{z}^T)}{|\hat{z}|^3}\bigg|.$$
We get again the expressions as in the first term and case ($a3$). Hence, there exists a constant $m_2(\gamma) > 0$ only depending in $\gamma$, such that $|R(z, \hat{z}, \xi, \tau)|\leq m_2(\gamma) |z-\hat{z}||\xi| |\tau|$.\\
($a5$) If $a \leq \gamma|z| \leq b$ and $\gamma|\hat{z}| < a$ then $h''(\hat{z})[\xi][\tau] = 0$ and hence $|R(z, \hat{z}, \xi, \tau)| = |h''(z)[\xi][\tau]|$.
Similarly to cases (a3) and (a4), we have $|\phi(z, \xi)||\tau|\leq \frac{24\gamma^4|\xi||\tau|}{(2\gamma-1)^2}$ and $\frac{\big|\lag z, \xi\rag (z z^T)\tau\big|}{|z|^3}  \leq  |\xi||\tau|$.
From  $|t_1^z|, |t_2^z|\leq \frac{1}{\gamma}$ it follows that $\frac{\gamma}{2}|t_1^zt_2^z|\big| 4\gamma^3 |z| \big(\gamma|z| - 1\big)- \gamma^2t_1^zt_2^z + 1 \big| \leq (\gamma+\frac{3}{2})|t_1^z|$ and $
6\gamma^5  |t_1^zt_2^z|\bigg|\dfrac{\lag z, \xi\rag (z z^T)}{|z|^3}\bigg| \leq 6\gamma^4|t_1^z||\xi|$.\\
Note that $0 \leq \gamma|\hat{z}| \leq a \leq \gamma|z|$, hence $0 \leq t_1^z = \gamma|z| - a \leq \gamma|z| - \gamma|\hat{z}|$
and therefore $|t_1^z| \leq \gamma(|z| - |\hat{z}|) \leq \gamma |z-\hat{z}|$. Besides, $\big|\gamma^3 |z|^2 - \gamma^2|z| +\frac{1}{2} - \frac{1}{4\gamma}\big| = \gamma\big|(\gamma|z| - \frac{1}{2\gamma})(\gamma|z| - 1 + \frac{1}{2\gamma})\big|
= \gamma\big|\gamma|z| - \frac{1}{2\gamma}\big| |t_1^z| \leq \gamma^2|z-\hat{z}|$.
Hence there exists constant $m_3 (\gamma) > 0$ only dependent on $\gamma$ such that $|R(z, \hat{z}, \xi, \tau)|\leq m_3(\gamma) |z-\hat{z}||\xi| |\tau|$.\\
($a6$) If $a \leq \gamma|\hat{z}| \leq b$ and $\gamma|z| > b$ then
\begin{align*}
&R(z, \hat{z}, \xi, \tau) = \big[\phi(z, \xi)-\phi(\hat{z}, \xi)\big]\tau
 +\bigg\{6\gamma^5  t_1^zt_2^z\dfrac{\lag z, \xi\rag (z z^T)}{|z|^3}\\
 &+\frac{\gamma}{2}t_1^zt_2^z\big[ 4\gamma^3 |z| \big(\gamma|z| - 1\big)- \gamma^2t_1^zt_2^z + 1 \big] \phi(z, \xi)
+ \bigg[\gamma^3 |z|^2 - \gamma^2|z| -\frac{1}{2} - \frac{1}{4\gamma}\bigg]\phi(z, \xi)   \bigg\}\tau.
\end{align*}
We proceed as in case ($a4$) and get $\big|\phi(z, \xi)-\phi(\hat{z}, \xi)\big||\tau|\leq m_4(\gamma)|z-\hat{z}||\xi| |\tau|$ for some constant $m_4(\gamma) >0$. For the remaining terms, from $\gamma|\hat{z}| \geq b \geq \gamma |z| \geq a$ it follows $0 \leq |t_2^z| = |\gamma|z| - b| = b - \gamma |z| \leq \gamma|\hat{z}| - \gamma |z| \leq \gamma|\hat{z} -z|$. Besides,
$\gamma^3 |z|^2 - \gamma^2|z| -\frac{1}{2} - \frac{1}{4\gamma} = \gamma \big[\gamma|z| +\frac{1}{2\gamma}\big]\big[\gamma|z| -1 -\frac{1}{2\gamma}\big] = \gamma \big[\gamma|z| +\frac{1}{2\gamma}\big]t_2^z$.
We process similarly in case ($a5$) and have $$|R(z, \hat{z}, \xi, \tau)| \leq m_4(\gamma)|z-\hat{z}||\xi| |\tau| + m_5(\gamma) |t^z_2| |\xi| |\tau|\leq m_6(\gamma)|z-\hat{z}||\xi| |\tau|$$
where $m_4(\gamma), m_5(\gamma), m_6(\gamma)$ are positive constants only dependent on $\gamma$.\\
All other cases can be deduced from the previous ones, by an exchanging the roles of $z$ and $\hat{z}$. It~is~easy~to see that the above result also holds in case $z, \hat{z}, \xi, \tau \in \RR^N\times \RR^N$ ($N \in \IN^*$).
\medskip

\end{proof}

\bibliographystyle{plain}
\bibliography{ReportChungJC_rp}

\begin{thebibliography}{10}

\bibitem{bredies2013spatially}
Kristian Bredies, Yiqiu Dong, and Michael Hinterm{\"u}ller.
\newblock Spatially dependent regularization parameter selection in total
  generalized variation models for image restoration.
\newblock {\em International Journal of Computer Mathematics}, 90(1):109--123,
  2013.

\bibitem{calatroni2015bilevel}
Luca Calatroni, Cao Chung, Juan Carlos De~Los Reyes, Carola-Bibiane
  Sch{\"o}nlieb, and Tuomo Valkonen.
\newblock Bilevel approaches for learning of variational imaging models.
\newblock {\em arXiv preprint arXiv:1505.02120}, 2015.

\bibitem{Juan_Carlos}
Juan~Carlos De~los Reyes.
\newblock Optimization of mixed variational inequalities arising in flow of
  viscoplastic materials.
\newblock {\em Computational Optimization and Applications}, 52:757--784, 2012.

\bibitem{de2015numerical}
Juan~Carlos De~Los~Reyes.
\newblock {\em Numerical PDE-Constrained Optimization}.
\newblock Springer Verlag, 2015.

\bibitem{DelosReyesDhamo}
Juan~Carlos De~los Reyes and Vili Dhamo.
\newblock Error estimates for optimal control problems of a class of
  quasilinear equations arising in variable viscosity fluid flow.
\newblock {\em Numerische Mathematik}, 2015.

\bibitem{Juan_Carlos_Kunisch2}
Juan~Carlos De~los Reyes and Karl Kunisch.
\newblock On some nonlinear optimal control problems with vector-valued affine
  control constraints. in: Optimal control of coupled systems of pde.
\newblock {\em International Series on Numerical Mathematics}, 158:105--122,
  2009.

\bibitem{Juan_Carlos_Kunisch}
Juan~Carlos De~los Reyes and Karl Kunisch.
\newblock Optimal control of partial differential equations with affine control
  constraints.
\newblock {\em Control and Cybernetics}, 38:1217--1250, 2009.

\bibitem{de2016structure}
Juan~Carlos De~los Reyes, C-B Sch{\"o}nlieb, and Tuomo Valkonen.
\newblock The structure of optimal parameters for image restoration problems.
\newblock {\em Journal of Mathematical Analysis and Applications},
  434(1):464--500, 2016.

\bibitem{Juan_Carlos_Carolina}
Juan~Carlos De~los Reyes and Carola-Bibiane Schonlieb.
\newblock Image denoising: learning the noise model via nonsmooth
  {PDE}-constrained optimization.
\newblock {\em Inverse Problems and Imaging}, 7(4):1183 -- 1214, 2013.

\bibitem{dong2011automated}
Yiqiu Dong, Michael Hinterm{\"u}ller, and M~Monserrat Rincon-Camacho.
\newblock Automated regularization parameter selection in multi-scale total
  variation models for image restoration.
\newblock {\em Journal of Mathematical Imaging and Vision}, 40(1):82--104,
  2011.

\bibitem{KFrick_Marnizt2012_2}
K.~Frick, P.~Marnizt, and A.~Munk.
\newblock Shape constrained regularization by statistical multiresolution for
  inverse problems.
\newblock {\em Inverse Problems}, 28(6):065006, 2012.

\bibitem{KFrick_Marnizt2012}
K.~Frick, P.~Marnizt, and A.~Munk.
\newblock Statistical multiresolution dantzig estimation in imaging:
  Fundamental concepts and algorithmic framework.
\newblock {\em Electron. J. Stat.}, 6:231--268, 2012.

\bibitem{KFrick_Marnizt2013}
K.~Frick, P.~Marnizt, and A.~Munk.
\newblock Statistical multiresolution estimation for variational imaging: With
  an application in poisson-biophotonics.
\newblock {\em Journal of Mathematical Imaging and Vision}, 46:370--387, 2013.

\bibitem{Grande2006}
M.~J. Gander.
\newblock Optimized schwarz methods.
\newblock {\em SIAM J. Numer. Anal}, 44(2):1699--1731, 2006.

\bibitem{Gilboa_et_el_2006}
G.~Gilboa, N.~Sochen, and Y.~Y. Zeevi.
\newblock Estimation of optimal {PDE}-based denoising in the snr sense.
\newblock {\em IEEE Transactions on Image Processing}, 15:2269--2280, 2006.

\bibitem{Groeger1989}
K.~Gr{\"o}ger.
\newblock A {$W^{1,p}$}-estimate for solutions to mixed boundary value problems
  for second order elliptic differential equations.
\newblock {\em Math. Ann.}, 283(4):679--687, 1989.

\bibitem{hintermuller2006infeasible}
Michael Hinterm{\"u}ller and Georg Stadler.
\newblock An infeasible primal-dual algorithm for total bounded
  variation--based inf-convolution-type image restoration.
\newblock {\em SIAM Journal on Scientific Computing}, 28(1):1--23, 2006.

\bibitem{KIto_KKunisch}
K.~Ito and Karl Kunisch.
\newblock {\em Lagrange multiplier approach to variational problems and
  applications}.
\newblock Society for Industrial and Applied Mathematics (SIAM), Philadelphia,
  PA, 2008.

\bibitem{Hinter2002}
M.~Hinterm\H{u}ller~K. Ito and K.~Kunisch.
\newblock The primal dual active set strategy as a semi-smooth newton method.
\newblock {\em SIAM Journal on Optimization}, 13:865--888, 2003.

\bibitem{kunischpock}
Karl Kunisch and Thomas Pock.
\newblock A bilevel optimization approach for parameter learning in variational
  models.
\newblock {\em SIAM Journal on Imaging Sciences}, 6(2):938--983, 2013.

\bibitem{lauzier2012non}
Pascal~Th{\'e}riault Lauzier, Jie Tang, and Guang-Hong Chen.
\newblock Non-uniform noise spatial distribution in ct myocardial perfusion and
  a potential solution: statistical image reconstruction.
\newblock In {\em SPIE Medical Imaging}, pages 831338--831338. International
  Society for Optics and Photonics, 2012.

\bibitem{lauzier2012noise}
Pascal~Th{\'e}riault Lauzier, Jie Tang, Michael~A Speidel, and Guang-Hong Chen.
\newblock Noise spatial nonuniformity and the impact of statistical image
  reconstruction in ct myocardial perfusion imaging.
\newblock {\em Medical physics}, 39(7):4079--4092, 2012.

\bibitem{Muravleva2008}
E.~A. Muravleva and M.~A. Olshanskii.
\newblock Two finite-difference schemes for calculation of bingham fluid flows
  in a cavity.
\newblock {\em Russ. J. Numer. Anal. Math. Model.}, 23(6):615--634, 2008.

\bibitem{ochs2015bilevel}
Peter Ochs, Ren{\'e} Ranftl, Thomas Brox, and Thomas Pock.
\newblock Bilevel optimization with nonsmooth lower level problems.
\newblock In {\em International Conference on Scale Space and Variational
  Methods in Computer Vision}, pages 654--665. Springer, 2015.

\bibitem{E_Okyere}
Eric Okyere.
\newblock {\em Optimized Schwarz Methods for Elliptic Optimal Control
  Problems}.
\newblock Erlangung des akademischen Grades, 2009.

\bibitem{Quarteroni_Valli}
A.~Quarteroni and A.~Valli.
\newblock {\em Domain Decomposition Methods for Partial Diferential Equations}.
\newblock Numerical Mathematics and Scientific Computation, Oxford Science
  Publications, Oxford, second edition, 1999.

\bibitem{DStrong_et_el_2006}
D.~Strong, J.$-$F. Aujol, and T.~Chan.
\newblock Scale recognition, regularization parameter selection, and {M}eyers
  {G}$-$norm in total variation regularization.
\newblock {\em SIAM Journal on Multiscale Modeling and Simulation}, 5:273--303,
  2006.

\bibitem{tadmor2004multiscale}
Eitan Tadmor, Suzanne Nezzar, and Luminita Vese.
\newblock A multiscale image representation using hierarchical (bv, l 2)
  decompositions.
\newblock {\em Multiscale Modeling \& Simulation}, 2(4):554--579, 2004.

\bibitem{Tro87}
G.~M. Troianiello.
\newblock {\em Elliptic differential equations and obstacle problems}.
\newblock The University Series in Mathematics. Plenum Press, New York, 1987.

\bibitem{Vogel_C_R}
C.~R. Vogel.
\newblock {\em Computational Methods for Inverse Problems}.
\newblock SIAM, vol. 10, 2002.

\bibitem{WangBokvik}
Wang Z., Bovik A., Sheikh H.R, and Simoncelli. E.P.
\newblock Image quality assessment: {F}rom error visibility to structural
  similarity.
\newblock {\em IEEE Transactions on Image Processing}, 13:600--612, 2004.

\end{thebibliography}
\end{document}